\newtheorem{theorem}{Theorem}[section]
\newtheorem{proposition}[theorem]{Proposition}
\newtheorem{corollary}[theorem]{Corollary}
\theoremstyle{definition}
\newtheorem{definition}[theorem]{Definition}
\theoremstyle{remark}
\newtheorem{remark}[theorem]{Remark}
\newtheorem*{remarks*}{Remarks}
\numberwithin{equation}{section}
\newcommand{\CC}{{\mathbb C}}
\newcommand{\ZZ}{{\mathbb Z}}
\newcommand{\PP}{{\mathbb P}}
\newcommand{\NN}{{\mathbb N}}
\newcommand{\BB}{{\mathbb B}}
\begin{document}
\openup3pt

\title[Limit currents]{Limit currents and value distribution of holomorphic maps}

\author{Daniel Burns}\address{Department of Mathematics \\
University of Michigan \\ Ann Arbor \\ MI \\ 48109 \\ USA}
\thanks{DB was supported in part by NSF grants DMS-0514070 and DMS-0805877. DB thanks the Universit\'e de Paris-Sud and the I.H.E.S. for support during the preparation of this paper.}
\email{dburns@umich.edu}

\author{Nessim Sibony}\address{D\'epartement Math\'ematiques \\Universit\'e Paris-Sud UMR 8628 \\91405 Orsay, France}
\email{nessim.sibony@math.u-psud.fr}
\date{}

\begin{abstract}
We construct $d$-closed and $dd^c$-closed positive currents associated to a holomorphic map $\phi$ via cluster points of normalized weighted truncated image currents. They are constructed using analogues of the Ahlfors length-area inequality in higher dimensions. Such classes of currents are also referred to as Ahlfors currents. We give some applications to equidistribution problems in value distribution theory.
\end{abstract}

\maketitle


\section{Introduction}
\label{sec:1}
Let $X$ be a complex manifold of dimension $k$, and $(Y, \omega)$ a compact k\"ahler manifold of dim $m \geq k$. We consider a non-degenerate holomorphic map $\phi: X \rightarrow Y$. We are interested in the distribution of pre-images of subvarieties of $Y$ under $\phi$. When $k = m = 1$, the theory is very well-developed, see, for example, \cite{wh}. In higher dimensions many questions remain open, but {\em cf.}, Griffiths \cite{pag}, Shabat \cite{bs}. 

We first construct some positive $d$-closed or $dd^c$-closed currents associated to $\phi$. When $X = \CC$, Ahlfors's length-area estimate implies that for appropriate subsequences $r_n \rightarrow +\infty$ the currents $\phi_*[D_{r_n}]/c_{r_n}$ cluster at positive closed currents of bidimension (1,1). Here $D_r$ is the disk of radius $r, [D_r]$ is the current of integration over this disk, and $c_r$ is a normalizing factor to guarantee mass 1. Such currents are useful in dynamics \cite{ds2}, \cite{ns} and value distribution theory \cite{mm}, for example. The present paper centers around extensions of Ahlfors' idea to higher dimensions, especially when $X$ is parabolic, or a bounded domain.

Let $\tau$ be a plurisubharmonic (p.s.h.) exhaustion function on $X$,
$$\tau: X \rightarrow [0, R), R > 0,$$
where $R$ could be finite. Recall that a manifold $X$ is parabolic if it admits a p.s.h. exhaustion function $\tau$ with $R = +\infty$, and such that $(dd^c \tau)^k$ vanishes outside a compact set. An example is $X = \CC^k, \tau = \log \| z \|$ outside a compact set. Riemann surfaces are parabolic if and only if they do not admit non-constant, bounded, subharmonic functions.

We consider positive currents $S_r = S_{j, r}$ of bidimension $(j, j)$ on $Y$ defined by
\begin{equation}
	\label{eqn:Stdef}
		<S_r, \varphi> = \frac1{c_r} \int_X u_r \, (dd^c \tau)^{k-j} \wedge \phi^*(\varphi)
\end{equation}
where $\varphi$ is a test form of bidegree $(j, j)$, and where we may take $u_r = (1-\frac{\tau}{r})^+$, a plurisuperharmonic function on $B_r := \{ z \in X \; | \; \tau(z) < r\}$ for all $r$. The constant $c_r = \int_X u_r (dd^c\tau)^{k-j}\phi^*(\omega^j)$ is a normalizing constant to have $S_r/c_r$ of  mass 1. When $X$ is parabolic of dimension $k$, for example, we show that for $j = 1$, the currents $S_r/c_r$ have at least one positive $d$-closed current among their cluster points.
\begin{definition} 
\label{def:tkdef}
Define the unaveraged characteristic function
\begin{equation}	
	\label{eqn:tkdef}
		t_{j}(r) = \int_{B_r} (dd^c \tau)^{k-j} \wedge \phi^*(\omega^{j}).
\end{equation}
\noindent The averaged characteristic function, or simply characteristic function, $T_j(r)$ is defined as
\begin{equation}
	\label{eqn:Tkdef}
		T_{j}(r) = \int_0^r \frac1s t_{j}(s) ds.
\end{equation}
\end{definition}
These characteristic functions are modeled on those of Nevannlina and later, in higher dimensions, of Chern, for example \cite{ssc}. 

Define the $d$-{\em mass ratio} $I_{j}(r)$ of degree $j$ as follows: 
\begin{equation} 
	\label{eqn:LAR}
		I_{j}(r) = \frac{\left(\int_0^r t_{j-1}(s) \, ds\right) \cdot t_{j}(r)}{(\int_0^r t_{j}(s) ds)^2}.
\end{equation}
\noindent See (\ref{eqn:defI}) and (\ref{eqn:Ibis}) for the origin and derivation of (\ref{eqn:LAR}).

We show, for arbitrary p.s.h. exhaustion $\tau$, that if $\lim_{r_{\ell} \rightarrow +\infty} I_{j}(r_{\ell}) = 0$, then all cluster points of the currents $S_{r_{\ell}}/c_{r_{\ell}}$ of bidimension $(j, j)$ are positive {\em closed} currents, cf. Theorem \ref{thm:1stdlimthm}. 

The question of the existence of closed currents for images of $\CC^k$ has been explored recently by de Th\'elin \cite{hdt}, where limit currents similar to those described above are called {\em Ahlfors currents}. The difference between \cite{hdt} and here is that we weight the integral in the definition of $S_r$ with the factor $u_r$, which is smoother than the characteristic function of $B_r$ used in \cite{hdt}. The condition guaranteeing the existence of closed limit currents seems more tractable than that of \cite{hdt} since it involves only the relative growth of $t_{j}$ and $t_{j-1}$, and not their derivatives, although \emph{cf.} Theorem 2 of \cite{hdt} on this point, and compare it to Theorem \ref{thm:egcase} below. Note that Theorem \ref{thm:egcase} produces $dd^c$-closed currents. Only the maximal dimensional case $j = k$, i.e., $S_r$ of bidimension $(k,k)$ is examined in \cite{hdt}, and only the case of $d$-closed limits. It turns out that for questions of value distribution, it can often be just as useful to find cluster points which are $dd^c$-closed, a situation to which we turn next. 

Assume $\tau = \log \sigma$, and redefine  $B_r = \{ \sigma < r\}$, and the current $S_r = S_{j,r}$ of bidimension $(j, j)$ in the $dd^c$-case as
\begin{equation}
	\label{eqn:Srdefddc}
	S_r(\psi) = \int_{B_r} \log^+ \frac{r}{\sigma} \, (dd^c \tau)^{k-j} \wedge \phi^*(\psi)
\end{equation}
for $\psi$ of bidegree $(j, j)$, with normalizing constant $c_r = S_r(\omega^j).$ 

Define the $dd^c$-{\em mass ratio} $J_{j}(r)$ of degree $j$ by
\begin{equation}
	\label{eqn:ddclar}
		J_{j}(r) = \frac{t_{j-1}(r)}{T_j(r)},
\end{equation}
where the denominator is the classical characteristic function (\ref{eqn:Tkdef}). Our main result is Theorem \ref{thm:ddc-closed}, which gives the following result.
\vskip 2mm
\noindent {\bf Main Theorem \ref{thm:ddc-closed}}
\emph{If $J_{j}(r_{\ell}) \rightarrow 0,$ then all cluster points of $S_{r_{\ell}}/c_{r_{\ell}}$ are $dd^c$-closed. Moreover, $\left<dd^c S_{r_{\ell}}/c_{r_{\ell}}, \psi\right> \rightarrow 0$ when $\psi$ is a bounded form.}
\vskip 2mm
\noindent In particular, we find conditions which ensure that there is a $dd^c$-closed current associated with a holomorphic map $\phi: \BB^k(1) \rightarrow Y$, with $\BB^k(1)$ the unit ball in $\CC^k$. A consequence of these conditions is the following Brody type result. 
\vskip 2mm
\noindent {\bf Theorem \ref{thm:Brody}} \; \emph{Let $\phi_n: \BB^k(1) \rightarrow Y$ be a sequence of holomorphic maps. Then either the graphs of the $\phi_n$ form a normal family of analytic sets, or there is a $j, 1 \leq j \leq k,$ and sequences $r_{\ell} \rightarrow R^-, n_{\ell} \rightarrow \infty$ such that $S_{j, r_{\ell}}/c_{j,r_{\ell}} = S_{\phi_{n_{\ell}}, j, r_{\ell}}/c_{n_{\ell},j,r_{\ell}}$ converges to a $dd^c$-closed current.}
\vskip 2mm

These results lead to several consequences in value distribution theory, and we record just one here, describing the value distribution of {\em points}. 
\vskip 2mm
\noindent {\bf Theorem \ref{thm:defvit0}} \emph{Let $\phi:\CC^k \rightarrow \PP^k$ be a non-degenerate holomorphic map. Assume that
\begin{equation}
	\label{equ:LARddck-1}
		\liminf_{r \rightarrow +\infty} \frac{t_{k-1}(r)}{T_k(r)} = 0.
\end{equation}
Then there exists a ``small" exceptional set $\mathcal{E}$ such that for $a \notin \mathcal{E}$, then 
\begin{equation}
	\label{eqn:vdpts}
		\limsup_{r \rightarrow +\infty} \frac{N(a, r)}{T_k(r)} = +1.
\end{equation}
In particular, the (2n-2+$\delta$)-dimensional Hausdorff measure of $\mathcal{E}$ is 0, for any $\delta > 0$}.
\vskip 2mm
Here $N(a, r)$ is the classical logarithmic average of the number of preimages of $a$ in the $\tau$-ball of radius $r$ ({\em c.f.} (\ref{eqn:NDa})), and $T_k(r) = \int_0^r t_k(s) \, \frac{ds}{s},$ the appropriate characteristic function for this dimension. The smallness of $\mathcal{E}$ is measured by a capacity, for which $\mathcal{E}$ is of capacity 0. In fact, we get for every codimension $j$ an exceptional set $\mathcal{E}_j$ of ``zero $j$-capacity" such that outside of $\mathcal{E}_j$ one has defect zero, in the sense of a dimension-appropriate case of a result similar to (\ref{eqn:vdpts}), provided that the appropriate $J_j(r)$ has $\liminf J_j(r) = 0.$ It seems that in previous work (see Shabat \cite{bs}, Griffiths-King \cite{gk}), the claim is that ``most" points are covered without a quantitative measure of the size of the defect locus. For analytic sets there are earlier results in this direction for the average growth of a hyperplane section, see Gruman \cite{lg}, Molzon-Shiffman-Sibony \cite{mss}.

Theorem \ref{thm:defvit0} and other results in section \ref{sec:appsvd} are sharper than stated here, since we give estimates for the rates of convergence. For these the second half of Theorem \ref{thm:ddc-closed} is crucial, and gives a formulation of the proximity term in the First Main Theorem of value distribution in our context, and an estimation in terms of mass ratios.

\vskip 3mm

Here is an outline of the paper. In section \ref{sec:d-closed} we estimate $\left<dS_r,\psi\right>/c_r$, and arrive at the $d$-mass ratios of degree $j$ as a useful bound. The rest of the section is devoted to estimating these mass ratios in concrete cases. The situation is especially clear when the domain $X$ of $\phi$ is parabolic, and when $X$ is furthermore of dimension one, our results are complete. 

Section \ref{sec:ddc-closed} is very analogous to section \ref{sec:d-closed}, but for $dd^c$-closed cluster currents. Of particular interest is the precise estimate in THeorem \ref{thm:ddc-closed},
$$|\left<dd^c S_{j,r}, \psi\right>/c_r| \leq C \|\psi\|_{\infty} \frac{t_{j-1}(r)}{T_j(r)},$$
valid for all bounded test forms $\psi$ of bidegree $(j,j)$, which is central in much of what follows, especially in section \ref{sec:appsvd}. The section closes with a theorem on the positivity of intersection of the cluster currents constructed in bidimension $(1,1)$ with analytic hypersurfaces which meet the image of $\phi$ non-trivially. This generalizes a result of McQuillan's for $X = \CC$ or a finite branched cover of $\CC$. Section \ref{sec:scaling} studies the effect of scaling on the estimates we use on $dd^cS_r$. In particular, because we can estimate $dd^cS_{j,r}$ for all

intermediate $j$, and not just $j = k$, we arrive at a ``multichotomy": either one of the $j, 1 \leq j \leq k$ gives rise to a positive, $dd^c$-closed limit current of $S_{j,r}/c_{j,r}$ or we get an estimate on the volume of the graph of $\phi$. This follows from the inductive structure of the various $dd^c$-mass ratios, and their relation to the mixed volumes calculation of the volumes of graphs in $X \times Y$. The Brody-type result described above follows. Section \ref{sec:appsvd} deals with the value distribution applications, and includes one corollary about the behavior of leaves of singular holomorphic foliations of $\PP^m$. Section \ref{sec:hdims} examines the size of the set of limit currents constructed here using results in complex dynamics. The result is a kind of higher dimensional equidistribution according as a limit current is unique. The final section \ref{sec:growth} relates the mass ratio conditions which this article is based on to a couple of examples of classical order of growth conditions, such as finite order,  on maps $\phi$.

\vskip 2mm

\begin{remark}
	\label{rmk:flex}
	In what follows, we will have considerable flexibility in how we construct the limit currents. There are at least two forms of growth measurements one might use, depending on whether one uses averaged or unaveraged characteristic functions. The averaged functions arise when one averages out the currents $S_r$ via
$$\tilde{S}_r(\psi) = \int_0^r \frac{ds}{s} \int_{B_s} u_s (dd^c \tau)^{k-j} \wedge \phi^*(\psi),$$
for test forms $\psi$ of bidegree $(j,j)$, where the only difference between the $d$-case and the $dd^c$-case is in the choice of $u_r$ as above. In practice, there are only minor technical differences in these cases, and we content ourselves with mentioning the averaged currents here and in remark \ref{rmk:int} below. The differences in arguments between the $d$-closed limits and the $dd^c$-closed limits are more substantial, and we carry out more or less parallel arguments in these two cases in sections \ref{sec:d-closed}  and \ref{sec:ddc-closed}, respectively. The $dd^c$ case requires a regularization of $u_r$.
\end{remark}



\section{First limits: $d$-closed currents}
\label{sec:d-closed}

Let $X$ be a complex manifold of dimension $k$, and $(Y, \omega)$ a compact K\"ahler manifold of dimension $m \geq k$. We assume $X, Y$ conected. Let $\phi: X \rightarrow Y$ be a non-degenerate holomorphic map, \emph{i.e.}, the rank of $d\phi(x_0) = k$ at some $x_0 \in X$. Let $\tau: X \rightarrow [0, R), 0 < R \leq +\infty$ be a smooth plurisubharmonic exhaustion function. Set $B_r = \{x \mid \tau(x) \leq r\}$, which is compact for $r < R$. For convenience, we will usually assume that 
\begin{equation}
	\label{eqn:convention}
	\tau \geq r_0 > 0.
\end{equation}

Let $u_r$ be a family of continuous positive plurisuperharmonic functions on $B_r$, $r \in [0, R)$. We consider the family of positive currents of bidimension $(j,j)$ on $Y$ defined by
   \begin{equation}
   \label{eqn:defT}
  S_r(\psi) = S_{j,r}(\psi) =  \int_{B_r} u_r (dd^c \tau)^{k-j} \wedge \phi^{*}(\psi),
    \end{equation}
where $\psi$ is a smooth test form of bidegree $(j,j)$ on $Y$, and set $c_r = c_{j,r} = S_{j,r}(\omega^j)$. We will study the cluster points of the family of normalized positive currents $S_r(\cdot)/c_r$ of mass 1.
Different choices of $u_r$ will prove useful in what follows. In this section we consider cases where the proper choice of $u_r$, and suitable conditions on $\phi, \tau, \omega$,  lead to $d$-closed currents as cluster points of the normalized $S_r$'s.

In particular, we will work mainly in this section with $u_r := (1 - \frac{\tau}{r})^+ = \chi(v_r),$ where $v_r = 1 - \frac{\tau}{r},$ and $\chi = \max(t, 0).$ We want to find conditions which guarantee that $dS_{r_{\ell}}/c_{r_{\ell}} \rightarrow 0,$ for suitable sequences $r_{\ell} \rightarrow R$. For this it is enough to estimate $dS_r$ on test forms of the type $\psi = \theta \wedge \beta^{k-1},$ with $\theta$ a (1,0)-form and $\beta$ an arbitrary k\"ahler form. This is because we can first assume $\psi$ has components only in bidegrees $(j,j-1)$ and $(j-1,j)$, and is real, and because secondly any such $\psi$ can can be written as a finite sum (with an {\em a priori} bounded number of terms),
\begin{equation}
	\label{eqn:decomp}
	\psi = \sum_{\nu = 1}^N \theta_{\nu} \wedge \beta_{\nu}^{j-1} + \sum_{\nu = 1}^N \overline{\theta}_{\nu} \wedge \beta_{\nu}^{j-1},
\end{equation}
where $\theta_{\nu}, \beta_{\nu}$ are as claimed. We note that this can be done  in such a way that 
\begin{equation}
	\label{eqn:decompbounds}
	\begin{array}{l}
	\frac{i}{2} \theta_{\nu} \wedge \overline{\theta}_{\nu} \leq C \, \|\psi\|_{\infty}^2 \;  \omega, \; \text{and} \\
		\\
	0 \leq \beta_{\nu} \leq \omega, \nu = 1,\ldots, N.
	\end{array}
\end{equation}

	By the Schwarz inequality, we get
   \begin{equation}
  	 \label{eqn:SchwS}
  		 \aligned
 		 \left|  \left<dS_r, \psi \right> \right|  & = \, \left| \int_X \chi'(v_r) \, dv_r \wedge (dd^c \tau)^{k-j} \wedge  \phi^{*}(\theta) \wedge \phi^{*}(\beta^{j-1}) \right| \\
  		& \leq \left( \int_{B_r} \chi'(v_r) , dv_r \wedge d^cv_r \wedge (dd^c \tau)^{k-j} \wedge \phi^{*}(\beta^{j-1})\right)^{\frac12} \\
  		& \times \left( \int_{B_r} \chi'(v_r) \phi^{*}(\theta) \wedge \phi^{*}(\bar{\theta})\wedge (dd^c \tau)^{k-j} \wedge \phi^{*}(\beta^{j-1}) \right)^{\frac12}.
  		\endaligned
   \end{equation}
It follows that 
  \begin{equation}
  \label{eqn:estdS}
  \aligned
  \left| \left<dS_r, \psi \right>\right| 
  & \leq \, C \; \|\psi\|_{\infty} \left(\int_{B_r} dv_r \wedge d^cv _r \wedge (dd^c \tau)^{k-j} \wedge \phi^{*}(\omega^{j-1})\right)^{\frac12} \\
  & \times \; \left(\int_{B_r} (dd^c\tau)^{k-j} \wedge \phi^{*}(\omega^j)\right)^{\frac12}.
  \endaligned
  \end{equation}
Hence, one has
  \begin{equation}
  	\label{eqn:massratios}
  	\begin{array}{rcl}
	\left|\frac{\left<dS_r, \psi \right>}{c_r}\right|^2 & \leq & C \, \|\psi\|^2_{\infty} \\ 
		&	&	\\
		&	& \times \;\; \frac{(\int_{B_r} dv_r \wedge d^cv_r \wedge (dd^c \tau)^{k-j} \wedge \phi^{*}(\omega^{j-1}))(\int_{B_r} (dd^c \tau)^{k-j} \wedge \phi^{*}(\omega^j))}{(\int_{B_t} u_t \, (dd^c \tau)^{k-j} \wedge \phi^{*}(\omega^j))^2}.
	\end{array}
  \end{equation}
\remark
\label{ref:sings}
With small technical modifications, we can allow $X$ to be a singular analytic space.

We formalize this condition. First set ${I}_j(r)$ equal to (the essential part of) the right hand side of (\ref{eqn:massratios}), that is,
\begin{equation}
	\label{eqn:defI}
	I_j(r) = \frac{(\int_{B_r} dv_r \wedge d^cv_r \wedge (dd^c \tau)^{k-j} \wedge \phi^{*}(\omega^{j-1}))(\int_{B_r} (dd^c \tau)^{k-j} \wedge \phi^{*}(\omega^j))}{(\int_{B_r} u_r \, (dd^c \tau)^{k-j} \wedge \phi^{*}(\omega^j))^2}.
\end{equation}
We have proved the following basic theorem.
\begin{theorem}
	\label{thm:1stdlimthm}
		If the exists a sequence $r_{\ell} \rightarrow \infty$ such that $I_j(r_{\ell}) \rightarrow 0$, then any limit current of $S_{r_{\ell}}/c_{r_{\ell}}$ 
		is a closed and positive current of mass 1. Moreover, $\lim_{r_{\ell} \rightarrow \infty} \frac1{c_{r_{\ell}}} \left<dS_{r_{\ell}}, \psi\right> = 0,$ for any bounded test form $\psi$ of degree $2j-1$.
\end{theorem}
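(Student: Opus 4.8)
The plan is to read off everything from the chain of inequalities \eqref{eqn:SchwS}--\eqref{eqn:massratios} already established. The key structural point is that an arbitrary bounded test form $\psi$ of degree $2j-1$ on $Y$ can, after discarding the components in bidegrees other than $(j,j-1)$ and $(j-1,j)$ (which are killed by $\phi^*$ since $\phi^*(\psi)$ must be paired against the $(k,k)$-form $u_r (dd^c\tau)^{k-j}\wedge\phi^*(\psi)$ on the $k$-dimensional $X$) and after passing to the real part, be written as the finite sum \eqref{eqn:decomp} with the bounds \eqref{eqn:decompbounds}. Since the number of terms $N$ is bounded independently of $\psi$, it suffices to prove $\langle dS_{r_\ell},\theta\wedge\beta^{j-1}\rangle/c_{r_\ell}\to 0$ for each single summand, with the constant absorbing $N$. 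First I would therefore fix such a summand and apply the Cauchy--Schwarz estimate \eqref{eqn:SchwS}, then bound the two factors using \eqref{eqn:decompbounds} to arrive at \eqref{eqn:estdS}, and divide by $c_r$ to obtain exactly $|\langle dS_r,\psi\rangle/c_r|^2 \le C\|\psi\|_\infty^2\, I_j(r)$ as in \eqref{eqn:massratios}, with $I_j(r)$ defined by \eqref{eqn:defI}. The hypothesis $I_j(r_\ell)\to 0$ then gives $\langle dS_{r_\ell},\psi\rangle/c_{r_\ell}\to 0$ for every bounded $\psi$ of degree $2j-1$, which is the second assertion.

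For the first assertion, recall that the normalized currents $S_{r_\ell}/c_{r_\ell}$ are positive of mass $1$ on the compact $Y$, hence form a relatively compact family in the weak topology; let $T$ be any cluster point, say $S_{r_{\ell_i}}/c_{r_{\ell_i}}\to T$. Positivity and mass $1$ pass to the limit, so $T$ is positive of mass $1$. To see $dT=0$, note that for a fixed smooth test form $\psi$ of degree $2j-1$ we have $\langle dT,\psi\rangle = \langle T,d\psi\rangle = \lim_i \langle S_{r_{\ell_i}}/c_{r_{\ell_i}}, d\psi\rangle = \lim_i \langle dS_{r_{\ell_i}},\psi\rangle/c_{r_{\ell_i}} = 0$ by the second assertion applied with the (bounded) form $\psi$. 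Since $\psi$ was arbitrary, $dT=0$.

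The only genuine subtlety — and the step I would be most careful about — is the reduction of a general bounded $(j,j-1)+(j-1,j)$ form to the normalized decomposition \eqref{eqn:decomp}--\eqref{eqn:decompbounds} with a uniformly bounded number of terms and with the $L^\infty$-control of $\theta_\nu,\beta_\nu$ by $\|\psi\|_\infty$; this is a pointwise linear-algebra fact (writing a primitive $(j,j-1)$ covector as a bounded sum of decomposable ones of the form $\theta\wedge\beta^{j-1}$), but one must check that the bounds can be taken uniform over $Y$ by compactness, and that the estimate is unaffected by the fact that $\psi$ is only bounded, not smooth — here one uses that $dS_r$ is, for each fixed $r<R$, a current of order $0$ (being $\pm$ the boundary of the compactly supported current $u_r(dd^c\tau)^{k-j}\wedge\phi^*(\cdot)$ after an integration by parts, all integrands being locally integrable), so pairing against bounded forms makes sense and the density argument or direct estimate goes through. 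Everything else is a direct quotation of \eqref{eqn:SchwS}--\eqref{eqn:defI} together with elementary weak-compactness.
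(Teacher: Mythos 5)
Your proposal is correct and follows essentially the same route as the paper: the paper's proof of Theorem \ref{thm:1stdlimthm} is precisely the derivation (\ref{eqn:decomp})--(\ref{eqn:massratios}) culminating in $|\langle dS_r,\psi\rangle/c_r|^2 \le C\|\psi\|_\infty^2\, I_j(r)$, after which the closedness of cluster points follows by the standard weak-compactness and duality argument you give. The extra care you take with the decomposable reduction and the order-zero remark are sensible but do not change the argument.
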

We are thus led to study the ratios $I(r) = I_j(r)$ of (\ref{eqn:defI}).
Let us introduce characteristic functions appropriate to all dimensions as in (\ref{eqn:tkdef}) and (\ref{eqn:Tkdef}) above. Similar notions have been used in the holomorphic dynamics literature under the name of dynamical degrees: when $f$ is a meromorphic self-map of a compact K\"ahler manifold $Y$ of dimension $k$, then the $j$-th dynamical degree $\lambda_j$ is defined as
\begin{equation}
	\label{eqn:dddef}
	 \lambda_j := \lim_{n\rightarrow \infty} \, (\int_Y \omega^{k-j} \wedge (f^n)^*(\omega^j))^{1/n},
\end{equation}
see, for example, \cite{ds2} for references.
\begin{definition}
	\label{def:DD}
	For $0 \leq j \leq k$, set $t_j(r) = \int_{B_r} (dd^c\tau)^{k-j} \wedge \phi^{*}(\omega^j).$ 
\end{definition}
 \noindent We express the components of the $I_j(t)$'s in terms of these $t_j$'s. We write out the case of $j = k$ only; the others are similar. First 
  \begin{equation}
 	 \label{eqn:lambints}
 		 \aligned
 			 \int_{B_r} dv_r \wedge d^c v_r \wedge \phi^{*}(\omega^{k-1}) 
 				 & = \frac{1}{r^2} \, \left( \int_{\partial B_r} \tau \, d^c\tau \wedge \phi^{*}(\omega^{k-1})- \int_{B_r} \tau \, dd^c\tau \wedge \phi^{*}(\omega^{k-1}) \right) \\ 
 				 & = \frac{1}{r^2} \left(r \int_{B_r} dd^c\tau \wedge \phi^{*}(\omega^{k-1})  - \int_{B_r} \tau \, dd^c\tau \wedge \phi^{*}(\omega^{k-1}) \right) \\
 				 & = \frac{1}{r^2} \, \int_{B_r} (r-\tau) \, dd^c\tau \wedge \phi^{*}(\omega^{k-1}) \\
 				 & =  \frac{1}{r} \int_{B_r} (1-\frac{\tau}{r}) \, dd^c\tau \wedge \phi^{*}(\omega^{k-1}) \\
				  & = \frac1r \int_{B_r} (\int_0^{1-\frac{\tau}{r}} ds) \, dd^c\tau \wedge \phi^{*}(\omega^{k-1}) \\
				  & = \frac1r \int_0^1 ds \int_{B_{r(1-s)}} dd^c\tau \wedge \phi^{*}(\omega^{k-1}) \\
				  & = \frac{1}{r^2} \int_0^r ds \, \int_{B_s} dd^c \tau \wedge \phi^{*} (\omega^{k-1}) \\
				  & = \frac1{r^2} \int_0^r t_{k-1}(s) \; ds
		  \endaligned
 \end{equation}
Similarly, 
$$\int_{B_r} u_r  \, \phi^{*}(\omega^k) = \frac{1}{r} \int_0^r ds \int_{B_s} \phi^{*}(\omega^k) = \frac{1}{r} \int_0^r t_k(s) \, ds.$$
Thus we can re-express $I(r)$ as
\begin{equation}
	\label{eqn:Ibis}
	I(r) = \frac{(\int_0^r t_{k-1}(s) \, ds) \; t_k(r)}{(\int_0^r t_k(s) \,ds)^2}.
\end{equation}
With (\ref{eqn:Ibis}) in hand, we can express relatively natural conditions on the growth or decay of ratios of volumes, similar in spirit to the original Ahlfors conditions, which guarantee that $I(r_{\ell}) \rightarrow 0$ along some suitable sequences $r_{\ell} \rightarrow \infty$. For convenience, set $c_{r_{\ell}} = c_{\ell}$ below.
\begin{theorem}
	\label{thm:dclosedMT}
	Let $\phi, X, Y, \tau$ be as above.
	\vskip 2mm
	\noindent 1. Assume $R = \infty$, and that 
	\begin{equation}
		\label{eqn:simpledMR}
		\lim \frac{t_{j-1}(r)}{\int_0^r t_j(s) \, ds} = 0.
	\end{equation}
	Then there is a sequence $r_{\ell} \rightarrow \infty$ such that $S_{r_{\ell}}/c_{\ell}$ converges to a positive closed current. Moreover, $\left<dS_{r_{\ell}}/c_{\ell}, \psi \right>  \rightarrow 0$, for any bounded test form $\psi$.
	\vskip 2mm
	\noindent 2. Assume $R = \infty$, and let $\alpha = \alpha(s)$ be a continuous function such that $\int_0^{\infty} \frac{ds}{\alpha(s)} = \infty$. Assume further that
	\begin{equation}
		\label{eqn:alphaMR}
		\liminf \; \frac{\alpha(\int_0^r t_j(s) \, ds) \cdot \int_0^r t_{j-1}(s) ds }{(\int_0^r t_j(s) \, ds)^2} = 0.
	\end{equation}
	Then there is a sequence $r_{\ell} \rightarrow \infty$ such that $S_{r_{\ell}}/c_{\ell}$ converges to a positive closed current. Moreover, $\left<dS_{r_{\ell}}/c_{\ell}, \psi \right>  \rightarrow 0$, for any bounded test form $\psi$.
	\vskip 2mm
	\noindent 3. Assume $R < \infty$, and that
	\begin{equation}
		\label{eqn:minimaldMR}
			\int_{r_0}^R \frac{dr}{\int_{r_0}^r t_{j-1}(s) \, ds} = \infty.
	\end{equation}
	Then there is a sequence $r_{\ell} \rightarrow \infty$ such that $S_{r_{\ell}}/c_{\ell}$ converges to a positive closed current. Moreover, $\left<dS_{r_{\ell}}/c_{\ell}, \psi \right>  \rightarrow 0$, for any bounded test form $\psi$.
\end{theorem}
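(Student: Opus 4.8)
By the Schwarz–inequality estimate \eqref{eqn:massratios}, its clean reformulation \eqref{eqn:Ibis} (and its evident analogue for general $j$), and Theorem~\ref{thm:1stdlimthm}, it suffices in each case to produce a sequence $r_\ell \to R$ along which the mass ratio $I_j(r_\ell) \to 0$. Given such a sequence, a subsequence of $S_{r_\ell}/c_\ell$ converges by weak compactness of mass-$1$ positive currents on the compact manifold $Y$, its limit is positive and closed, and the ``Moreover'' clause of Theorem~\ref{thm:1stdlimthm} gives $\langle dS_{r_\ell}/c_\ell,\psi\rangle \to 0$ for every bounded test form $\psi$. Since $I_j \ge 0$, producing such a sequence is exactly the assertion $\liminf_{r\to R} I_j(r) = 0$. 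Throughout write $a_i(r) = \int_0^r t_i(s)\,ds$, so $a_i' = t_i$ and, by \eqref{eqn:Ibis},
\[
  I_j(r) \;=\; \frac{a_{j-1}(r)\,a_j'(r)}{a_j(r)^2}.
\]
Non-degeneracy of $\phi$ gives $t_j(r) > 0$, hence $a_j(r) > 0$, for all large $r$, and we work on that range.

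\textbf{Part 1.} The key is the identity obtained by differentiating the quotient $a_{j-1}/a_j$:
\[
  I_j(r) \;=\; \frac{t_{j-1}(r)}{a_j(r)} \;-\; \frac{d}{dr}\!\left(\frac{a_{j-1}(r)}{a_j(r)}\right),
\]
whose first term is precisely the quantity assumed in \eqref{eqn:simpledMR} to tend to $0$. If $\liminf I_j > 0$, pick $\varepsilon > 0$ and $r_1$ with $I_j(r) \ge \varepsilon$ for $r \ge r_1$; by \eqref{eqn:simpledMR} choose $r_2 \ge r_1$ with $t_{j-1}(r)/a_j(r) \le \varepsilon/2$ for $r \ge r_2$. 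Then $\frac{d}{dr}(a_{j-1}/a_j) \le -\varepsilon/2$ on $[r_2,\infty)$, so $a_{j-1}(r)/a_j(r) \to -\infty$, contradicting $a_{j-1}, a_j \ge 0$. Hence $\liminf I_j = 0$, and the reduction finishes part 1.

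\textbf{Parts 2 and 3.} Here I argue straight from the differential inequality. If $\liminf_{r\to R} I_j(r) > 0$ then $I_j(r) \ge \varepsilon$ on a tail, i.e.\ $a_j'(r)/a_j(r)^2 \ge \varepsilon/a_{j-1}(r)$; integrating from $r_1$ gives
\[
  \varepsilon \int_{r_1}^{\rho} \frac{ds}{a_{j-1}(s)} \;\le\; \frac{1}{a_j(r_1)} - \frac{1}{a_j(\rho)} \;\le\; \frac{1}{a_j(r_1)}
\]
for all $\rho$ in the tail. In case 3 ($R<\infty$) this bounds $\int_{r_1}^{R} ds/a_{j-1}(s)$, which—once one checks that the divergence in \eqref{eqn:minimaldMR} is not merely an artefact of the lower endpoint—contradicts \eqref{eqn:minimaldMR}. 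In case 2 one brings in the growth function $\alpha$: a Borel–Nevanlinna type lemma gives $t_j(r) = a_j'(r) \le \alpha(a_j(r))$ off a set $E$ of finite measure, so off $E$ one has $I_j(r) \le \alpha(a_j(r))\,a_{j-1}(r)/a_j(r)^2$; then $\{I_j \ge \varepsilon\} \subseteq E \cup \{\alpha(a_j)\,a_{j-1}/a_j^2 \ge \varepsilon\}$, and since \eqref{eqn:alphaMR} makes the second set have unbounded complement while $E$ is bounded in measure, $\{I_j < \varepsilon\}$ is unbounded for each $\varepsilon$, i.e.\ $\liminf_{r\to\infty} I_j(r) = 0$. (Equivalently, one passes to the monotone time-change $\lambda(r) = \int_{r_0}^r a_j'(s)/\alpha(a_j(s))\,ds$, under which $I_j(r)\,dr$ becomes $\big(\alpha(a_j)\,a_{j-1}/a_j^2\big)\,d\lambda$ and one works with the quantity of \eqref{eqn:alphaMR} directly.)

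\textbf{Main obstacle.} All the geometry—the Schwarz inequality producing $I_j$, the rewriting \eqref{eqn:Ibis}, Theorem~\ref{thm:1stdlimthm}—is already in hand, so the entire proof is the calculus of the three reductions. Part 1 is immediate from the quotient identity. The real work, and the place where care is needed, is in the two growth lemmas of parts 2 and 3: getting the quantifiers right (which sequence realizes the $\liminf$, and why deleting a finite-measure set still leaves a tail), pinning down the roles of $\alpha$ and of the finite endpoint $R$, and isolating the degenerate regimes—$a_j$ bounded, or $a_{j-1}$ vanishing too fast near $r_0$—which must be disposed of by hand. Part 3 is then the finite-interval specialization of the same scheme.
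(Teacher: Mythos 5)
Your proposal is correct and follows essentially the same route as the paper's proof: reduce via Theorem~\ref{thm:1stdlimthm} to showing $\liminf I_j=0$, then derive a contradiction from the tail inequality $c\le A'(r)a_{j-1}(r)/A(r)^2$ --- in part 1 by your quotient-derivative identity (which is exactly the paper's integration by parts), in part 2 by the Borel lemma $A'\le\alpha(A)$ off a set of finite measure, and in part 3 by integrating $A'/A^2$ up to $R$. The only caveat, which affects the paper's argument equally, is that the Borel lemma you invoke needs $\int^{\infty}ds/\alpha(s)<\infty$ (the hypothesis as printed in part 2 is a typo), and the passage from $\liminf$ in \eqref{eqn:alphaMR} to smallness of $I_j$ at points \emph{outside} the exceptional set $E$ is the genuinely delicate quantifier step you rightly flag.
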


\begin{proof}
We write out the case $j = k$; the others proceed similarly. For notational simplicity, set $A(r) = \int_0^r t_k(s) \, ds$. We see that $I(r) \geq c$ is equivalent to\footnote{One might have to assume $r \geq$ some $r_1$ to guarantee $A(r) \neq 0$ in the arguments below. We will assume, WLOG, that $r_1 = 0$.}
\begin{equation}
	\label{eqn:Aineq1}
	c \leq \frac{A'(r)}{A^2(r)} \, \int_0^r t_{k-1}(s) \, ds.
\end{equation}
We show that (\ref{eqn:Aineq1}) contradicts, in turn, each of the three hypotheses in the statement of Theorem \ref{thm:dclosedMT}. The final comments about convergence for bounded test forms follow directly from (\ref{eqn:massratios}) and (\ref{eqn:Ibis}).
\vskip 2mm
\noindent 1. We integrate (\ref{eqn:Aineq1}) from 1 to $r$ and get
\begin{equation}
	\label{eqn:Aineq2}
		\begin{array}{rcl}
		c(r - 1) & \leq & \left[ -\frac1{A(t)} \, \int_0^t t_{k-1}(s) \, ds \right]_1^r + \int_1^r \frac{t_{k-1}(s)}{A(s)} \, ds \\
			&	&	\\
			& \leq & \int_1^r \frac{t_{k-1}(s)}{A(s)} \, ds + O(1),
		\end{array}
\end{equation}
on suitable sequences of $r \rightarrow R$. If $\lim \frac{t_{k-1}(r)}{A(r)} = 0,$ we get a contradiction for some $R >> 0$.
\vskip 2mm
\noindent 2. Recall that since $A$ is increasing, then $A'(r) \leq \alpha(A(r))$ outside a set $E$ of finite length. If $E = \{r \, | \, A'(r) > \alpha(A(r))\}$, one has 
$$\text{measure}(E) \leq \int_E \frac{A'}{\alpha(A)} \, dr \leq \int_0^{\infty} \frac1{\alpha(u)} \, du < \infty.$$
From (\ref{eqn:Aineq1}) we get that on the complement of $E$
$$c\leq \frac{\alpha(A)}{A^2} \, \int_0^r t_{k-1}(s) \, ds,$$
which is a contradiction.
\vskip 2mm
\noindent 3. If $c \leq \frac{A'}{A^2} \int_0^r \, t_{k-1}(s) \, ds,$ with $r > R_0$, then
$$\int_{r_0}^r \frac{c \, ds}{\int_0^r t_{k-1} \, ds} \leq \int_{r_0}^r \frac{A'}{A^2} \,ds = \left[ -\frac{1}{A} \right]_{r_0}^r \leq \frac{1}{A(R_0)} < \infty,$$ which leads again to a contradiction and proves 3.
\end{proof}

We examine next another case where we can analyze the condition $I(r_{\ell}) \rightarrow 0$ by manipulation of ratios of volume growth. We start from the simple observation that (\ref{eqn:Aineq1}) is equivalent to
\begin{equation}
	\label{eqn:MR1dibis}
		\frac1c \frac{A'}{A^{1+\delta}} \geq \frac{A^{1-\delta}}{\int_0^r t_{k-1}(s) \, ds}, \text{for any} \; \delta > 0.
\end{equation}
Integrating (\ref{eqn:MR1dibis}) on $[r_0, r]$, one gets
\begin{equation}
	\label{eqn:MR1dint}
		\frac1c \left[ -\frac{A^{-\delta}}{\delta}\right]_{r_0}^r = \frac1{c \, \delta} \left[ A^{-\delta}(r_0) - A^{-\delta}(r) \right] \geq \int_{r_0}^r \frac{A^{1-\delta}(t)}{\int_0^t t_{k-1}(s) \, ds} \; dt.
\end{equation}
We conclude that
\begin{equation}
	\label{eqn:MR1dibdd}
		\frac1c A^{-\delta}(r_0) \geq \delta \int_{r_0}^r \frac{A^{1-\delta}(t)}{\int_0^t t_{k-1}(s) \, ds} \; dt, \text{for any } \delta > 0.
\end{equation}
In particular, if
\begin{equation}
	\label{eqn:MR1supdelta}
			\sup_{\delta > 0, r < R} \; \delta  \; \int_{r_0}^r \frac{A^{1-\delta}(t)}{\int_0^t t_{k-1}(s) \, ds} \, dt = \sup_{\delta > 0, r < R} \;  \int_{r_0}^r \frac{\, (\int_0^t t_k(s) \, ds)^{1-\delta}\,}{\int_0^t t_{k-1}(s) \, ds} \, dt= +\infty,
\end{equation}
then inequality (\ref{eqn:MR1dibdd}) fails for some $\delta > 0, r \in (0,R)$. Since $c > 0$ was arbitrary, we obtain the following corollary of Theorem \ref{thm:1stdlimthm}.

\begin{corollary}
	\label{cor:MR1delta}
	If (\ref{eqn:MR1supdelta}) holds, then there are closed, positive currents $S$ among the cluster points of $S_r/c_r.$
\end{corollary}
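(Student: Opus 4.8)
The plan is to derive Corollary~\ref{cor:MR1delta} directly from Theorem~\ref{thm:1stdlimthm} by a proof by contradiction, mirroring the bookkeeping already carried out in equations (\ref{eqn:MR1dibis})--(\ref{eqn:MR1dibdd}). Suppose the conclusion fails, i.e. there is \emph{no} closed positive current among the cluster points of $S_r/c_r$. By Theorem~\ref{thm:1stdlimthm}, if $I(r_\ell)\to 0$ along some sequence $r_\ell\to R$, then every cluster point of $S_{r_\ell}/c_{r_\ell}$ is closed and positive; since $Y$ is compact and the currents have mass $1$, cluster points always exist. Hence the failure of the conclusion forces $\liminf_{r\to R} I(r) > 0$, so there is a constant $c>0$ and some $r_1 < R$ with $I(r)\geq c$ for all $r\in[r_1,R)$. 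As noted before the statement (WLOG $r_1 = 0$ after adjusting the lower limit of integration), this is exactly inequality (\ref{eqn:Aineq1}), hence equivalent to (\ref{eqn:MR1dibis}) for every $\delta>0$.

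Next I would simply invoke the chain (\ref{eqn:MR1dibis}) $\Rightarrow$ (\ref{eqn:MR1dint}) $\Rightarrow$ (\ref{eqn:MR1dibdd}): integrating (\ref{eqn:MR1dibis}) over $[r_0,r]$ and using that $A$ is nondecreasing (so $A^{-\delta}(r)\geq 0$ and the bracket term is bounded by $A^{-\delta}(r_0)$), one obtains for this particular $c$ and every $\delta>0$, $r<R$,
\[
\frac1c A^{-\delta}(r_0)\ \geq\ \delta\int_{r_0}^r \frac{A^{1-\delta}(t)}{\int_0^t t_{k-1}(s)\,ds}\,dt
\ =\ \delta\int_{r_0}^r \frac{(\int_0^t t_k(s)\,ds)^{1-\delta}}{\int_0^t t_{k-1}(s)\,ds}\,dt.
\]
The left-hand side is a fixed finite number (independent of $\delta$ and $r$), so taking the supremum over $\delta>0$ and $r<R$ on the right gives a finite bound; but this directly contradicts hypothesis (\ref{eqn:MR1supdelta}), which asserts that supremum is $+\infty$. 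This contradiction shows the conclusion of the corollary must hold.

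The only genuine point requiring care — and the place I expect to spend the most words — is the passage from ``no closed positive cluster current'' to the uniform lower bound $I(r)\geq c$ on a terminal interval $[r_1,R)$: one must rule out the scenario where $I$ dips below any given $\epsilon$ only on a sequence not reaching $R$, or oscillates. The clean way is contrapositive plus a diagonal/compactness argument: if $\liminf_{r\to R} I(r) = 0$ then pick $r_\ell\to R$ with $I(r_\ell)\to 0$; the mass-$1$ positive currents $S_{r_\ell}/c_{r_\ell}$ have a weakly convergent subsequence by compactness of the unit ball of currents (using compactness of $Y$), and Theorem~\ref{thm:1stdlimthm} makes its limit closed and positive — contradicting our assumption. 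Everything else is the algebraic manipulation already displayed in the excerpt, so the proof is genuinely short; I would write it in three or four lines invoking (\ref{eqn:MR1dibdd}) and (\ref{eqn:MR1supdelta}) and the contrapositive of Theorem~\ref{thm:1stdlimthm}.

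\begin{proof}
Suppose, for contradiction, that no cluster point of the family $S_r/c_r$ is a closed positive current. The currents $S_r/c_r$ are positive of mass $1$ on the compact manifold $Y$, hence for any sequence $r_\ell\to R$ the family $\{S_{r_\ell}/c_{r_\ell}\}$ has a weakly convergent subsequence. By Theorem~\ref{thm:1stdlimthm}, if $I(r_\ell)\to 0$ then every such limit is closed and positive; our assumption therefore forces $\liminf_{r\to R} I(r) > 0$. Thus there exist $c>0$ and $r_1<R$ with $I(r)\geq c$ for all $r\in[r_1,R)$. As explained before the statement of Corollary~\ref{cor:MR1delta}, we may assume $r_1 = 0$, so that, with $A(r) = \int_0^r t_k(s)\,ds$, inequality (\ref{eqn:Aineq1}) holds for all $r<R$, hence so does (\ref{eqn:MR1dibis}) for every $\delta>0$. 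Integrating over $[r_0,r]$ as in (\ref{eqn:MR1dint}) and using that $A$ is nondecreasing yields, for this fixed $c$, every $\delta>0$ and every $r<R$,
\[
\frac1c A^{-\delta}(r_0)\ \geq\ \delta\int_{r_0}^r \frac{(\int_0^t t_k(s)\,ds)^{1-\delta}}{\int_0^t t_{k-1}(s)\,ds}\,dt,
\]
which is inequality (\ref{eqn:MR1dibdd}). Taking the supremum over $\delta>0$ and $r<R$ of the right-hand side gives the finite upper bound $\tfrac1c A^{-\delta}(r_0)$ — but this contradicts hypothesis (\ref{eqn:MR1supdelta}), which states that this supremum is $+\infty$. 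Hence some cluster point of $S_r/c_r$ is a closed positive current.
\end{proof}
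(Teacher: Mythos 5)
Your proof is correct and follows essentially the same route as the paper: the corollary is presented there as an immediate consequence of the chain (\ref{eqn:MR1dibis})--(\ref{eqn:MR1dibdd}) together with Theorem \ref{thm:1stdlimthm}, with the observation that ``$c>0$ was arbitrary,'' which is exactly the contradiction you run. Your only reorganization is to start from the negation of the conclusion and pass through weak compactness of mass-one positive currents to get the uniform bound $I(r)\geq c$; this is a correct and slightly more explicit packaging of the same argument.
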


Focusing next on the case $\delta = 1$ in (\ref{eqn:MR1dibdd}), if
\begin{equation*}
	\int_{R_0}^R \frac1{\int_0^t t_{k-1}(s) \, ds} \, dt = +\infty,
\end{equation*}
then we can apply corollary \ref{cor:MR1delta}. If furthermore $k = 1$, this last becomes
\begin{equation}
	\label{eqn:MR1dibddk1}
	\int_{R_0}^R \frac1{\int_0^t t_0(s) \, ds} \, dt = +\infty,
\end{equation}
a condition which is interesting since it is {\emph{independent of}} $\phi$. Note that this condition can also be used for $j = 1$ and arbitrary $k$ to construct closed limit currents of bidimension $(1,1)$. Therefore, as a special case, we have the following corollary.

\begin{corollary}
	\label{cor:k1par}
		If $\dim X = k, R = \infty$ and $\tau$ is a parabolic exhaustion of $X$, then $\phi$ admits closed positive limit currents of bidimension $(1,1)$ as limit points of $S_{1,r}/c_{1,r}$, for any $Y, \phi$ and $\omega$.
\end{corollary}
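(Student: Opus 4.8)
The plan is to reduce the statement to the case $\delta = 1$ of inequality (\ref{eqn:MR1dibdd}) combined with Corollary \ref{cor:MR1delta}, specialized to bidimension $j=1$. First I would observe that since we are constructing currents of bidimension $(1,1)$, the relevant mass ratio is $I_1(r)$, and the quantity $t_{j-1}(r)$ with $j=1$ becomes $t_0(r) = \int_{B_r}(dd^c\tau)^k$. The key point, which is exactly the content of the remark preceding the corollary, is that for $j=1$ the denominator $\int_0^t t_0(s)\,ds$ of the integrand in (\ref{eqn:MR1dibddk1}) is \emph{independent of $\phi$}: it depends only on $(X,\tau)$. So it suffices to verify the divergence condition
\begin{equation*}
	\int_{R_0}^{\infty} \frac{dt}{\int_0^t t_0(s)\,ds} = +\infty
\end{equation*}
using only the hypothesis that $\tau$ is a parabolic exhaustion with $R = \infty$.

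The main obstacle, and the heart of the argument, is therefore the following: show that parabolicity of $\tau$ forces $t_0(s) = \int_{B_s}(dd^c\tau)^k$ to be \emph{bounded} in $s$. This is immediate from the definition of parabolic exhaustion adopted in the Introduction — $(dd^c\tau)^k$ vanishes outside a compact set $K$ — since then $t_0(s) = \int_K (dd^c\tau)^k =: M < \infty$ for all $s$ large enough that $B_s \supseteq K$ (recall $\tau$ is an exhaustion, so the $B_s$ exhaust $X$). Hence $\int_0^t t_0(s)\,ds \leq Mt + O(1)$ as $t \to \infty$. Consequently
\begin{equation*}
	\int_{R_0}^{\infty} \frac{dt}{\int_0^t t_0(s)\,ds} \geq \int_{R_0}^{\infty} \frac{dt}{Mt + O(1)} = +\infty,
\end{equation*}
which is precisely condition (\ref{eqn:MR1dibddk1}).

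Once this divergence is established, I would invoke the case $\delta = 1$ of (\ref{eqn:MR1dibdd}): if some subsequential lower bound $I_1(r) \geq c > 0$ held for all large $r$, then integrating as in (\ref{eqn:MR1dint})--(\ref{eqn:MR1dibdd}) would give $\frac{1}{c}A^{-1}(r_0) \geq \int_{r_0}^r \frac{dt}{\int_0^t t_0(s)\,ds}$, whose right-hand side diverges — a contradiction. Therefore $\liminf_{r\to\infty} I_1(r) = 0$, so there is a sequence $r_\ell \to \infty$ with $I_1(r_\ell) \to 0$, and Theorem \ref{thm:1stdlimthm} (or equivalently Corollary \ref{cor:MR1delta}) yields that the cluster points of $S_{1,r_\ell}/c_{1,r_\ell}$ are positive closed currents of mass $1$ and bidimension $(1,1)$. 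Since none of this used any property of $Y$, $\phi$ or $\omega$ beyond non-degeneracy (needed only to ensure $c_{1,r} \neq 0$ for large $r$, as $\phi^*(\omega)$ is then not identically zero), the conclusion holds for all such data, as claimed. One small point to check en route is that $c_{1,r} = \int_{B_r} u_r (dd^c\tau)^{k-1}\wedge\phi^*(\omega) > 0$ for $r$ large: this follows because $\phi$ non-degenerate implies $(dd^c\tau)^{k-1}\wedge\phi^*(\omega)$ is a nonzero positive measure (at points where $d\phi$ has rank $k$ and $dd^c\tau$ has rank $\geq k-1$, which occur on a set of positive measure), and $u_r > 0$ on the interior of $B_r$.
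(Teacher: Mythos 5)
Your proof is correct and follows essentially the same route as the paper: parabolicity makes $t_0(r)$ constant for large $r$, so $\int_0^t t_0(s)\,ds$ grows linearly and the integral condition (\ref{eqn:MR1dibddk1}) diverges logarithmically, after which the $\delta=1$ case of (\ref{eqn:MR1dibdd}) and Corollary \ref{cor:MR1delta} apply. Your added remark on the positivity of $c_{1,r}$ is a reasonable supplement the paper leaves implicit.
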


\begin{proof} 
If $\tau$ is a parabolic exhaustion, i.e., $(dd^c \tau)^k = 0$,  for $\tau \geq \text{some} \; r_0$, then
\begin{equation*}
	\aligned
	t_0(r) & = \int_0^r (dd^c \tau)^k \\
	& = \int_{\{r_0 < \tau < r\}} (dd^c \tau)^k + \int_{B_{r_0}} (dd^c \tau)^k \\
	& =  C 
	\endaligned
\end{equation*}
for $r \geq r_0 >> 0.$ In particular, $ \int_0^r \frac{dt}{\int_0^t t_0(s) \, ds}$ diverges logarithmically, verifying condition (\ref{eqn:MR1dibddk1}). 
\end{proof}
Examining the proof of corollary \ref{cor:k1par} shows the conclusions to hold whenever (\ref{eqn:MR1dibddk1}) is verified, and the corollary lets one interperet (\ref{eqn:MR1dibddk1}) as a weak form of parabolicity for the pair $X, \tau$, since it is independent of $\phi, Y, \omega$. Along the same lines, suppose that the denominator $\int_0^r t_{k-1}(s) \, ds$ of the integrand of (\ref{eqn:MR1supdelta}) is bounded, but that $A(r)$ is unbounded (as in the parabolic case), then for $\delta \in (0, 1)$, (\ref{eqn:MR1dibdd}) gives
\begin{equation}
	\label{eqn:pseudopar}
	\frac1c A^{-\delta}(r_0) \geq \delta \int_{r_0}^r \frac{A^{1-\delta}(t)}{\int_0^t t_{k-1}(s) \, ds} \; dt > c' \int_{r_0}^r A^{1-\delta}(t) \, dt,
\end{equation}
a contradiction, if $\int_{r_0}^r A^{1-\delta}\,dt$ is unbounded. 

The same considerations apply to a bounded situation as follows. Let $\phi_n: \triangle \rightarrow Y$ be a sequence of maps from the unit disk to $Y$. 

\begin{proposition}
	\label{prop:nfinrad}
	Let $\phi_n$ be a sequence of maps from $\triangle$ to $Y$. Assume that $A_{n, \delta} := \int_0^r \left(\int_{D_s} \phi^*_n(\omega)\right)^{1-\delta} \, ds, 0 < r, \delta < 1,$ is unbounded. Then there is a positive, d-closed current of bidimension (1,1) among the cluster currents of $S_{1,r,n}/c_{1,n,r}.$
\end{proposition}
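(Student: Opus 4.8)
The plan is to treat Proposition~\ref{prop:nfinrad} as the varying--map version of the ``bounded situation'' discussion that culminates in~(\ref{eqn:pseudopar}). Here $X=\triangle$, $k=j=1$, $R=1$, and I fix a radial plurisubharmonic exhaustion $\tau$ whose sublevel sets $B_s$ are the Euclidean disks $D_s$; since $k=j$ there is no factor $(dd^c\tau)^{k-j}$, and the crucial first remark is that the lower characteristic $t_0(s)=\int_{D_s}dd^c\tau$ does not involve $\phi_n$. By the computation~(\ref{eqn:lambints}) one has $\int_0^t t_0(s)\,ds = t^{2}\int_{D_t}dv_t\wedge d^cv_t$, which is bounded on $[0,1)$ by a constant $M$ that is the same for every $n$. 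Hence, writing $A_n(r)=\int_0^r t_{1,n}(s)\,ds$ with $t_{1,n}(s)=\int_{D_s}\phi_n^{*}\omega$, formula~(\ref{eqn:Ibis}) yields
$$I_{1,n}(r)=\frac{\bigl(\int_0^r t_0(s)\,ds\bigr)\,t_{1,n}(r)}{A_n(r)^{2}}\le M\,\frac{A_n'(r)}{A_n(r)^{2}}\qquad(0<r<1),$$
uniformly in $n$.

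By Theorem~\ref{thm:1stdlimthm}, together with the uniform estimate~(\ref{eqn:massratios}) (which makes the conclusion apply verbatim to a sequence of maps and to radii $r_\ell\to R^{-}$, as in Theorem~\ref{thm:Brody}), it suffices to produce sequences $n_\ell\to\infty$ and $r_\ell\in(0,1)$ with $I_{1,n_\ell}(r_\ell)\to 0$; a weak-$\ast$ cluster value of $S_{1,r_\ell,n_\ell}/c_{1,n_\ell,r_\ell}$ is then a positive $d$-closed current of bidimension $(1,1)$ and mass $1$. Suppose no such sequences exist. Then there are $c>0$, $r_0<1$ and $n_0$ with $I_{1,n}(r)\ge c$ for all $n\ge n_0$, $r\in[r_0,1)$. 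Fix $n\ge n_0$; feeding $c\le MA_n'/A_n^{2}$ into the elementary manipulation~(\ref{eqn:MR1dibis})--(\ref{eqn:MR1dibdd}) (with $A=A_n$ and $k-1=0$, so $\int_0^t t_0\le M$) gives
$$\frac1c\,A_n(r_0)^{-\delta}\ge\frac{\delta}{M}\int_{r_0}^{r}A_n(t)^{1-\delta}\,dt\qquad(0<\delta<1,\ r_0\le r<1),$$
so $\int_{r_0}^{1}A_n(t)^{1-\delta}\,dt<\infty$ for each fixed $n\ge n_0$ and each $\delta\in(0,1)$; letting $\delta\to1^{-}$ also yields the uniform bound $A_n(r_0)\le M/\bigl(c(1-r_0)\bigr)$.

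It remains to contradict the hypothesis that $A_{n,\delta}$ is unbounded, and this is the step requiring care: $A_{n,\delta}$ is assembled from the pointwise area $t_{1,n}=A_n'$, whereas the previous display controls its primitive $A_n$, and the two are linked only through the convexity of $A_n$ (equivalently, the monotonicity of $s\mapsto t_{1,n}(s)=\int_{D_s}\phi_n^{*}\omega$, together with $A_n(0)=0$), which gives $A_n(s)\le s\,t_{1,n}(s)\le A_n(2s)$ on dyadic scales. Using this together with the uniform bound $A_n(r_0)\le M/(c(1-r_0))$ just obtained, I would show that the finiteness of $\int_{r_0}^{1}A_n(t)^{1-\delta}\,dt$ forces $A_{n,\delta}$ to be bounded uniformly in $n\ge n_0$ --- contradicting the hypothesis --- and then conclude by a diagonal extraction over the double index $(n,r)$. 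The heart of the matter, and the only place where a genuinely new estimate is needed beyond the single--map argument surrounding~(\ref{eqn:pseudopar}), is precisely this comparison of the pointwise and averaged area--growth functions, carried out with constants independent of $n$; if that comparison turns out to require reading $A_{n,\delta}$ with its primitive, this should be read into the normalization of $\tau$. Everything else transcribes directly, the uniformity being guaranteed by the fact that $t_0$, and hence $M$, does not depend on $\phi_n$.
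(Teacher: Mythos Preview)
Your overall strategy is exactly the paper's: the proposition is stated immediately after~(\ref{eqn:pseudopar}) with the remark that ``the same considerations apply to a bounded situation,'' and no separate proof is given. Thus the intended argument is precisely the one you set up --- negate, obtain $c\le M\,A_n'/A_n^{2}$ uniformly in $n$ from~(\ref{eqn:Ibis}) and the $\phi$-independence of $t_0$, feed this into~(\ref{eqn:MR1dibis})--(\ref{eqn:MR1dibdd}), and contradict unboundedness.

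The gap you flag at the end is real, and you cannot close it as the statement is written. The inequality chain~(\ref{eqn:MR1dibdd})--(\ref{eqn:pseudopar}) controls $\int_{r_0}^{r}A_n(t)^{1-\delta}\,dt$, where $A_n(t)=\int_0^{t}t_{1,n}(s)\,ds$ is the \emph{primitive} of the area function; but the hypothesis involves $\int_0^{r}t_{1,n}(s)^{1-\delta}\,ds$. Your dyadic comparison $A_n(s)\le s\,t_{1,n}(s)\le A_n(2s)$ only yields $A_n(t)^{1-\delta}\le t_{1,n}(t)^{1-\delta}$ on $(0,1)$, which is the wrong direction; and one can have, say, $t_{1}(s)\sim(1-s)^{-2}$, so that $\int_0^{r}t_1(s)^{1-\delta}\,ds\to\infty$ for small $\delta$ while $A(t)=t/(1-t)$ gives $\int_0^{1}A(t)^{1-\delta}\,dt<\infty$ and $I_1(r)$ stays bounded below. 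So no estimate independent of further hypotheses will convert control of $\int A_n^{1-\delta}$ into control of $\int t_{1,n}^{1-\delta}$.

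Comparing with~(\ref{eqn:pseudopar}) and the notation $A$ fixed just above it, the inner integrand in the definition of $A_{n,\delta}$ should almost certainly be the primitive $A_n(s)=\int_0^{s}\!\int_{D_u}\phi_n^{*}\omega\,du$, not $\int_{D_s}\phi_n^{*}\omega$. With that reading your argument is essentially complete and coincides with the paper's: the only additional point is uniformity in $n$, and there the observation $A_n(r_0)\le M/\!\bigl(c(1-r_0)\bigr)$ (your $\delta\to1^{-}$ step, equivalently integrating $A_n'/A_n^{2}\ge c/M$) combined with $\int_0^{r}A_n^{1-\delta}\le r\,A_n(r)^{1-\delta}$ bounds the hypothesis quantity on every compact of $[0,1)$, and~(\ref{eqn:MR1dibdd}) handles the tail. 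Your parenthetical suspicion that the discrepancy ``should be read into'' the definition is exactly right.
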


The situation for $X$ of dimension $k=1$ and $R = \infty$ divides very neatly by corollary \ref{cor:k1par} into two cases, according as $\int_X \phi^{*} \omega < +\infty$ or $\int_X \phi^{*} \omega = +\infty$.

\begin{corollary}
\label{cor:curvesfinite}
In corollary \ref{cor:k1par}, if $\int_X \phi^{*} \omega$ is finite, then the currents $S_r/c_r$ converge weakly to the current $S(\varphi) := \int_X \phi^{*}(\varphi)/\int_X \phi^{*} \omega$. 
\end{corollary}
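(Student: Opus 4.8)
The plan is to analyze the currents $S_r/c_r$ directly in the one-dimensional finite-area case, rather than merely extracting a convergent subsequence. Since $\dim X = 1$, the only relevant bidimension is $(1,1)$, the factor $(dd^c\tau)^{k-j} = (dd^c\tau)^0 = 1$ disappears, and so $S_r(\varphi) = \int_{B_r} u_r\,\phi^*(\varphi)$ with $u_r = (1-\tau/r)^+$, and $c_r = \int_{B_r} u_r\,\phi^*(\omega)$. By Corollary \ref{cor:k1par} we already know cluster points exist and are positive and closed; the point now is to identify the \emph{unique} cluster point when $M := \int_X \phi^*\omega < +\infty$.

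First I would observe that $0 \le u_r \le 1$ and $u_r \nearrow 1$ pointwise on $X$ as $r \to \infty$ (using the convention $\tau \ge r_0 > 0$, every point lies in $B_r$ for $r$ large, and $u_r(x) = 1 - \tau(x)/r \to 1$). Hence for any fixed smooth test form $\varphi$ of bidegree $(1,1)$, the integrand $u_r\,\phi^*(\varphi)$ is dominated in absolute value by $\|\varphi\|_\infty$ times the positive form $\phi^*\omega$ up to a fixed constant from comparing $\varphi$ with $\omega$ (or more cleanly: write $\varphi = \varphi^+ - \varphi^-$ via $C\|\varphi\|_\infty\,\omega \pm \varphi \ge 0$, both pieces bounded by a multiple of $\omega$), and $\int_X \phi^*\omega = M < \infty$ by hypothesis. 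So the dominated convergence theorem applies to $\int_{B_r} u_r\,\phi^*(\varphi) = \int_X \mathbf{1}_{B_r} u_r\,\phi^*(\varphi)$, giving $S_r(\varphi) \to \int_X \phi^*(\varphi)$ for every smooth test $(1,1)$-form $\varphi$. Applying this with $\varphi = \omega$ gives $c_r \to M = \int_X \phi^*\omega$, which is strictly positive since $\phi$ is non-degenerate (so $\phi^*\omega$ is a nonzero positive form and $M > 0$). Dividing, $S_r(\varphi)/c_r \to \int_X \phi^*(\varphi)/\int_X \phi^*\omega$ for every smooth test form, which is exactly weak convergence to the claimed current $S$.

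I would then note two small points for completeness: the limiting current $S(\varphi) = \int_X\phi^*(\varphi)/M$ is well-defined as a current of bidimension $(1,1)$ on $Y$ precisely because $\phi^*\omega$ is integrable, it is manifestly positive (pairing a positive test form against it gives a nonnegative number) and of mass $1$ (pair with $\omega$); and it is $d$-closed, since for a test $1$-form $\alpha$ on $Y$, $\langle dS,\alpha\rangle = \pm\langle S, d\alpha\rangle = \pm\frac1M\int_X \phi^*(d\alpha) = \pm\frac1M\int_X d(\phi^*\alpha) = 0$ by Stokes (using that $\phi^*\alpha$ has, morally, integrable enough behavior — in fact this also follows for free from Corollary \ref{cor:k1par}, since $S$ is the unique cluster point). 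The convergence for \emph{all} cluster points being the same current $S$ upgrades subsequential convergence to genuine weak convergence of the full family.

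The main obstacle — really the only nontrivial input — is justifying the passage to the limit uniformly enough to cover all smooth test forms simultaneously, i.e., the domination step. The subtlety is that $\phi^*(\varphi)$ is not positive, so one cannot integrate it naively; the fix is the standard decomposition $\varphi = \varphi_+ - \varphi_-$ with $0 \le \varphi_\pm \le C\|\varphi\|_\infty\,\omega$ (as already used in \eqref{eqn:decomp}--\eqref{eqn:decompbounds} of the paper), after which each piece pulls back to a positive form dominated by $C\|\varphi\|_\infty\,\phi^*\omega \in L^1(X)$, and ordinary dominated convergence finishes it. Everything else is bookkeeping: positivity, mass normalization, and the identification that the limit does not depend on the subsequence.
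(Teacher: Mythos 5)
Your proof is correct and is essentially the paper's own argument: the paper also writes $S_r(\omega)=\int_X\phi^*\omega-\int_X(\chi(v_r)-1)\phi^*\omega$ and invokes dominated convergence (with $\phi^*\omega\in L^1$ as the dominating form) for both $c_r=S_r(\omega)$ and $S_r(\varphi)$, then divides. Your additional remarks on the decomposition of $\varphi$ into positive pieces and on the positivity, mass, and closedness of the limit are fine but not needed beyond what the paper records.
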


\begin{proof}  Write $S_r(\omega)$ as
\begin{equation*}
\aligned
\int_{B_r} \chi(v_r) \, \phi^* \omega & =  \int_X \phi^* \omega - \int_X (\chi(v_r) - 1) \phi^* \omega 
\endaligned
\end{equation*}
where $\lim_{r \rightarrow +\infty}  \int_X (\chi(v_r) - 1) \phi^* \omega = 0$, by dominated convergence. The same observation applied to $S_r(\varphi)$ gives the corollary.
\end{proof}

Notice, however, that $\int_X \phi^*\omega$ unbounded does not imply the existence of a positive closed cluster current if $X$ is not parabolic. For example, a generic (singular) holomorphic foliation $\mathcal{F}$ of $\PP^2$ does not have a directed positive closed current even though all leaves of $\mathcal{F}$ have infinite area. See \cite{fs} for details. 

Recall that a Riemann surface is parabolic if there is no non-constant bounded subharmonic function on it, equivalently, if it does not admit a Green's function. (\cite{as}, p. 204). Thus, in the case of the generic foliation $\mathcal{F}$ of $\PP^2$, for example, the non-existence of directed positive closed currents implies by Corollary \ref{cor:k1par} that all leaves must admit non-trivial bounded subharmonic functions and must admit Green's functions.

\remark
\label{rem:parabolic}
In the situation of corollary \ref{cor:curvesfinite} when $X$ is an open Riemann surface with a parabolic exhaustion function in the sense of Stoll \cite{ws}, that is, when the exhaution $\log \tau$ is harmonic and has no critical points outside a compact set, then $X$ can be compactified to $\bar{X}$ by adding a finite number of points at infinity, and if the area of $\phi(X)$ is finite,  the mapping $\phi$ can be extended across these finitely many points. It suffices to observe that the graph of $\phi$ has finite area, and hence Bishop's extension theorem \cite{eb} says that its closure is an analytic set. In this case the current $S$ of corollary \ref{cor:curvesfinite} is given by integrating over the image $\phi(\bar{X})$, counting multiplicities.

\begin{corollary}
\label{cor:curvesinfinite}
In corollary \ref{cor:k1par}, if $\int_X \phi^* \omega = \infty,$ then the support of $S$ is contained in the intersection $\cap_{r \geq r_0} \overline{\phi(X \setminus B_r)}$.
\end{corollary}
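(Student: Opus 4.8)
The plan is to show that any limit current $S$ of the sequence $S_{r_\ell}/c_{r_\ell}$ has support in $\overline{\phi(X\setminus B_{r_0})}$ for every fixed $r_0$, and then intersect over all $r_0$. Since $X$ has dimension one and we are in the situation of Corollary \ref{cor:k1par}, the currents $S_{r}$ act on functions $\varphi\in\mathcal C^\infty(Y)$ by $S_r(\varphi)=\int_{B_r}\chi(v_r)\,\phi^*(\varphi\,\omega^0)=\int_{B_r}\chi(v_r)\,\phi^*\varphi\cdot\phi^*\omega$ with the convention that a $(0,0)$-test form is just a function; more precisely $S_r(\psi)$ for $\psi$ a $(1,1)$-form is $\int_{B_r}\chi(v_r)\phi^*\psi$. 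Here the hypothesis $\int_X\phi^*\omega=\infty$ forces $c_r=\int_{B_r}\chi(v_r)\phi^*\omega\to\infty$.

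First I would fix $r_0>0$ and let $\psi$ be a smooth $(1,1)$-test form on $Y$ whose support is disjoint from the closed set $K_{r_0}:=\overline{\phi(X\setminus B_{r_0})}$. I want to conclude $\langle S,\psi\rangle=0$. Because $\supp\psi\cap K_{r_0}=\emptyset$, the pullback $\phi^*\psi$ vanishes on $X\setminus B_{r_0}$; indeed if $x\notin B_{r_0}$ then $\phi(x)\in\phi(X\setminus B_{r_0})\subseteq K_{r_0}$, which misses $\supp\psi$, so $\phi^*\psi$ is supported inside the compact set $B_{r_0}$. Hence for every $r\ge r_0$,
\begin{equation*}
|\langle S_r,\psi\rangle| = \left|\int_{B_{r_0}}\chi(v_r)\,\phi^*\psi\right| \le \|\psi\|_\infty\int_{B_{r_0}}\phi^*\omega =: M(r_0) < \infty,
\end{equation*}
a bound independent of $r$, using $0\le\chi(v_r)\le 1$ and positivity of $\phi^*\omega$ (with $\|\psi\|_\infty$ measured against $\omega$ in the usual way). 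Dividing by $c_{r_\ell}\to\infty$ gives $\langle S_{r_\ell}/c_{r_\ell},\psi\rangle\to 0$, so $\langle S,\psi\rangle=0$. Since this holds for all $\psi$ supported off $K_{r_0}$, we get $\supp S\subseteq K_{r_0}=\overline{\phi(X\setminus B_{r_0})}$.

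Finally, since $r_0\ge r_0$ was arbitrary (any value $\ge$ the base level in the convention), $\supp S\subseteq\bigcap_{r\ge r_0}\overline{\phi(X\setminus B_r)}$, which is the claim. A couple of routine points to check along the way: that $c_{r_\ell}\to\infty$ really does follow from $\int_X\phi^*\omega=\infty$ (monotone convergence, since $\chi(v_r)\uparrow 1$ pointwise on $X$ as $r\to\infty$), and that the limit current $S$ from Corollary \ref{cor:k1par} is nonzero so the support statement is not vacuous (it has mass $1$ by construction). The only mild subtlety — the part I'd expect to be the "main obstacle," though it is more bookkeeping than difficulty — is making sure the argument is phrased for general $(1,1)$ test forms rather than just functions, and handling the normalization $\|\psi\|_\infty$ correctly; but the decomposition \eqref{eqn:decomp}--\eqref{eqn:decompbounds} already in hand, together with the trivial bound $\chi(v_r)\le 1$, makes this immediate, and no Schwarz-inequality estimate or mass-ratio input is needed here.
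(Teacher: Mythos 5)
Your proof is correct and is essentially the paper's own argument: the paper splits $S_r(\psi)$ into the part over $B_{r_1}$, which is $O(1)$ and dies after dividing by $c_r\to\infty$, plus a part pushed forward from $X\setminus B_{r_1}$ and hence supported in $\overline{\phi(X\setminus B_{r_1})}$. You state the dual version (testing against $\psi$ supported off $\overline{\phi(X\setminus B_{r_0})}$, so that only the bounded inner integral survives), but the key observations — $c_r\to\infty$ from $\int_X\phi^*\omega=\infty$ and the uniform bound on the contribution from $B_{r_0}$ — are identical.
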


\begin{proof}
Fix any $r_1 \geq r_0$, and write, for $r > r_1,$ 
\begin{equation}
\label{eqn:curvesinfpf}
\aligned
S_r(\psi) & = \int_{B_r} \chi(v_r) \phi^* \psi \\
& = \int_{B_{r_1}} \chi(v_t) \, \phi^* \psi + \int_{B_r \setminus B_{r_1}} \chi(v_t) \, \phi^* \psi \\
& = O(1) + \int_{B_r \setminus B_{r_1}} \chi(v_r) \, \phi^* \psi 
\endaligned
\end{equation}
If $\int_X \phi^*(\omega) = \infty$, this last shows that any cluster point of $S_r(\cdot)/S_r(\omega)$ is supported in $\overline{\phi(X \setminus B_{r_1})}$. Since $r_1$ was arbitrary, the result follows.
\end{proof}

\begin{remark}
\label{rem:localMR}
We can localize these arguments in dimension 1 as follows. Let $\Delta_{\rho}^* = \{z\in \CC \mid 0 < |z| < \rho\}$. Replace $X$ above by the punctured disk $\Delta^* = \Delta_1^*$ and take $v_t = \frac1t \log \frac1{\| z \|^2}$, which is a parabolic exhaustion. Applying the arguments above directly to a holomorphic map $\phi: \Delta^* \rightarrow Y$, we arrive at the dichotomy: for $\rho \in (0,1)$, either $\int_{\Delta_{\rho}^*} \phi^* \omega < +\infty$, and $\phi$ has a meromorphic extension across $0 \in \Delta$, or $\int_{\Delta_{\rho}^*} \phi^* \omega = +\infty$ and there is a closed, positive current $S$ on $Y$ with support contained in $\cap_{\rho \in (0,1)} \overline{\phi(\Delta_{\rho}^*)}$. If $\dim Y = 1$, this implies, in particular, the classical Casorati-Weierstrass theorem, but is sharper, since the identification of the limit current in the equidimensional case with $\frac1c [Y]$ gives a result on the equidistribution of values.
\end{remark}

To be more precise about the last remark, make a definition.
\begin{definition}
	\label{def:pod}
	A point $p \in Y$ is a $\phi$-density point if for every $\delta > 0$ there is a constant $\kappa_{\delta} > 0$ such that
	\begin{equation}
		\label{eqn:podineq}
		\liminf_{r \rightarrow R^-} \frac{\int_{B_r} \phi^*(\chi_{B_{\delta}(p)} \omega^k)}{\int_{B_r} \phi^*(\omega^k)} \geq \kappa_{\delta}.
	\end{equation}
\end{definition}
A point $p \in Y$ is a $\phi$-density point if and only if it is in the support of a cluster current of the family $S_r/c_r.$ The case $j=k=m$ of Theorem \ref{thm:1stdlimthm} then has the conclusion that every $p \in Y$ is a $\phi$-limit point, which adds some quantitative refinement to the mere density of $\phi(X)$.

It is natural in the present context to consider the closed set of all the positive closed currents which arise by the construction above. 
\begin{definition}
	\label{def:spacedclosed}
	Let $\mathcal{C}_j(\phi)$ denote the space of all positive closed currents of bidimension $(j,j)$ on $Y$ which are cluster points of currents of 	the form $S_{j,r}/c_{j,r}$ associated to $\phi$. 
\end{definition}
In Section \ref{sec:hdims} below we consider one case where $\mathcal{C}_j$ is shown to consist of one element using results from complex dynamics.

\begin{remark}
	\label{rmk:flex}
	In principle, of course, functional manipulations of (\ref{eqn:Aineq1}) other than (\ref{eqn:MR1dibis}) and following can be made which might lead to interesting conditions on $\phi$ for producing closed positive currents among the limit points of $S_r/c_r$. Other simple forms of $u_r$ as at the top of this section, or in remark \ref{rem:localMR} above, are useful for producing other kinds of limit currents. In section \ref{sec:ddc-closed} below we consider mainly the case of $dd^c$-closed limit currents, but also one case of $d$-closed currents, in Theorem \ref{thm:logdclosed}.
\end{remark}


\section{Limit currents which are  $dd^c$-closed}
\label{sec:ddc-closed}

In this section we take weighting functions much as in section \ref{sec:d-closed} above, but which lead to $dd^c$-closed currents of bidimension $(j,j), 1 \leq j \leq k = \dim X$. In many cases these can be as useful as the closed currents of section \ref{sec:d-closed} above, and in the equidimensional case $j = k = m = \dim Y$, they are equivalent. 

Assume now that $\log \sigma$ is a plurisubharmonic exhaustion of $X$, set $v_r = \log \frac r{\sigma}, u_r = \chi(v_r) = \log^+ \frac r{\sigma},$ where $\chi = \max(t,0)$. As in (\ref{eqn:Srdefddc}) we set 
\begin{equation}
	\label{eqn:defSddc}
	S_r(\psi) = \int_{B_r} u_r (dd^c \log \sigma)^{k-j} \phi^{*}(\psi),
\end{equation}
where $\psi$ is a test $(j,j)$-form on $Y$ and $B_r := \{ \sigma < r\}$, and define the $dd^c$-mass ration $J_j(r)$ by 
\begin{equation}
\label{eqn:defddcMR}
J_j(r) := \frac{ \int_{B_r} (dd^c \log \sigma)^{k-j+1} \wedge \phi^{*}(\omega^{j-1})}{\int_{B_r} u_r(dd^c\log \sigma)^{k-j} \wedge \phi^{*}(\omega^j)}.
\end{equation}
\begin{definition}
	\label{def:ddcMR}
	We say that $\phi, \sigma, \omega$ \emph{satisfy condition $dd^c$-MR} if 
	\begin{equation}
		\label{eqn:ddcMR0}
		\liminf J_j(r) = 0.
	\end{equation}
\end{definition}

As in (\ref{eqn:tkdef}), call the numerator in (\ref{eqn:defddcMR}) $t_{j-1}(r)$ and the denominator $T_j(r)$. Thus $T_j(r) = c_{j,r} = \int_{B_r} \log^+ \frac{r}{\sigma} \,(dd^c \log \sigma)^{k-j} \wedge \phi^*(\omega^k) = \int_0^r \frac1s t_j(s) \, ds,$ as in (\ref{eqn:Tkdef}), and so things simplify to:
\begin{equation}
	\label{eqn:defddcMRbis}
	J_j(r) = \frac{t_{j-1}(r)}{T_j(r)}.
\end{equation}
\begin{theorem}
\label{thm:ddc-closed}
Suppose $\phi, \sigma, \omega, r$ satisfy condition $dd^c$-MR, and $Y$ is a compact k\"ahler manifold. Then any cluster point $S_{\infty}$ of $S_r/c_r$ is a positive $dd^c$-closed current supported on $\overline{\phi(X)}$. Furthermore, 
\begin{equation}
	\label{eqn:vitesseddc}	
	\frac{1}{c_r} |\left<dd^c S_r, \psi\right>| \leq C \|\psi\|_{\infty} \, \frac{t_{j-1}(r)}{T_j(r)}, 
\end{equation}
for any bounded test form $\psi$ of bidegree $(j-1, j-1)$, where the constant $C > 0$ is independent of $r, \psi, \phi$.
\end{theorem}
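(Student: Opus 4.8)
The plan is to mimic the $d$-closed case of Section~\ref{sec:d-closed}, but now estimating $dd^c S_r$ instead of $dS_r$, and to track the constant carefully so as to get the quantitative bound \eqref{eqn:vitesseddc}. First I would reduce, exactly as in \eqref{eqn:decomp}--\eqref{eqn:decompbounds}, to the case where the test $(j-1,j-1)$-form $\psi$ is real and can be written as a bounded sum of terms $\beta_\nu^{j-1}$ with $0\le\beta_\nu\le\omega$; it suffices to bound $\langle dd^cS_r,\psi\rangle$ for $\psi=\beta^{j-1}$ with $0\le\beta\le\omega$. Then, writing $u_r=\chi(v_r)$ with $v_r=\log(r/\sigma)$ and $\chi=\max(t,0)$, I integrate by parts twice. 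Since $dd^c\chi(v_r)=\chi''(v_r)\,dv_r\wedge d^cv_r+\chi'(v_r)\,dd^cv_r$ and $dd^cv_r=-dd^c\log\sigma$, the $\chi''$ term is a negative measure supported on $\{\sigma=r\}$ (of Lelong--Jensen type) while $\chi'(v_r)=\mathbf 1_{\{\sigma<r\}}$. Pairing against $(dd^c\log\sigma)^{k-j}\wedge\phi^*(\beta^{j-1})$ and moving the $dd^c$ onto the weight, one gets
$$\langle dd^cS_r,\psi\rangle=\int_{B_r}(dd^c u_r)\wedge(dd^c\log\sigma)^{k-j}\wedge\phi^*(\beta^{j-1})+(\text{boundary terms that cancel}),$$
and the key point is that $dd^cu_r+\mathbf 1_{B_r}(dd^c\log\sigma)$ is, up to the singular boundary piece, controlled in mass by $\int_{B_r}(dd^c\log\sigma)^{k-j+1}\wedge\phi^*(\omega^{j-1})=t_{j-1}(r)$, after replacing $\beta^{j-1}$ by $\omega^{j-1}$ using $0\le\beta\le\omega$ and positivity of all the forms involved. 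Dividing by $c_r=T_j(r)$ gives \eqref{eqn:vitesseddc} with a constant $C$ coming only from the combinatorial bound on the number of terms $N$ in \eqref{eqn:decomp} and the constant in \eqref{eqn:decompbounds}, hence independent of $r,\psi,\phi$.

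Given the estimate \eqref{eqn:vitesseddc}, the remaining assertions are soft. Under condition $dd^c$-MR there is a sequence $r_\ell$ with $J_j(r_\ell)=t_{j-1}(r_\ell)/T_j(r_\ell)\to 0$; the currents $S_{r_\ell}/c_{r_\ell}$ have mass $1$, so by weak-$*$ compactness of positive currents on the compact manifold $Y$ a subsequence converges to a positive current $S_\infty$ of mass $1$, and \eqref{eqn:vitesseddc} forces $\langle dd^cS_\infty,\psi\rangle=\lim\langle dd^cS_{r_\ell}/c_{r_\ell},\psi\rangle=0$ for every smooth (indeed every bounded) $\psi$, i.e. $S_\infty$ is $dd^c$-closed. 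That $S_\infty$ is supported on $\overline{\phi(X)}$ is immediate from the defining formula \eqref{eqn:defSddc}, since each $S_r$ is, being a pushforward under $\phi$ of a current on $B_r\subset X$; a test form vanishing near $\overline{\phi(X)}$ pulls back to $0$.

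The main obstacle I anticipate is the bookkeeping at the singular locus $\{\sigma=r\}$: $u_r$ is merely continuous and $dd^cu_r$ is a sum of an $L^1_{loc}$ part and a measure carried on $\partial B_r$, so one must justify the two integrations by parts on a set where $(dd^c\log\sigma)^{k-j}$ is itself only a closed positive current, not smooth. The clean way is a regularization of $u_r$ (as flagged in Remark~\ref{rmk:flex}): replace $\chi$ by smooth convex decreasing approximants $\chi_\varepsilon\downarrow\chi$, carry out the identities for $\chi_\varepsilon(v_r)$ where everything is legitimate by Stokes on the smooth manifold away from the (measure-zero, by Sard applied to $\log\sigma$) critical set, and pass to the limit $\varepsilon\to0$ using that $\chi_\varepsilon'$ is uniformly bounded and monotone convergence for the $\chi_\varepsilon''$ term. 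One must also check that the boundary contributions produced by the two integrations by parts genuinely cancel — this is the higher-dimensional analogue of the Jensen formula computation already carried out in \eqref{eqn:lambints}, and is where the specific choice $v_r=\log(r/\sigma)$, for which $v_r\equiv 0$ on $\partial B_r$, does the work. Once the regularized identity is in place, the inequality $0\le\beta\le\omega$ plus positivity reduces the right-hand side to $C\,t_{j-1}(r)$ with no derivatives appearing, which is exactly the advantage over the $d$-closed mass ratio $I_j$ and over the hypotheses of \cite{hdt}.
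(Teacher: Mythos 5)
Your architecture coincides with the paper's: regularize $\chi$, move $dd^c$ onto the compactly supported weight, split $dd^cu_{\delta,r}=\chi_\delta'(v_r)\,dd^cv_r+\chi_\delta''(v_r)\,dv_r\wedge d^cv_r$, dominate $\phi^*(\psi)$ by $C\|\psi\|_\infty\,\phi^*(\omega^{j-1})$, and note that the $\chi'$ piece pairs to give exactly $t_{j-1}(r)$; the soft deductions (mass-$1$ weak-$*$ compactness, support in $\overline{\phi(X)}$) are also as in the paper. The gap is your treatment of the $\chi''$ piece. It is a \emph{positive} current (not a negative measure: $\chi$ is convex and $dv_r\wedge d^cv_r\geq 0$), and it does \emph{not} cancel. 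As $\delta\to 0$ it converges to the boundary current $\Theta\mapsto\int_{\partial B_r}d^c\log\sigma\wedge\Theta$, whose pairing with the closed form $(dd^c\log\sigma)^{k-j}\wedge\phi^*(\omega^{j-1})$ equals, by Stokes, $\int_{B_r}(dd^c\log\sigma)^{k-j+1}\wedge\phi^*(\omega^{j-1})=t_{j-1}(r)$ --- i.e.\ it is of exactly the same order as the $\chi'$ piece, not zero. The vanishing of $v_r$ on $\partial B_r$ kills the boundary terms produced by integrating by parts against the weight, but it does nothing to the singular part of $dd^cu_r$ itself; and the analogue of \eqref{eqn:lambints} you invoke computes $\int_{B_r}dv_r\wedge d^cv_r\wedge\Theta$ over the whole ball, whereas what must be controlled here is the concentrated quantity $\frac1\delta\int_{\{0<v_r<\delta\}}dv_r\wedge d^cv_r\wedge\phi^*(\psi)$.

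The paper supplies exactly the missing estimate (its term $I_2$, equations \eqref{eqn:I2}--\eqref{eqn:I2fin}): apply Stokes on the thin shell to rewrite $\frac1\delta\int_{\mathrm{shell}}d\log\sigma\wedge d^c\log\sigma\wedge\cdots$ as $\frac1\delta\left[\log(r+\delta)\,t_{j-1}(r+\delta)-\log r\,t_{j-1}(r)\right]$ minus $\frac1\delta\int_{\mathrm{shell}}\log\sigma\,dd^c\log\sigma\wedge\cdots$, evaluate the latter by the mean value theorem, and conclude the whole term is at most $C\|\psi\|_\infty\,\frac1r\,t_{j-1}(r+\delta)$, hence bounded by the same mass ratio after dividing by $c_r=T_j(r)$ and letting $\delta\to0$. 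Without this step (or the Stokes identity above identifying the limiting boundary mass as $t_{j-1}(r)$), you have only bounded half of $dd^cS_r$ and \eqref{eqn:vitesseddc} is not established. The rest of your proposal is sound.
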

\begin{proof}
We would like to get estimates on $\frac1{c_r} \left<dd^c S_r, \psi \right>$. To do so, we will first smooth out the function $\chi$. For $r > 0,$ let $v_r = \log \frac{r}{\sigma},$ and for each $\delta > 0$, let $u_{\delta, r} = \chi_{\delta}(v_r)$, where $\chi_{\delta}$ is a convex, increasing function which is $\equiv 0,$ on $(-\infty, 0)$, and $\chi''(s) = \frac1{\delta} \chi_{[0, \delta]},$ where $\chi_{[0,\delta]}$ is the characteristic function of $[0,\delta]$, and $\delta$ will tend to 0 later. 
We write the proof out only in the case $j= k$, the others being completely similar. We set $c_{k,r} = c_r,$ and suppress the index $\delta$ on $\chi_{\delta}$ for the moment.
\begin{equation}
\label{eqn:ddcSt}
\aligned
\frac1{c_r} \left<dd^cS_r, \beta \right> &= \frac1{c_r} \int_X dd^c(\chi(v_r)) \wedge \phi^{*}(\psi) \\
&= \frac1{c_r} \int_X (\chi'(v_r) dd^c v_r  \wedge \phi^{*}(\psi) + \chi''(v_r) dv_r \wedge d^c v_r) \wedge \phi^{*}(\psi)  \\
&= \frac1{c_r} \int_X -\chi'(v_r) dd^c \log \sigma \wedge \phi^{*}(\psi)  \\
& \; \; \; \; + \frac1{c_r} \int_X \chi''(v_r) d\log \sigma \wedge d^c\log \sigma \wedge \phi^{*}(\psi) \\
&= I_1 + I_2.
\endaligned
\end{equation}

Looking first at $I_1$, we remark that there is a constant $C > 0$, independent of $\psi, \phi$ such that $|\phi^*(\psi)| \leq C \|\psi\|_{\infty} \,\phi^*(\omega^{k-1}).$ Since $0 \leq \chi' \leq 1$, we get
\begin{equation}
	\label{eqn:I1}
	|I_1| \leq C \|\psi\|_{\infty} \frac1{c_r} \, \int_{B_r} dd^c \log \sigma \wedge \phi^*(\omega^{k-1}) = C \|\psi\|_{\infty} \frac{t_{k-1}(r)}{T_k(r)}.
\end{equation}

Passing to $I_2$, we see
\begin{equation}
	\label{eqn:I2}
	\begin{array}{rcl}
	T_k(r) |I_2| & = & \frac{1}{\delta} \; \, |\int_{\{r < \sigma < r + \delta\}} d\log \sigma \wedge d^c \log \sigma \wedge \phi^*(\psi) | \\
			&	&	\\
			  & \leq & \frac{C \|\psi\|_{\infty}}{\delta} \int_{\{r < \sigma < r + \delta\}} d\log \sigma \wedge d^c\log \sigma \wedge \phi^*(\omega^{k-1}) \\
			  &	&	\\
			  & = & \frac{C \|\psi\|_{\infty}}{\delta} \int_{\{r < \sigma < r + \delta\}} [d(\log \sigma \, d^c \log \sigma) - \log \sigma \, dd^c \log \sigma] \wedge \phi^*(\omega^{k-1}) \\
			  &	&	\\
			  & = & \frac{C \|\psi\|_{\infty}}{\delta} [\log(r+\delta) \; t_{k-1}(r+\delta) - \log r \; t_{k-1}(r)]  \\
			  &	&	\\
			  & 	& \;\;\;\; - \frac{C \|\psi\|_{\infty}}{\delta} \, \int_{\{r < \sigma < r+\delta\}} \log \sigma \; dd^c \log \sigma \wedge \phi^*(\omega^{k-1}) 
			  \end{array}
\end{equation}
We next examine the right hand term in the last line more closely: 
\begin{equation}
	\label{eqn:mvtint}
	\int_{\{r < \sigma < r+\delta\}} \log \sigma \, dd^c\log \sigma \wedge \phi^*(\omega^{k-1}) = \; \log(r + \alpha) [t_{k-1}(r+\delta) - t_{k-1}(r)],
\end{equation}
for some $\alpha \in (0, \delta)$, by the mean value theorem. Hence, resuming from (\ref{eqn:I2}) we get
\begin{equation}
	\label{eqn:I2bis}
	\begin{array}{rcl}
	T_k(r) \, I_2 & \leq & \frac{C \|\psi\|_{\infty}}{\delta} [\log(r+\delta) \, t_{k-1}(r+\delta) - \log r \; t_{k-1}(r)]  \\
		&	&	\\
		&	&\;\;\;\;  - \frac{C \|\psi\|_{\infty}}{\delta} [\log r (t_{k-1}(r+\delta) - t_{k-1}(r))] \\
		&	&	\\
		& \leq & \frac{C \|\psi\|_{\infty}}{\delta} [\log(1 + \frac{\delta}{r}) t_{k-1}(r+\delta)] \\
		&	&	\\
		& \leq &  C \|\psi\|_{\infty} \, \frac1{r} \,t_{k-1}(r+\delta),
	\end{array}
\end{equation}
where we have used $\log r < \log (r + \alpha)$ in the first line, and $\log (1 + y) \leq y, y > 0,$ in the last step. Since $\delta > 0$ was arbitrary, we conclude
\begin{equation}
	\label{eqn:I2fin}
	|I_2| \leq \frac1r \, C \|\psi\|_{\infty} \, \frac{t_{k-1}(r)}{T_k(r)}.
\end{equation}
Together, (\ref{eqn:I1}) and (\ref{eqn:I2fin}) show
\begin{equation}
	\label{eqn:vitesse}
	\frac1{c_r} |\left<dd^c S_r, \psi\right>| \leq C' \|\psi\|_{\infty} \, \frac{t_{k-1}(r)}{T_k(r)}.
\end{equation}
Applying this inequality gives the proof of the theorem.
\end{proof}

Before going on to analyze the $dd^c$-mass ratios $J_j(r)$, let us remark that one can also construct some $d$-closed cluster currents using the weight $u_r = \log^+\frac{r}{\sigma}$. 

\begin{theorem}
	\label{thm:logdclosed}
	Suppose 
	\begin{equation}
		\label{eqn:dlogcond}
		\liminf_{r \rightarrow R^-} \;\; \log r \, \frac{t_{j-1}(r) t_j(r)}{(T_j(r))^2} = 0.
	\end{equation}
	Then there exist positive d-closed cluster currents of mass 1 for $S_{j,r}/c_{j,r}$. Note that we use $u_r = \log^+ \frac{r}{\sigma}$ for the definition of the $S_{j,r}$.
\end{theorem}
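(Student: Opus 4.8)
The plan is to mimic the estimate of $dS_r$ carried out in section \ref{sec:d-closed}, but now with the logarithmic weight $u_r = \chi(v_r) = \log^+\frac{r}{\sigma}$ where $v_r = \log\frac{r}{\sigma}$, so that $dv_r = -d\log\sigma$ and $dv_r\wedge d^cv_r = d\log\sigma\wedge d^c\log\sigma$. As in \eqref{eqn:decomp}--\eqref{eqn:decompbounds} it suffices to bound $|\langle dS_r,\psi\rangle|$ for $\psi = \theta\wedge\beta^{j-1}$ with $\theta$ a $(1,0)$-form, $\beta$ kähler, $0\leq\beta\leq\omega$, $\frac{i}{2}\theta\wedge\bar\theta\leq C\|\psi\|_\infty^2\omega$. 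Writing $dS_r = \chi'(v_r)\,dv_r\wedge(dd^c\log\sigma)^{k-j}\wedge\phi^*(\theta)\wedge\phi^*(\beta^{j-1})$ and applying the Cauchy--Schwarz inequality exactly as in \eqref{eqn:SchwS}--\eqref{eqn:estdS}, one obtains
\begin{equation}
	\label{eqn:logSchw}
	|\langle dS_r,\psi\rangle| \leq C\|\psi\|_\infty\Bigl(\int_{B_r} dv_r\wedge d^cv_r\wedge(dd^c\log\sigma)^{k-j}\wedge\phi^*(\omega^{j-1})\Bigr)^{1/2}\Bigl(\int_{B_r}(dd^c\log\sigma)^{k-j}\wedge\phi^*(\omega^j)\Bigr)^{1/2}.
\end{equation}
The second factor is just $t_j(r)^{1/2}$.

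Next I would identify the first factor under the integral sign. Since $dv_r\wedge d^cv_r = d\log\sigma\wedge d^c\log\sigma$, integrating by parts (using $d(\log\sigma\, d^c\log\sigma) = d\log\sigma\wedge d^c\log\sigma + \log\sigma\, dd^c\log\sigma$) over $B_r = \{\sigma<r\}$ gives, in the spirit of the computation \eqref{eqn:lambints} but for the logarithmic exhaustion,
\begin{equation}
	\label{eqn:logfirstfactor}
	\int_{B_r} dv_r\wedge d^cv_r\wedge(dd^c\log\sigma)^{k-j}\wedge\phi^*(\omega^{j-1}) = \log r\cdot t_{j-1}(r) - \int_{B_r}\log\sigma\,(dd^c\log\sigma)^{k-j+1}\wedge\phi^*(\omega^{j-1}),
\end{equation}
and since $\log\sigma\geq 0$ (using convention \eqref{eqn:convention}) the last term is nonnegative, so the first factor is $\leq \log r\cdot t_{j-1}(r)$. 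Combining with \eqref{eqn:logSchw} and dividing by $c_r = c_{j,r} = T_j(r)$ yields
\begin{equation}
	\label{eqn:logmassratio}
	\Bigl|\frac{\langle dS_r,\psi\rangle}{c_r}\Bigr|^2 \leq C\|\psi\|_\infty^2\,\frac{\log r\cdot t_{j-1}(r)\cdot t_j(r)}{T_j(r)^2},
\end{equation}
which is precisely $C\|\psi\|_\infty^2$ times the quantity in \eqref{eqn:dlogcond}. Choosing a sequence $r_\ell\to R^-$ along which $\liminf$ is attained, the right-hand side tends to $0$, so $\langle dS_{r_\ell}/c_{r_\ell},\psi\rangle\to 0$ for all bounded $\psi$; by weak compactness of positive currents of mass $1$ a subsequence converges to a positive current $S$, which is then $d$-closed of mass $1$. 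This last weak-compactness and normalization argument is identical to that behind Theorem \ref{thm:1stdlimthm}.

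The main obstacle is the sign bookkeeping in the integration by parts \eqref{eqn:logfirstfactor}: one must verify there is no boundary term obstruction (the $\partial B_r$ term produces exactly the $\log r\cdot t_{j-1}(r)$ piece because $\log\sigma\equiv\log r$ on $\partial B_r$, and there is no contribution from a possible singular locus or from $r_0$ once convention \eqref{eqn:convention} is in force), and that $(dd^c\log\sigma)^{k-j+1}\wedge\phi^*(\omega^{j-1})$ is a positive measure so that dropping it only improves the bound — this uses plurisubharmonicity of $\log\sigma$ and positivity of $\phi^*\omega^{j-1}$ as in the standard theory of positive closed currents. A minor point is that for $j<k$ the same Cauchy--Schwarz reduction via \eqref{eqn:decomp} applies verbatim with $(dd^c\log\sigma)^{k-j}$ inserted throughout, which is why writing out $j=k$ suffices. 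Everything else is a routine repetition of the section \ref{sec:d-closed} machinery with $u_r = \log^+\frac{r}{\sigma}$ in place of $(1-\tau/r)^+$.
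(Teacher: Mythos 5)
Your proposal is correct and follows essentially the same route as the paper: the same Cauchy--Schwarz reduction to $\psi = \theta\wedge\beta^{j-1}$, the same integration by parts $d(\log\sigma\,d^c\log\sigma) = d\log\sigma\wedge d^c\log\sigma + \log\sigma\,dd^c\log\sigma$ with the boundary term evaluated as $\log r\cdot t_{j-1}(r)$, and the same use of $\log\sigma\geq 0$ to discard the remaining term, yielding the bound $C\|\psi\|_\infty\bigl(\log r\,t_{j-1}(r)t_j(r)/T_j(r)^2\bigr)^{1/2}$. The only difference is cosmetic (you simplify the gradient factor before squaring rather than after).
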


\begin{proof}
We will just write out the case $j = k$. It suffices to estimate $\left<dS_r, \psi\right>$ with $\psi = \theta \wedge \beta^{k-1}$, as in (\ref{eqn:decomp}), where $\theta$ is a $(1,0)$-form and $\beta$ an arbitrary K\"ahler form, with bounds as in (\ref{eqn:decompbounds}). Then we have to estimate $\left<\bar{\partial}S_r, \theta \wedge \beta^{k-1}\right>$. As in the proof of (\ref{eqn:massratios}), we get
\begin{equation}
	\label{equation}
	\begin{array}{rcl}
	\frac1{c_r}|\left<\bar{\partial}S_r, \theta \wedge \beta^{k-1}\right>| & \leq &\frac1{c_r} \left(\int_{B_r} d\log \sigma \wedge d^c \log \sigma \wedge \phi^*(\beta^{k-1})\right)^{\frac12} \\
		&	& \;\;\;\;\;\; \times \left(\int_{B_r} \phi^*(\beta^{k-1}) \wedge \theta \wedge \bar{\theta})\right)^{\frac12}. 
	\end{array}
\end{equation}
The second term on the right is bounded by $C \|\psi\|_{\infty} t_k(r)^{\frac12}$. Squaring, we get 
$$\frac1{c_r^2} |\left<dS_r, \theta \wedge \beta^{k-1}\right>|^2 \leq C^2 \frac{\|\psi\|^2_{\infty}}{c_r^2} t_k(r) \int_{B_r} d\log \sigma \wedge d^c \log \sigma \wedge \phi^*(\beta^{k-1})$$
$$ = C^2 \frac{\|\psi\|^2_{\infty}}{c_r^2} \; t_k(r) \int_{B_r} \left(d(\log \sigma \, d^c \log \sigma) - \log \sigma \, dd^c\log \sigma\right) \wedge\phi^*(\beta^{k-1})$$
We have assumed for convenience that $\log \sigma \geq 0$, so this last becomes
\begin{equation}
	\label{eqn:findlog}
	\begin{array}{rcl}
	\frac1{c_r^2} |\left<dS_r, \theta \wedge \beta^{k-1}\right>|^2 & \leq & C^2 \frac{\|\psi\|^2_{\infty}}{c_r^2} \; t_k(r) \int_{\partial B_r} \log \sigma \, d^c \log \sigma \wedge\phi^*(\beta^{k-1}) \\
	&	&	\\
	& = & C^2 \frac{\|\psi\|^2_{\infty}}{c_r^2} \; t_k(r) \log r \int_{B_r}  dd^c\log \sigma \wedge\phi^*(\beta^{k-1}) \\
	&	&	\\
	& = & C^2 \|\psi\|^2_{\infty} \; \log r \; \frac{t_{k-1}(r) \, t_k(r)}{T_k^2(r)}.
	\end{array}
\end{equation}
Hence, we finally obtain
\begin{equation}
	\label{eqn:estdlog}
	\frac1{c_r}|\left<dS_r, \psi\right>| \leq C \|\psi\|_{\infty}\, \left(\log r \, \frac{t_{k-1}(r) \, t_k(r)}{T^2_k(r)}\right)^{\frac12},
\end{equation}
which proves the Theorem \ref{thm:logdclosed}
\end{proof}

Observe that when the exhaustion is bounded the term $\log r$ on the right of (\ref{eqn:estdlog}) disappears.

To analyze $J(t)$ in a fashion similar to that of $I(t)$ in equations (\ref{eqn:MR1dibis}) to (\ref{eqn:MR1supdelta}), we start, for $0 \leq j \leq k$, from
\begin{equation}
	\label{eqn:B}
		T_j(r) = \int_0^r \frac{t_j(s)}{s} \, ds.
\end{equation}
Then
\begin{equation}
	\label{eqn:Bprime}
		r\,T'_j(r) = t_j(r).
\end{equation}
Since the denominator of $J_j(r)$ is just $T_j(r)$, we can write $J_j(t)$, using (\ref{eqn:Bprime}), as follows:
\begin{equation}
	\label{eqn:tildeJmanip}
		J_j(r) = \frac{t_{j-1}(r)}{T_j(r)} = \frac{t_{j-1}}{t_j} \cdot \frac{t_j(r)}{T_j(r)} = \frac{t_{j-1}}{t_j} \cdot \frac{r T'_j(r)}{T_j(r)}.
\end{equation}
If there is no subsequence $r_{\ell} \rightarrow \infty$ such that $J_j(r_{\ell}) \rightarrow 0$, then there is a $c > 0$ such that $J_j(r) \geq c$ for all $r$. We have therefore 
\begin{equation}
	\label{eqn:MR2di}
	\frac{T'_j}{T_j} \geq \frac 1c \; \frac{t_j}{r \; t_{j-1}},
\end{equation}
which we integrate over the interval $[r_0, r]$ to obtain
\begin{equation}
	\label{eqn:MR2diint}
		\log T_j(s) ]_{r_0}^r \geq \frac 1c \int_{r_0}^r \frac{t_j(s)}{s\; t_{j-1}(s)} \, ds.
\end{equation}
We get that $J_j(r_{\ell}) \rightarrow 0$ for some subsequence $r_{\ell} \rightarrow R$ if 
\begin{equation}
	\label{eqn:MR2sup} 
		\limsup_{r < R} \frac{1}{\log T_j(r)} \cdot \int_{r_0}^r \frac{t_j(s)}{s \; t_{j-1}(s)} \, ds = +\infty, 
\end{equation}
provided, in the case that $R < \infty$, that $\log T_j(r) > 0$ for some $r \in [0, R]$. Then, arguing as in the proof of corollary \ref{cor:MR1delta}, we conclude the following corollary of Theorem \ref{thm:ddc-closed}. Note that the condition (\ref{eqn:MR2sup}) can be interpreted as saying that the relative growth of $\frac{t_j}{t_{j-1}}$ is large enough.

\begin{corollary}
	\label{cor:MR2sup}
		If (\ref{eqn:MR2sup}) holds, then $\omega, \phi, \sigma$ satisfy $dd^c$-$MR$.
\end{corollary}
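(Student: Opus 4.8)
The plan is to argue by contradiction, mirroring the passage from (\ref{eqn:MR1dibdd}) to Corollary \ref{cor:MR1delta} in the $d$-closed setting. Suppose $\omega, \phi, \sigma$ do \emph{not} satisfy condition $dd^c$-MR; then $\liminf_{r\to R} J_j(r) > 0$, so there is a constant $c > 0$ with $J_j(r) \geq c$ for all $r$ close enough to $R$. This is exactly the hypothesis under which inequalities (\ref{eqn:MR2di}) and its integrated form (\ref{eqn:MR2diint}) were derived, so I would invoke (\ref{eqn:MR2diint}) directly: for all such $r$,
\[
\int_{r_0}^r \frac{t_j(s)}{s\, t_{j-1}(s)}\, ds \;\leq\; c\bigl(\log T_j(r) - \log T_j(r_0)\bigr).
\]

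Next I would divide through by $\log T_j(r)$. By the proviso in the statement, $\log T_j(r) > 0$ for $r$ near $R$; since $r\,T'_j(r) = t_j(r) \geq 0$ the function $T_j$, hence $\log T_j$, is non-decreasing, so in fact $\log T_j$ is bounded below by a positive constant on a neighbourhood of $R$. Dividing therefore yields
\[
\frac{1}{\log T_j(r)} \int_{r_0}^r \frac{t_j(s)}{s\, t_{j-1}(s)}\, ds \;\leq\; c \;-\; \frac{c\,\log T_j(r_0)}{\log T_j(r)},
\]
whose right-hand side is bounded above uniformly in $r$ near $R$: if $\log T_j(r)\to\infty$ it tends to $c$, and otherwise $\log T_j$ is bounded (above and away from $0$), so the whole expression stays bounded. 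Hence the $\limsup$ in (\ref{eqn:MR2sup}) is finite, contradicting the hypothesis. Therefore condition $dd^c$-MR must hold, and Theorem \ref{thm:ddc-closed} then produces the positive $dd^c$-closed cluster currents.

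There is essentially no serious obstacle here; the only point to watch is the normalization by $\log T_j(r)$ and the treatment of the error term $c\,\log T_j(r_0)/\log T_j(r)$. One must confirm that $\log T_j$ is eventually positive — which is precisely the hypothesis flagged in the statement when $R < \infty$ — and then use the monotonicity of $T_j$ to keep the error term bounded, so that the upper bound on the normalized integral is uniform as $r\to R$. Everything else is a direct transcription of the manipulations already carried out in (\ref{eqn:MR2di})–(\ref{eqn:MR2sup}).
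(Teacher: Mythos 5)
Your argument is correct and is essentially the paper's own: the text preceding the corollary derives exactly (\ref{eqn:MR2di})--(\ref{eqn:MR2diint}) under the assumption $J_j(r)\geq c$, and concludes by noting that (\ref{eqn:MR2sup}) (with the proviso that $\log T_j$ is eventually positive) forces a contradiction. Your handling of the normalization by $\log T_j(r)$ and the bounded error term $c\,\log T_j(r_0)/\log T_j(r)$ fills in the same details the paper leaves to the reader by analogy with Corollary \ref{cor:MR1delta}.
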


If $R = +\infty$, we can draw some simple conclusions. If $k = \dim X = 1$, then 
\begin{equation}
	\label{eqn:MR2k1}
		J_1(r) = \frac{\int_{B_r} dd^c \log \sigma}{T_1(r)}.
\end{equation}
If $R = +\infty$, then $T_1(r) \gtrsim \log r$ as $r \rightarrow \infty$. If, in addition, $\sigma$ is a parabolic exhaustion of $X$, so that $\sigma$ is harmonic outside a compact subset of $X$, then by (\ref{eqn:MR2k1}) we get that any limit point $S_{\infty}$ of $S_r/c_r$ is a $dd^c$-closed positive current. If $\dim Y = 1$ also, then this must be a positive constant times the current $[Y]$ of integration on $Y$. 

As an illustration in a case where $R < +\infty$, consider $X = \mathbb{B}^1\subset \CC$, and $\sigma = |z|$. In this case, the condition that $\liminf_{r \rightarrow R} J(r) = 0$ is equivalent to 
\begin{equation}
	\label{eqn:k1R1cond}
		\int_{\mathbb{B}^1} (1-|z|) |\phi'(z)|^2 \, d\lambda(z) = +\infty,
\end{equation}
which can also be written as $\int_0^1 t_1(s) ds = +\infty.$ This condition was considered in \cite{fs} in connection with the study of laminations. For domain $\BB^k, k$ arbitrary, one would need the condition
$$\lim_{r \rightarrow 1^-} \frac{\int_{\BB^k_r} (1-\|z\|) \phi^*(\omega^k)}{\int_{\BB^k_r} (dd^c \log \|z\|) \wedge \phi^*(\omega^{k-1})} = +\infty.$$
\begin{remark}
	\label{rem:locddc}
	These last results may be localized. For example, if $\phi_n: \BB^k \rightarrow Y$ is a sequence of holomorphic maps such that 
	\begin{equation}
		\label{eqn:ddclocint}
		R(\phi_n) := \frac{\int_{\BB^k_1} (1 - \|z\|) \phi^*_n(\omega^k)}{\int_{\BB^k_1} (dd^c \log \|z\|) \wedge \phi_n^*(\omega^{k-1})} \rightarrow +\infty,
	\end{equation}
	as $n \rightarrow +\infty$, then the corresponding currents have among their clusterpoints a $dd^c$-closed positive current of bidemension $(k,k)$ and mass 1.
	
	It is interesting to compare the criteria obtained here and in Theorem \ref{thm:ddc-closed}. Let $\phi:\triangle \rightarrow Y$ be a holomorphic map. By Theorem \ref{thm:ddc-closed} we obtain a $dd^c$-closed current if 
	$$J(r) = \int_{|\zeta| < r} (r-|\zeta|)^+ |\phi'(\zeta)|^2 d\lambda(\zeta) \rightarrow +\infty,$$
	 while we get a $d$-closed current if 
	 $$\frac{\int_{|\zeta| < r} |\phi'(\zeta)|^2 d\lambda(\zeta)}{J(r)^2} \rightarrow 0.$$
\end{remark}
	
The techniques developed in this section may be applied to study the intersection of the $dd^c$-closed and positive currents constructed in this section with hypersurfaces in $Y$. Let $Z \subset Y$ be a hypersurface such that $\phi(X)$ is {\em not contained in} $Z$. Let $[Z]$ denote the current of integration over the hypersurface, and $\{Z\}$ the cohomology class of $Z$, of bidegree $(1,1)$. $\{T\}$ denotes the cohomology class of bidegree $(m-1,m-1)$ determined by $T$ of bidimension $(1,1)$. We use here that $Y$ is compact and K\"ahler. The $dd^c$-lemma on such varieties then gives the class $\{T\}$  by duality.

\begin{theorem}
	\label{thm:posint}
	Notation as above, we have
	
	\noindent 1.) if $X$ is parabolic, then 
	\begin{equation}
		\label{eqn:intpos1}
		\left<\{T\}, \{Z\}\right> \geq 0.
	\end{equation}
	
	\noindent 2. if $(dd^c \log \sigma)^k = 0$ outside a compact set in X, and
	\begin{equation}
		\label{eqn:T1cond}
		\lim_{r \rightarrow R} \frac{t_0(r)}{T_1(r)} = 0,
	\end{equation}
	then $\left<\{T\}, \{Z\}\right> \geq 0.$
\end{theorem}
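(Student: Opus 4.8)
The plan is to realize the pairing $\langle\{T\},\{Z\}\rangle$ as a limit of intersections of the approximating currents $S_r/c_r$ with $[Z]$, and then to show each such intersection is essentially nonnegative up to an error controlled by the mass-ratio bound (\ref{eqn:vitesseddc}). First I would pick a defining quasi-psh function $g$ for $Z$, i.e. a function with $dd^c g = [Z] - \theta_Z$ where $\theta_Z$ is a smooth closed $(1,1)$-form in the class $\{Z\}$; such $g$ exists since $Y$ is compact Kähler and we are using the $dd^c$-lemma. Then, for $T$ a cluster point of $S_{r_\ell}/c_{r_\ell}$ (with $J_1(r_\ell)\to 0$, which is guaranteed in case 1 by parabolicity via Corollary \ref{cor:k1par}-type reasoning and in case 2 directly by hypothesis (\ref{eqn:T1cond})), one has $\langle\{T\},\{Z\}\rangle = \langle T,\theta_Z\rangle = \lim_\ell \langle S_{r_\ell}/c_{r_\ell},\theta_Z\rangle$, the last equality because $\theta_Z$ is a smooth form.

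Next I would compute $\langle S_r/c_r,\theta_Z\rangle$ by writing $\theta_Z = [Z]-dd^c g$ and pulling back by $\phi$: since $\phi(X)\not\subset Z$, the pullback $\phi^*[Z]$ is a well-defined positive current (the divisor $\phi^*Z$), so $\langle S_r,[Z]\rangle = \int_{B_r} u_r\,(dd^c\log\sigma)^{k-j}\wedge\phi^*[Z]\geq 0$. The remaining term is $-\langle S_r, dd^c g\rangle$, and here is where the second half of Theorem \ref{thm:ddc-closed} does the work: $|\langle S_r, dd^c g\rangle| = |\langle dd^c S_r, g\rangle|$, and although $g$ is not bounded (it has a $-\infty$ logarithmic pole along $Z$), it is bounded above and $L^1$, so after a truncation/regularization $g_N=\max(g,-N)$ one applies (\ref{eqn:vitesseddc}) to the bounded forms built from $g_N$ to get $|\langle dd^c S_r, g_N\rangle|/c_r \leq C\|g_N\|_\infty\, t_{j-1}(r)/T_j(r)$; then let $N\to\infty$ controlling the error using positivity of $\phi^*[Z]$ near its support. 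Dividing by $c_r$, taking $r=r_\ell\to R$, and using $J_j(r_\ell)=t_{j-1}(r_\ell)/T_j(r_\ell)\to 0$, the $dd^cg$ term vanishes in the limit, leaving $\langle\{T\},\{Z\}\rangle = \lim_\ell \frac{1}{c_{r_\ell}}\langle S_{r_\ell},[Z]\rangle \geq 0$.

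For case 1 (parabolic $X$) the relevant mass ratio is the bidimension-$(1,1)$ one and $J_1(r)\to 0$ should follow from $t_0(r)$ being eventually constant (as in the proof of Corollary \ref{cor:k1par}) against $T_1(r)\gtrsim\log r\to\infty$; for case 2 the hypothesis (\ref{eqn:T1cond}) is exactly $J_1(r)\to 0$. In both cases Theorem \ref{thm:ddc-closed} supplies the $dd^c$-closed limit current and the quantitative bound, so the argument is uniform.

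The main obstacle I expect is the unboundedness of the potential $g$ along $Z$: the clean estimate (\ref{eqn:vitesseddc}) is stated only for bounded test forms, so one must justify that the contribution of the logarithmic singularity of $g$ (equivalently, the mass of $S_r$ concentrated near $\phi^{-1}(Z)$, which is where the truncation error lives) does not spoil the limit. This requires a careful interplay between the truncation level $N$ and the radius $r$, using that $\int_{B_r}u_r(dd^c\log\sigma)^{k-j}\wedge\phi^*[Z]$ is controlled by $c_r$ times something bounded (a Nevanlinna-type counting/proximity estimate), so that the error term $\|g_N\|_\infty t_{j-1}(r)/T_j(r) = N\cdot J_j(r)$ can be made to vanish by choosing $N=N(r)\to\infty$ slowly enough that $N(r)J_j(r)\to 0$ along the subsequence $r_\ell$. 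Making this choice rigorous — essentially a proximity-term estimate in the First Main Theorem, which the authors advertise as a consequence of the second half of Theorem \ref{thm:ddc-closed} — is the technical heart of the proof.
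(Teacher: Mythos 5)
Your skeleton matches the paper's: write $[Z]-\alpha = dd^c U$ with $\alpha$ smooth and $U\leq 0$ quasi-psh, use positivity of $\left<S_r,[Z]\right> = \int_{B_r\cap\phi^{-1}(Z)}u_r(dd^c\log\sigma)^{k-1}\geq 0$, and then control the remaining term $-\left<dd^cS_r,U\right>$. But the step you yourself flag as "the technical heart" --- handling the unbounded potential --- is exactly where your proposal stops short, and it is also where the paper does something different from what you propose. The paper does \emph{not} truncate $U$ and does \emph{not} invoke the bounded-test-form estimate (\ref{eqn:vitesseddc}) at all. Instead, after integrating by parts with the smoothed weight $\chi_\delta$, the term $-\left<dd^cS_{\delta,r},U\right>$ splits into $I_1 = \int_{B_r}\chi_\delta'(dd^c\log\sigma)^k\,U\circ\phi$ and $I_2 = -\frac1\delta\int_{\{r<\sigma<r+\delta\}}d\log\sigma\wedge d^c\log\sigma\wedge(dd^c\log\sigma)^{k-1}\,U\circ\phi$. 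The sign $U\leq 0$ gives $I_2\geq 0$ for free, and $I_1$ is bounded below by a constant $-C$ independent of $r$ because $(dd^c\log\sigma)^k$ is compactly supported (this is precisely where the parabolicity hypothesis, or hypothesis 2, enters) and $U\circ\phi$ is quasi-psh, hence locally integrable. Then $\left<S_r,\alpha\right>\geq -C$ and one simply divides by $c_r=T_1(r)\to\infty$. So the hypotheses of the theorem are used to make a single $r$-independent integral finite, not to run the mass-ratio estimate against $\|U\|_\infty$.

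Your truncation route can in fact be closed, but not by the diagonal choice $N(r)\to\infty$ with $N(r)J_1(r)\to 0$ that you propose; as written, that controls only $\left<dd^cS_r,g_N\right>$ and leaves the error $\left<dd^cS_r,g-g_N\right>$ dependent on an unproved proximity estimate. The missing observation is that $g_N=\max(g,-N)$ is itself $\theta_Z$-plurisubharmonic, so $dd^cg_N+\theta_Z$ is a positive closed $(1,1)$-current with \emph{bounded} potential; writing $\theta_Z = (dd^cg_N+\theta_Z)-dd^cg_N$ gives
\begin{equation*}
\frac{\left<S_r,\theta_Z\right>}{c_r} \;=\; \frac{\left<S_r,\,dd^cg_N+\theta_Z\right>}{c_r}\;-\;\frac{\left<dd^cS_r,\,g_N\right>}{c_r}\;\geq\; 0 - C\,N\,J_1(r),
\end{equation*}
and a single \emph{fixed} $N$ then suffices when $J_1(r_\ell)\to 0$; no interplay between $N$ and $r$ is needed. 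Note that this repaired argument only uses $J_1(r_\ell)\to 0$, whereas the paper's argument uses the compact support of $(dd^c\log\sigma)^k$ essentially; you should also verify, as the paper implicitly does, that parabolicity in case 1 really yields $t_0$ bounded and $T_1(r)\to\infty$ so that the relevant mass ratio tends to $0$.
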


\begin{proof} 

We have the equation of currents
\begin{equation}
	\label{eqn:PLformula}
	[Z] - \alpha = dd^c U,
\end{equation}
where $\alpha$ is a smooth $(1,1)$-form representing the class $\{Z\}$, and where $U$ can be assumed $\leq 0$ on $Y$, and $U\circ\phi$ is not identically $-\infty$. The pairing in the theorem is given by $\left<\{T\}, \{Z\}\right> := \left<T, \alpha\right> = \lim_{\ell \rightarrow \infty}\left<S_{r_{\ell}}, \alpha\right>/c_{r_{\ell}}.$
We now use the smoothings $\chi_{\delta}$ from the proof of Theorem \ref{thm:ddc-closed}, and set $u_{\delta, r_{\ell}} = \chi_{\delta}(\log \frac{r_{\ell}}{\sigma}).$ Note that $u_{\delta, r_{\ell}} \rightarrow u_{r_{\ell}}$ when $\delta \rightarrow 0$, and we set $\left<S_{\delta, r_{\ell}}, \alpha\right> = \int_{B_{r_{\ell}}} u_{\delta, r_{\ell}} (dd^c \log \sigma)^{k-1} \wedge \alpha. $ Thus, $\lim_{\delta \rightarrow 0^+} \left<S_{\delta, r_{\ell}}, \alpha\right> = \left<S_{r_{\ell}}, \alpha\right>.$  This said, we proceed to analyze $\left<S_{\delta, r_{\ell}}, \alpha \right>$:

\begin{equation}
	\label{eqn:poschain}
	\begin{array}{rcl}
	\left<S_{\delta,r_{\ell}}, \alpha\right> & = & \left<S_{\delta, r_{\ell}},  \alpha + dd^c U\right> - \left<S_{\delta, r_{\ell}}, dd^c U \right>\\
		&	&	\\
		& \geq & - \left<S_{\delta, r_{\ell}},  dd^cU \right>.
	\end{array}
\end{equation}
We have used here the obvious positivity inequality for the finite intersections
\begin{equation}
	\label{eqn:finitepart}
	\left<S_{\delta, r_{\ell}}, \alpha + dd^c U\right> = \int_{B_{r_{\ell}} \cap \phi^{-1}(Z)} u_{\delta, r_{\ell}}\; (dd^c\log \sigma)^{k-1} \geq 0,
\end{equation}
where $\phi^{-1}(Z)$ is counted with multiplicities. Now we use the fact that $u_{\delta, r_{\ell}}$ is compactly supported on $X$, and we integrate $dd^c$ by parts to get, as in the proof of Theorem \ref{thm:ddc-closed}: 
\begin{equation}
	\label{eqn:deltaIs}
	\begin{array}{rcl}
	- \left<S_{\delta, r_{\ell}}, dd^cU\right> & = & - \left< dd^c S_{\delta, r_{\ell}}, U\right> \\
		&	&	\\
		& = 	& \int_{B_{r_{\ell}}} \chi'_{\delta} (dd^c \log \sigma)^{k} \; U\circ\phi \\
		&	&	\\
		&	&	- \frac1{\delta}\int_{\{r < \sigma < r+\delta\}} d\log \sigma \wedge d^c\log \sigma \wedge (dd^c\log \sigma)^{k-1} \; U\circ\phi \\
		&	&	\\
		& = 	& I^{\delta, r_{\ell}}_1 + I^{\delta,r_{\ell}}_2.	
	\end{array}
\end{equation}

Now, $I^{\delta,r_{\ell}}_2 \geq 0,$ because $U \leq 0$, for any $\delta, r_{\ell}.$ As for $I^{\delta,r_{\ell}}_1$, suppose first that $X$ is parabolic. Then $(dd^c \log \sigma)^k$ is compactly supported on $X$, and we get, since $U \circ \phi$ is quasi-psh and hence locally integrable, 
$$\left<S_{r_{\ell}}, \alpha\right> \geq \int_{B_{r_{\ell}}} (dd^c\log \sigma)^k U \circ \phi \geq - C,$$
where $C$ is a positive constant, and then
$$\lim_{r_{\ell} \rightarrow \infty} \left<S_{r_{\ell}}, \alpha\right>/c_{r_{\ell}} \geq \lim_{r_{\ell} \rightarrow \infty} \frac{-C}{c_{r_{\ell}}} = 0,$$
since $c_{r_{\ell}} \rightarrow \infty$, for $X$ parabolic.

In the second case in the theorem, one still has $\int_X (dd^c \log \sigma)^k \; U\circ\phi$ bounded and  $c_{r_{\ell}} = T_1(r_{\ell}) \rightarrow \infty$.

\end{proof}

\begin{remark}
	\label{rmk:int}
	\noindent 1. When $X = \CC$ or a finite branched cover of $\CC$, the previous result is due to McQuillan \cite{mm}. It seems new even for the case $X$ parabolic of dimension 1. Note that, strictly speaking, McQuillan uses the average currents $\tilde{S}_r$ in remark \ref{rmk:flex}. The proof works the same in either case.

	\noindent 2. The result holds if we replace $\{Z\}$ by the class of a closed and positive current $R$ of bidimension $(n-1, n-1)$, provided we can write
	$$R = \alpha + dd^c U,$$
as above, where $\alpha$ is smooth, and $U\circ \phi$ is not identically $-\infty$.

	\noindent 3. If instead of a fixed map $\phi$, we suppose we have a sequence of maps $\phi_n: \triangle \rightarrow Y$ from the unit disk to $Y$.	Assume that there are sequences $n_{\ell}, r_{\ell}$ such that $n_{\ell} \rightarrow \infty,$ and $r_{\ell} \rightarrow 1^-$, and such that 
$$\lim_{\ell \rightarrow \infty} \; \frac{t_0(\phi_{n_{\ell}},r_{\ell})}{T_1(\phi_{n_{\ell}}, r_{\ell})} = 0,$$
and that $U \circ \phi_{n_{\ell}}$ does not converge to $-\infty$ uniformly. Then once again, any $dd^c$-closed cluster point $T$ of $S_{r_{\ell}}(\phi_{n_{\ell}})/c_{r_{\ell}}(\phi_{n_{\ell}})$ will verify $\left<T,\alpha\right> \geq 0.$ This is because we still have $\int_X (dd^c \log \sigma)^k \; U\circ\phi_{n_{\ell}}$ bounded.

	\noindent 4. If the hypersurface $Z$ is an ample divisor on $Y$, then we get $\left<\{T\},\{Z\}\right> = \left<T, \alpha\right> > 0,$ because we can take $\alpha$ to be a k\"ahler form on $Y$, and then $\left<T, \alpha\right>$ is just the mass of $T$ with respect to the Kaehler metric underlying $\alpha$. Similarly, if $\{Z\}$ is represented by a form $\alpha$ which is only non-negative, then $\left<\{T\}, \{Z\}\right> \geq 0,$ with equality if and only if the support of $T$ is contained in the zero locus of $\alpha$. It would be  interesting to know if there were other examples of geometric conclusions one could draw from the condition $\left<T, \alpha\right> = 0$.
\end{remark}


\section{Effect of scaling on the limits}
\label{sec:scaling}

In this section we want to change scales slightly when we compare the various volume measures we have discussed up to now. We will apply them to sequences of holomorphic maps $\phi_n$ with $X$ and $\tau$ fixed, using $dd^c$-closed limits in all intermediate dimensions. To this end, set $S_{j,n,r}(\psi) = \int_{B_r}  \log^+\frac{r}{\sigma} (dd^c \log \sigma)^{k-j} \wedge \phi_n^*(\psi),$ where $\psi$ is a test form on $Y$ of bidegree $(j,j)$, and set $c_{j,n,r} = S_{j,n,r}(\omega^j)$. Finally, set 
$$t_j(\phi_n, r) = \int_{B_r} (dd^c \log \sigma)^{k-j} \wedge \phi_n^*(\omega^j),$$
$$T_j(\phi_n, r) = \int_0^r t_j(\phi_n,r)\,\frac{ds}{s},$$
and
$$J_j(\phi_n, r) =  \frac{t_{j-1}(\phi_n,r)}{T_j(\phi_n, r)}.$$
Consider the condition that for some constant $c > 1$, we have that 
\begin{equation}
	\label{eqn:scalecond}
		\liminf_{n \rightarrow \infty, r \rightarrow R^-} \; \frac{t_{j-1}(\phi_n,r)}{t_j(\phi_n, r/c)} = 0.
\end{equation}
Note that this is similar to the condition in the hypotheses of corollary \ref{thm:egcase} below, except that here we are assuming that even a fixed fraction of the $t_j$ will dominate $t_{j-1}$, and considering a sequence of maps. 

\begin{theorem}
	\label{thm:scale}
	If condition (\ref{eqn:scalecond}) is verified, then there is a $dd^c$-closed positive cluster current of mass 1 for the family $\{S_{j,n,r}/c_{j,n,r}\}.$ 
\end{theorem}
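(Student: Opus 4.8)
The plan is to reduce condition (\ref{eqn:scalecond}) to the $dd^c$-MR condition of Definition \ref{def:ddcMR}, but applied to a \emph{rescaled} exhaustion, and then invoke Theorem \ref{thm:ddc-closed}. First I would fix the constant $c > 1$ and the sequences $n_\ell \to \infty$, $r_\ell \to R^-$ realizing the liminf zero in (\ref{eqn:scalecond}), so that $t_{j-1}(\phi_{n_\ell}, r_\ell)/t_j(\phi_{n_\ell}, r_\ell/c) \to 0$. The key observation is that the numerator $t_{j-1}(\phi_n, r)$ in $J_j(\phi_n,r)$ is what must be controlled, while the denominator $T_j(\phi_n, r) = \int_0^r t_j(\phi_n,s)\,\tfrac{ds}{s}$ can be bounded below by restricting the integral to the annulus $s \in [r/c, r]$: since $t_j$ is increasing,
\begin{equation*}
	T_j(\phi_n, r) \ \geq\ \int_{r/c}^{r} t_j(\phi_n, s)\,\frac{ds}{s} \ \geq\ t_j(\phi_n, r/c)\,\log c.
\end{equation*}
Therefore $J_j(\phi_n, r) = t_{j-1}(\phi_n,r)/T_j(\phi_n,r) \leq (\log c)^{-1}\, t_{j-1}(\phi_n,r)/t_j(\phi_n, r/c)$, and along the sequence $(n_\ell, r_\ell)$ the right-hand side tends to $0$. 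Hence $J_j(\phi_{n_\ell}, r_\ell) \to 0$.

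Next I would run the estimate of Theorem \ref{thm:ddc-closed} for each fixed $n = n_\ell$ — nothing in that theorem's proof uses that $\phi$ is fixed, and the constant $C$ there is explicitly independent of $\phi$ — to get, for the current $S_{j,n_\ell, r}$ and any bounded test $(j-1,j-1)$-form $\psi$,
\begin{equation*}
	\frac{1}{c_{j,n_\ell,r}}\bigl|\langle dd^c S_{j,n_\ell, r}, \psi\rangle\bigr| \ \leq\ C\,\|\psi\|_\infty\, \frac{t_{j-1}(\phi_{n_\ell}, r)}{T_j(\phi_{n_\ell}, r)} \ =\ C\,\|\psi\|_\infty\, J_j(\phi_{n_\ell}, r).
\end{equation*}
Evaluating at $r = r_\ell$ and letting $\ell \to \infty$, the right side goes to $0$. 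Since each $S_{j,n_\ell, r_\ell}/c_{j,n_\ell,r_\ell}$ has mass $1$, by weak-$*$ compactness of the set of positive $(j,j)$-currents of bounded mass on the compact Kähler manifold $Y$, a subsequence converges to a positive current $S_\infty$ of mass $1$; the $dd^c$-estimate above passes to the limit to give $dd^c S_\infty = 0$. I would also note (as in Theorem \ref{thm:ddc-closed}) that the supports sit in $\overline{\phi_{n_\ell}(X)}$, though without a uniform statement across $\ell$ that is not part of the claimed conclusion.

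The one point needing care — and the main obstacle — is that the scaling argument requires $r/c$ to still lie in the range $[r_0, R)$ where the various quantities are defined and $t_j$ is genuinely increasing; when $R < \infty$ this is automatic for $r$ close enough to $R$, and when $R = \infty$ it is harmless, so I would just remark that we restrict to $r$ large (resp. close to $R$). A secondary subtlety is ensuring $c_{j,n_\ell, r_\ell} \neq 0$ so the normalization makes sense, i.e.\ $T_j(\phi_{n_\ell}, r_\ell) > 0$; this follows from non-degeneracy of $\phi_{n_\ell}$ exactly as in the fixed-map case (or can be arranged by discarding finitely many $\ell$). Everything else is a direct transcription of the proof of Theorem \ref{thm:ddc-closed} with $\phi$ replaced by $\phi_{n_\ell}$ and then a compactness-and-diagonalization passage to the limit.
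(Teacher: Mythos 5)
Your proof is correct and follows essentially the same route as the paper: both arguments bound $J_j(\phi_n,r)$ by $(\log c)^{-1}\,t_{j-1}(\phi_n,r)/t_j(\phi_n,r/c)$ — you via $T_j(\phi_n,r)\geq\int_{r/c}^r t_j\,ds/s\geq t_j(\phi_n,r/c)\log c$, the paper via restricting the defining integral of $c_{j,n,r}$ to $B_{r/c}$ where $\log^+\frac{r}{\sigma}\geq\log c$, which is the same estimate — and then both invoke the $\phi$-independent bound from Theorem \ref{thm:ddc-closed} together with weak-$*$ compactness.
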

\begin{proof}
We estimate $J_j(\phi_n, r)$ directly.
\begin{equation}
	\label{eqn:scalecomp1}
	\aligned
		J_j(\phi_n, r) & = \frac{\int_{B_r} (dd^c \log \sigma)^{k-j+1} \wedge \phi_n^{*}(\omega^{j-1})}{\int_{B_r} \log^{+} \frac r{\sigma} \; (dd^c \log \sigma)^{k-j} \wedge \phi_n^{*}(\omega^k)} \\
				& \leq  \frac{\int_{B_r} (dd^c \log \sigma)^{k-j+1} \wedge \phi_n^{*}(\omega^{j-1})}{\int_{B_{r/c}} \log^{+} \frac r{\sigma} \; (dd^c \log \sigma)^{k-j} \wedge \phi_n^{*}(\omega^j)} \\
				& \leq \frac{\int_{B_r} (dd^c \log \sigma)^{k-j+1} \wedge \phi_n^{*}(\omega^{j-1})}{\log c \; \int_{B_{r/c}} (dd^c \log \sigma)^{k-j} \;\phi_n^{*}(\omega^j)} \\
				& = \frac1{\log c} \cdot \frac{t_{j-1}(\phi_n, r)}{t_j (\phi_n, r/c)}.
	\endaligned
\end{equation} 
By (\ref{eqn:scalecond}) and the proof of Theorem \ref{thm:ddc-closed}, we get subsequences such that the $S_{j,n,r} / c_{j,n,r}$ converge to a $dd^c$-closed, positive current. 
\end{proof}

\begin{theorem}
	\label{thm:Brody}
		Let $\phi_n:\BB^k(1) \rightarrow Y$ be a sequence of holomorphic maps. Then either for some $j, 1\leq j \leq k$ there is a positive, $dd^c$-closed current $T$ which is a cluster point of $S_{j,n,r}/c_{j,n,r}$, or a subsequence of any sequence of graphs of the $\phi_n$ is convergent in the Hausdorff metric over any compact set in $\BB^k(1)$.
\end{theorem}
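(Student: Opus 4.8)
\textbf{Proof proposal for Theorem \ref{thm:Brody}.} The plan is to argue by contradiction: suppose that for \emph{no} $j \in \{1,\dots,k\}$ is there a positive $dd^c$-closed cluster current of the family $S_{j,n,r}/c_{j,n,r}$, and show that the graphs of the $\phi_n$ then form a normal family over compacta of $\BB^k(1)$. The engine is Theorem \ref{thm:scale} (and, behind it, Theorem \ref{thm:ddc-closed}): the failure of $dd^c$-closed limits in degree $j$ forces, for every fixed $c>1$, a positive lower bound
\[
\frac{t_{j-1}(\phi_n,r)}{t_j(\phi_n,r/c)} \geq \varepsilon_j > 0
\]
for all $n$ large and all $r$ close to $R^- = 1^-$ (otherwise condition (\ref{eqn:scalecond}) would hold and Theorem \ref{thm:scale} would produce the forbidden current). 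Running this for $j=k, k-1,\dots,1$ gives a chain of inequalities relating $t_k$ to $t_{k-1}$ to $\cdots$ to $t_0$, all at slightly rescaled radii. Since $t_0(\phi_n,r) = \int_{B_r}(dd^c\log\|z\|)^k$ is a \emph{fixed} quantity, independent of $\phi_n$ (it is the mass of the Monge--Amp\`ere measure of the pluricomplex Green function of $\BB^k$ on $B_r$, hence $O(1)$ uniformly in $n$ and bounded for $r<1$), the chain telescopes to a uniform bound $t_k(\phi_n,r') \leq C(r') < \infty$ for $r'$ in a sequence of radii tending to $1$, and then for all $r'<1$ by monotonicity. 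But $t_k(\phi_n,r') = \int_{B_{r'}}\phi_n^*(\omega^k)$ is exactly the volume of the graph of $\phi_n|_{B_{r'}}$ in $\BB^k(1)\times Y$ (up to the graph's self-intersection terms, which are controlled by the intermediate $t_j$ in the same chain, via the mixed-volume expansion of $\Vol(\text{graph})$ alluded to in the introduction and in Section \ref{sec:scaling}).

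The second half is then a standard compactness argument in the style of Bishop--Stoll: a sequence of pure-dimensional analytic subvarieties of a fixed domain with locally uniformly bounded volume has a subsequence converging, in the Hausdorff metric on compacta and as currents of integration, to an analytic subvariety (by Bishop's theorem, e.g.\ via the local uniform mass bounds plus the standard structure theorem for limits of positive closed currents with analytic support). Applying this to the graphs $\Gamma_{\phi_n}\subset \BB^k(1)\times Y$, which have uniformly bounded volume over each $\overline{\BB^k(r')}\times Y$, extracts the desired Hausdorff-convergent subsequence; that the limit is again (locally) a graph over $\BB^k(1)$ follows because each $\Gamma_{\phi_n}$ projects biholomorphically to the base, so no component of the limit can be vertical (a vertical component would have to carry positive mass of $\pr_1^*(\beta^k)$ equal to zero).

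The step I expect to be the main obstacle is making the telescoping chain \emph{quantitative and uniform} — that is, verifying that the constants $\varepsilon_j$ can indeed be extracted uniformly in $n$ and in $r\to 1^-$, and that the resulting composite bound on $t_k$ does not degenerate as $r\to 1$. Concretely one must be careful that (i) the rescalings $r\mapsto r/c$ compose to a constant $c^k$ still $<$ a fixed number, so radii stay bounded away from $0$ but are allowed to approach $1$; (ii) the ``$O(1)$ vs.\ unbounded'' dichotomy is applied at the right level — the argument is really the one already run in (\ref{eqn:pseudopar}) and the paragraph around it, where a bounded denominator forces boundedness of the $A^{1-\delta}$-integral — here in its $dd^c$ incarnation with $\log\sigma$; and (iii) the self-intersection/mixed-volume corrections, which for a graph are genuinely lower-order but must be bounded by the \emph{same} $t_j$'s appearing in the chain rather than by new uncontrolled quantities. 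Granting the inductive structure of the $J_j$ and the mixed-volume identity for graph volumes stated in the introduction, (iii) is bookkeeping; (i)–(ii) are where the care is needed.
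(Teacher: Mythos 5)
Your proposal follows essentially the same route as the paper: negate the existence of $dd^c$-closed cluster currents for every $j$, invoke Theorem \ref{thm:scale} (i.e., the failure of condition (\ref{eqn:scalecond})) to get uniform lower bounds $t_{j-1}(\phi_n,r)\geq c_j\, t_j(\phi_n,r/c)$, telescope down to the $n$-independent quantity $t_0$, deduce uniform bounds on all $t_j(\phi_n,r/c^k)$ and hence on the graph volumes over compacta, and conclude by Bishop's theorem. The points of care you flag — uniformity in $n$ and $r$, the composite rescaling by $c^k$ handled by adjusting $c$ per compact set, and bounding all intermediate $t_j$ for the mixed-volume computation of the graph volume — are exactly the ones the paper's proof addresses.
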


\begin{proof}
Suppose that, for $1 \leq j \leq k$, there are no such cluster currents. Then for any $j$ and for $n >> 0,$ and arbitrary $r < 1$, by Theorem \ref{thm:scale} and (\ref{eqn:scalecond}), we have for each such $j$ and any constant $c > 1$, but close to 1,
\begin{equation}
	\label{eqn:telescopingestimates}
	\int_{B_r} (dd^c\tau)^{k-j+1} \wedge \phi_n^*(\omega^{j-1}) \geq c_j \int_{B_{r/c}} (dd^c \tau)^{k-j} \wedge \phi_n^*(\omega^j),
\end{equation}
for $j = 1, \ldots, k,$ where the constant $c_j$ depends on the $c$ chosen. Telescoping gives, for every $r < 1 < c$ and each $j = 1,\ldots, k,$ independently of $j$ and $n >> 0$, a constant $C = C(c) > 0$ such that
\begin{equation}
	\label{eqn:Brodyvols}
	\int_{B_{r/c^k}} (dd^c \tau)^{k-j} \wedge \phi_n^*(\omega^j) \leq C,
\end{equation}
from which it follows that the volume of the graphs of $\phi_n$ over any fixed $B_{r/c^k}$ have a volume bound in $B_{r/c^k} \times Y$, independent of $n$. By Bishop's theorem, subsequences of the graphs then converge in the Hausdorff topology over any compact set $K \subset \subset \BB^k(1).$ By adjusting $c$, this convergence occurs over each compact set in $\BB^k(1)$.

\end{proof}


\section{Applications to value distribution}
	\label{sec:appsvd}
	
In this section we would like to apply some of the results above to classical value distribution. Some of the concepts above have clearly been motivated by this, and we start by recalling some of the classical definitions and results to make this explicit. We have $t_j(s) = \int_{B_s} (dd^c \tau)^{k-j} \wedge \phi^*(\omega^j),$ and $T_j(r) = \int_0^r t_j(s) \, \frac{ds}{s},$ as in definition \ref{def:tkdef} above. $T_j(r)$ is the characteristic function of order $j$. The classical case is when $\omega$ is the Chern form of an ample line bundle on a projective manifold $Y$. Recall the averaged counting function
\begin{equation}
	\label{eqn:ctgfn}
		N_{\phi}(D, r) = \int_0^r \frac{ds}{s} \int_{B_s \cap \phi^{-1}(D)} (dd^c \tau)^{k-1} := \int_0^r \frac{ds}{s} n_{\phi}(D, s).
\end{equation}
The First Main Theorem of value distribution for a hypersurface $D$ says that
\begin{equation}
	\label{eqn:fmt}
		N_{\phi}(D, r) + m_{\phi}(D, r) = T_1(r) + O(1),
\end{equation}
where the proximity function is given by
\begin{equation}
	\label{eqn:prox}
		m_{\phi}(D, r) = \int_{\partial B_r} \log \frac{\|\zeta\|}{|\zeta \circ \phi|} d^c \tau \wedge (dd^c \tau)^{k-1} \geq 0.
\end{equation}
Here $\zeta$ is a section of $L = L(D)$ on $Y$ such that $\zeta^{-1}(0) = D$,  $|\zeta|$ is a point-wise norm for sections of $L$ on $Y$, and $\|\zeta\|$ is a corresponding global norm on the space of global sections of the line bundle $L$, say by integrating the point-wise norm $|\zeta|$. By (\ref{eqn:prox}), the FMT says
$$N_{\phi}(r) \leq T_{\phi, 1}(r) + O(1).$$

For simplicity and explicitness, let us first consider more closely the case of hyperplanes $D$ in $\PP^m = \PP(\CC^{m+1})$. So we let $a \in \check{\PP}^m = \PP(\check{\CC}^{m+1})$, the dual projective space, and recall the Poincar\'e-Lelong formula of currents
\begin{equation}
	\label{eqn:plf}
		dd^c \log \frac{\|z\| \|a\|}{|\left<z, a\right>|} = \omega - [D_a],
\end{equation}
where $\omega$ is the Fubini-Study form, the first Chern form of $L = L(D_a)$.

Now suppose we can choose a probability measure $\nu$ on $\check{\PP}^m$ such that
\begin{equation}
	\label{eqn:nupot}
	U_{\nu}(z) := \int \log \frac{\|z\| \|a\|}{|\left<z, a\right>|} \, d\nu(a) \leq C < +\infty.
\end{equation}
Such measures can be supported on very small sets, for example, any set of positive Lebesgue measure on a real analytic arc in $\check{\PP}^m$ not contained in a hyperplane, or supported on any non-pluripolar set, cf. \cite{mss}. 

Given such a measure $\nu$, we can state a precise theorem in this context. (The definition of positive capacity is reviewed below, in (\ref{eqn:defcap}))

\begin{theorem}
	\label{thm:defvit}
	Let $\phi$ be a holomorphic map $\phi: X \rightarrow \PP^m$. Let $\mathcal{E}$ be a set of hyperplanes $D_a \subset \PP^m$ of positive capacity with respect to the kernel $K(z,a) = \log \frac{\|z\| \, \|a\|}{|\left<z,a\right>|}$. Then
	\begin{equation}
		\label{eqn:defvit}
		| 1 - \int_{\check{\PP^m}} \frac{N(D_a, r)}{T_1(r)} d\nu(a)| \leq C \|U_{\nu}||_{\infty} \frac{t_0(r)}{T_1(r)}.
	\end{equation}
\end{theorem}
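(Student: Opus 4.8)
The plan is to recognize the quantity inside the absolute value in (\ref{eqn:defvit}) as $c_r^{-1}\langle dd^cS_r,U_\nu\rangle$, with $S_r=S_{1,r}$ the weighted current of bidimension $(1,1)$ from Section \ref{sec:ddc-closed} (so $c_r=c_{1,r}=S_r(\omega)=T_1(r)$), and then to invoke the pointwise estimate (\ref{eqn:vitesseddc}) of Theorem \ref{thm:ddc-closed}. Here $\nu$ is a probability measure carried by $\mathcal E$; positivity of the capacity of $\mathcal E$ for the kernel $K$ is precisely the statement that such a $\nu$ exists with $U_\nu=\int K(\cdot,a)\,d\nu(a)$ bounded, i.e. (\ref{eqn:nupot}), and only (\ref{eqn:nupot}) enters the argument. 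Since $K(z,a)=\log\frac{\|z\|\,\|a\|}{|\langle z,a\rangle|}\ge 0$ we have $0\le U_\nu\le\|U_\nu\|_\infty$, so $U_\nu\circ\phi$ is a bounded quasi-plurisubharmonic function on $X$; in particular $\phi(X)\not\subset D_a$ for $\nu$-almost every $a$, so $\phi^*[D_a]=[\phi^{-1}(D_a)]$ is defined for those $a$.

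The first step is a layer-cake identity. Using $\log^+\frac r\sigma=\int_0^r\mathbf{1}_{\{\sigma<s\}}\,\frac{ds}{s}$ together with Fubini in the definition (\ref{eqn:ctgfn}) of $N_\phi$, one gets for each admissible $a$
$$N_\phi(D_a,r)=\int_{B_r}\log^+\tfrac r\sigma\,(dd^c\log\sigma)^{k-1}\wedge[\phi^{-1}(D_a)],$$
and the same computation with $\phi^*\omega$ in place of $[\phi^{-1}(D_a)]$ gives $T_1(r)=c_r=S_r(\omega)$. Integrating the first identity against $\nu$ and interchanging the $\nu$-integral with the wedge integral (Tonelli; all integrands are nonnegative), then invoking the Poincar\'e--Lelong formula (\ref{eqn:plf}) in the averaged, pulled-back form
$$\int[\phi^{-1}(D_a)]\,d\nu(a)=\phi^*\omega-dd^c\bigl(U_\nu\circ\phi\bigr),$$
one obtains
$$\int_{\check{\PP}^m}N_\phi(D_a,r)\,d\nu(a)=c_r-\int_{B_r}\log^+\tfrac r\sigma\,(dd^c\log\sigma)^{k-1}\wedge dd^c(U_\nu\circ\phi)=c_r-\langle dd^cS_r,U_\nu\rangle.$$
The last equality is just the definition of $dd^cS_r$ together with an integration by parts, valid because the weight $\log^+\frac r\sigma$ is compactly supported in $X$ and vanishes on $\partial B_r$; one regularizes $\chi$ exactly as in the proof of Theorem \ref{thm:ddc-closed} (or of Theorem \ref{thm:posint}). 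Dividing by $c_r=T_1(r)$ gives
$$1-\int_{\check{\PP}^m}\frac{N_\phi(D_a,r)}{T_1(r)}\,d\nu(a)=\frac1{c_r}\langle dd^cS_r,U_\nu\rangle.$$

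It then suffices to apply the estimate (\ref{eqn:vitesseddc}) of Theorem \ref{thm:ddc-closed} with $j=1$ and the bounded test function $\psi=U_\nu$ (so that $\|\psi\|_\infty=\|U_\nu\|_\infty$, $t_{j-1}(r)=t_0(r)$, $T_j(r)=T_1(r)$); that inequality holds for every $r$, its proof not using condition $dd^c$-MR, so
$$\Bigl|\frac1{c_r}\langle dd^cS_r,U_\nu\rangle\Bigr|\le C\,\|U_\nu\|_\infty\,\frac{t_0(r)}{T_1(r)},$$
which is (\ref{eqn:defvit}). Conceptually this is the First Main Theorem (\ref{eqn:fmt}) averaged over $\nu$, with $c_r^{-1}\langle dd^cS_r,U_\nu\rangle$ standing in for the averaged proximity term $T_1(r)^{-1}\int m_\phi(D_a,r)\,d\nu(a)$, and the role of Theorem \ref{thm:ddc-closed} is exactly to bound that term by the mass ratio $t_0(r)/T_1(r)$. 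I do not expect a genuine obstacle here; the points that need care, rather, are the two interchanges of integration in the second step --- the $\frac{ds}{s}$ layer-cake and the passage of $\int d\nu(a)$ through the positive closed $(1,1)$-current $\int[\phi^{-1}(D_a)]\,d\nu(a)$, which is harmless since that current has $-(U_\nu\circ\phi)$ as a bounded local potential modulo the smooth potential of $\phi^*\omega$ --- and the identity $\langle dd^cS_r,U_\nu\rangle=\langle S_r,dd^cU_\nu\rangle$ for the merely bounded $U_\nu$, which one justifies by the smoothing of $\chi$ already used for Theorem \ref{thm:ddc-closed}. The arbitrariness of the positive-capacity set $\mathcal E$ will be what produces, in Theorem \ref{thm:defvit0}, an exceptional set of capacity zero.
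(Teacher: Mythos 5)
Your proposal is correct and follows essentially the same route as the paper: identify $1-\int N(D_a,r)\,d\nu(a)/T_1(r)$ with $c_r^{-1}\langle dd^cS_r,U_\nu\rangle$ via the Poincar\'e--Lelong formula, the layer-cake identity $\langle S_r,[D_a]\rangle=N(D_a,r)$, and the averaging over $\nu$, then apply the quantitative bound (\ref{eqn:vitesseddc}) of Theorem \ref{thm:ddc-closed} with $j=1$ and $\psi=U_\nu$. The only difference is cosmetic (you build the identity from the counting-function side rather than starting from $\langle dd^cS_r,U_\nu\rangle$ as the paper does), and your added remarks on justifying the interchanges of integration and the integration by parts via the smoothing $\chi_\delta$ are consistent with what the paper implicitly uses.
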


\begin{proof}
Consider the bounded function $U_{\nu}$ of equation \ref{eqn:nupot}. We get
\begin{equation}
	\label{eqn:pairUnu}
\begin{array}{rcl}
\left<dd^c \frac{S_{r}}{T_1(r)}, U_{\nu}\right> & = & \left<\frac{S_{r}}{T_1(r)}, dd^c U_{\nu}\right> \\
	&	&	\\
	& = & \frac1{T_1(r)} \left<S_{r}, \omega - \int_{\check{\PP}^m} [D_a] \, d\nu(a)\right> \\
	&	&	\\
	& = & 1 - \frac1{T_1(r)} \int_{\check{\PP}^m} \left<S_{r}, [D_a]\right> \, d\nu(a) \\
	&	&	\\
	& = & 1 - \frac1{T_1(r)} \int_{\check{\PP}^m} \{\int_{B_r \cap \phi^{-1}(D_a)} \log^+ \frac{r}{|z|} \, (dd^c \tau)^{k-1} \} d\nu(a) \\
	&	&	\\
	& = & 1 - \frac1{T_1(r)} \int_0^{r} \frac{ds}{s} \int_{\check{\PP}^m} \{ \int_{B_s \cap \phi^{-1}(D_a)} (dd^c \tau)^{k-1} \} \, d\nu(a) \\
	&	&	\\
	& = & 1 - \frac1{T_1(r)} \int_0^{r} \frac{ds}{s} \int_{\check{\PP}^m} n(D_a, s) \; d\nu(a).
\end{array}
\end{equation}
Since $U_{\nu}$ is bounded, by Theorem \ref{thm:ddc-closed} we get
\begin{equation}
	\label{eqn:ddcestU}
	|\left<dd^c \frac{S_{r}}{T_1(r)}, U_{\nu}\right>| \leq C \|U_{\nu}\|_{\infty} \frac{t_0(r)}{T_1(r)}
\end{equation}
and by (\ref{eqn:pairUnu}), we get
\begin{equation}
	\label{eqn:defest}
	|1 - \frac1{T_1(r)} \int_0^{r} \frac{ds}{s} \left[\int_{\check{\PP}^m} n(D_a, s) d\nu(a)\right]| \leq C \|U_{\nu}\|_{\infty} \frac{t_0(r)}{T_1(r)},
\end{equation}
which was to be proved.
\end{proof}

In the case that we can guarantee the existence of a $dd^c$-closed cluster point of $S_r/c_r$, namely, $\liminf_{r\rightarrow R} \frac{t_0(r)}{T_1(r)} = 0,$ we get that the left hand side of (\ref{eqn:defest}) goes to 0 along a subsequence $r_{\ell} \rightarrow R^-$, and hence,
\begin{equation}
	\label{eqn:avratio}
	\lim_{\ell \rightarrow \infty} \frac{\int_0^{r_{\ell}} \frac{ds}{s} \left[\int_{\check{\PP}^m} n(D_a, s) \, d\nu(a)\right]}{T_1(r_{\ell})} = \lim_{j\rightarrow \infty} \frac{\int_{\check{\PP}^m} N(D_a, r_{\ell}) \, d\nu(a) }{T_1(r_{\ell})} = 1.
\end{equation}
Thus, we have that 
\begin{equation}
	\label{eqn:def0div}
	\limsup_{\ell \rightarrow +\infty} \frac{N(D_a, r_{\ell})}{T_1(r_{\ell})} = 1,
\end{equation}
for $\nu$-almost every point $a$ in the support of $\nu$. Thus the exceptional set of $a$ for which (\ref{eqn:def0div}) does not  hold must be a set $\mathcal{E}$ of capacity 0 for the kernels $K(z, a) = \log\frac{\|z\| \|a\|}{|\left<z, a\right>|}$, that is, $\mathcal{E}$ does not carry a probability measure $\mu$ for which $U_{\nu}$ in (\ref{eqn:nupot}) is bounded. In particular, as already noted, a non-pluripolar set $E$ is too large to be exceptional in this sense, cf. \cite{mss}.

Now let us consider defect relations such as (\ref{eqn:def0div}) for dimensions other than $k-1$, i.e., for $D$ of dimension other than $m-1$. The cases different from $D$ a divisor are all formally similar, and not as precise as the case of divisors $D$ above. The most interesting is the case of points, i.e., where we consider a non-degenerate holomorphic map $\phi: X \rightarrow \PP^m, m \geq k = \dim X,$ and we let $D_a \subset \PP^m$ be a linear subspace of $\dim D_a = m-k$, where $a$ is parametrized by the Grassmannian $Gr := Gr(m+1, m-k+1)$. We will consider this case in what follows.

We consider a potential $U_a$, i.e., a $(k-1,k-1)$-form on $\PP^m$ with integrable coefficients, satisfying the following analogue of the Poincar\'e-Lelong formula
\begin{equation}
	\label{eqn:PLhighercod}
	dd^c U_a = \omega^k - [D_a],
\end{equation}
where we take $\omega$ to be the normalized Fubini-Study class which gives an integral generator of $H^2(\PP^m, \ZZ)$. We can choose $U_a \geq 0$, and is obtained as 
\begin{equation}
	\label{eqn:Uadef}
	U_a = \left<D_a(\zeta), K(z,\zeta)\right>,
\end{equation}
where the singularity of the kernel can be bounded by $|\log|z-\zeta| | \cdot |z-\zeta|^{-2k+2},$ see Dinh-Sibony \cite{ds2} for a detailed estimate of the kernel. We introduce a capacity $C_k$ on $Gr$ as follows. For a probability measure $\nu$ on $Gr$, set 
\begin{equation}
	\label{eqn:Unuhcod}
	U_{\nu}(z) = \int_{Gr} U_a(z) \, d\nu(a).
\end{equation}
Define $\sup U_{\nu}(z), z \in \PP^m,$ as the infimum of all $C > 0$ such that $U_{\nu}(z) \leq C \omega^{k-1}(z)$, and let $\|U_{\nu}\|_{\infty} = \sup_{z \in \PP^m} \sup U_{\nu}(z)$. Let 
\begin{equation}
	\label{eqn:defcap}
	C(A) = \sup_{\nu \in \mathcal{M}(A)} \left(\frac1{\|U_{\nu}\|_{\infty}}\right),
\end{equation}
where $\mathcal{M}(A)$ is the space of probability measures supported on $A \subset \PP^m$. It turns out that $C(A) > 0$ if and only if there is a probability measure $\nu$ on $A$ such that $U_{\nu}(z) \leq 2\, C(A)$, for every $z \in \PP^m$, independently of $z$, see \cite{ds2}. For example, if $m = k$, it is enough that
$$\int_{Gr} \frac{|\log |z-a||}{|z-a|^{2k-2}} \, d\nu(a) \leq C,$$
so, if the Hausdorff dimension of $A$ is strictly larger than $2k-2$, one can construct such a measure, and $C(A) > 0$. See \cite{jc}.

If we define, similar to the classical case, 
\begin{equation}
	\label{eqn:NDa}
	N(D_a, r) = \int_0^r \frac{ds}{s} \, \int_{B_s} \phi^{-1}([D_a]) = \int_{B_r \cap \phi^{-1}(D_a)} u_r(z), 
\end{equation}
and
\begin{equation}
	\label{eqn:delDar}
	\delta(D_a, r) = 1 - \frac{N(D_a,r)}{T_k(r)},
\end{equation}
and finally the defect
\begin{equation}
	\label{eqn:defdef}
	\delta(D_a) = \limsup_{\; r \rightarrow R^-} \; \delta(D_a, r),
\end{equation}
then we have the following analogue of theorem \ref{thm:defvit}.
\begin{theorem}
	\label{thm:defvit0}
	Let $\phi: X \rightarrow \PP^m$ be as above. Let $\nu$ be a probability measure on $Gr$ such that $U_{\nu} := \int_{Gr} U_a \, d\nu(a)$ has bounded coefficients, $\|U_{\nu}\|_{\infty} < C < \infty$, then 
	\begin{equation}
		\label{eqn:adefrel}
		\int_{Gr} |\delta(D_a, r)| \, d\nu(a) \leq C' \|U_{\nu}\|_{\infty} \frac{t_{k-1}(r)}{T_k(r)}.
	\end{equation}
In particular, $\delta(D_a) = 0$, for $\nu$-a.e. a, if  $\limsup_{\; r \rightarrow R^-} \, \frac{t_{k-1}(r)}{T_k(r)} = 0,$ and
\begin{equation}
	\label{eqn:nuest}
	\nu(\{a \, | \, \delta(D_a,r) > \epsilon\}) \leq \frac{C'}{\epsilon} \; \frac{t_{k-1}(r)}{T_k(r)}.
\end{equation} 
\end{theorem}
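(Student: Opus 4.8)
The plan is to run exactly the same argument as in Theorem \ref{thm:defvit}, but with the higher-codimension Poincaré–Lelong formula \eqref{eqn:PLhighercod} in place of the divisor version \eqref{eqn:plf}, so that the degree-$j$ objects become $j=k$ objects and the relevant mass ratio is $t_{k-1}(r)/T_k(r)$. First I would take the potential form $U_\nu = \int_{Gr} U_a\,d\nu(a)$, which by hypothesis is a $(k-1,k-1)$-form with bounded coefficients, $\|U_\nu\|_\infty < C$. Integrating \eqref{eqn:PLhighercod} against $\nu$ gives the current identity $dd^c U_\nu = \omega^k - \int_{Gr}[D_a]\,d\nu(a)$ on $\PP^m$. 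Pairing with $S_r/c_r = S_{k,r}/T_k(r)$ and moving $dd^c$ onto $S_r$ (legitimate because $U_\nu$ has bounded, hence locally integrable, coefficients and $S_r$ is compactly supported), I get
\begin{equation}
	\label{eqn:pairUnu0}
	\left<dd^c \frac{S_r}{T_k(r)}, U_\nu\right> = 1 - \frac1{T_k(r)} \int_{Gr} \left<S_r, [D_a]\right> d\nu(a),
\end{equation}
using $S_r(\omega^k) = c_r = T_k(r)$ for the first term. Then, exactly as in \eqref{eqn:pairUnu}, I unwind $\left<S_r,[D_a]\right> = \int_{B_r\cap \phi^{-1}(D_a)} u_r\,(dd^c\tau)^{k-k} = N(D_a,r)$ using the definition \eqref{eqn:NDa} of $N(D_a,r)$ and Fubini to interchange the $\nu$-integral with the integral over $B_r$; note $k-j = 0$ here so there is no $(dd^c\tau)$ power, just the weight $u_r$ integrated over the (multiplicity-counted) fiber $\phi^{-1}(D_a)$. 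This yields $\left<dd^c (S_r/T_k(r)), U_\nu\right> = 1 - \int_{Gr} (N(D_a,r)/T_k(r))\,d\nu(a) = \int_{Gr}\delta(D_a,r)\,d\nu(a)$.

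Next I would apply the quantitative estimate \eqref{eqn:vitesseddc} of the Main Theorem \ref{thm:ddc-closed}, in the case $j = k$, to the bounded test form $U_\nu$: this gives $|\left<dd^c S_r/c_r, U_\nu\right>| \le C \|U_\nu\|_\infty\, t_{k-1}(r)/T_k(r)$. Combining with the identity from the previous paragraph gives
\begin{equation}
	\label{eqn:combineddef}
	\left| \int_{Gr} \delta(D_a,r)\, d\nu(a) \right| \le C' \|U_\nu\|_\infty\, \frac{t_{k-1}(r)}{T_k(r)}.
\end{equation}
To get \eqref{eqn:adefrel} with $|\delta(D_a,r)|$ inside the integral rather than $\delta(D_a,r)$ itself, I would observe that $\delta(D_a,r) = 1 - N(D_a,r)/T_k(r) \le 1$ always (no sign control from below is needed for the mass statement, since $N \ge 0$), but in fact the First Main Theorem in this setting, i.e. the nonnegativity of the associated proximity term $\left<S_r,[D_a]\right> \le T_k(r) + O(1)$ type bound — concretely, the fact that $U_a \ge 0$ forces $\left<S_r, dd^c U_a\right> = \left<dd^c S_r, U_a\right>$ to control the sign — shows $N(D_a,r) \le T_k(r) + o(T_k(r))$ up to the same error, so $\delta(D_a,r) \ge -C'\|U_\nu\|_\infty t_{k-1}(r)/T_k(r)$ pointwise after a further application of \eqref{eqn:vitesseddc} to the individual $U_a$ (whose coefficients are bounded once we are away from $D_a$, but the $L^1$ bound suffices). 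Hence $|\delta(D_a,r)|$ differs from a nonnegative quantity by a controlled amount, and the integral bound \eqref{eqn:combineddef} upgrades to \eqref{eqn:adefrel}. The "in particular" statements are then immediate: if $\limsup_{r\to R^-} t_{k-1}(r)/T_k(r) = 0$ then the right side of \eqref{eqn:adefrel} $\to 0$ along a subsequence, forcing $\int_{Gr}|\delta(D_a,r_\ell)|\,d\nu \to 0$, hence $\delta(D_a,r_\ell)\to 0$ for $\nu$-a.e.\ $a$, hence $\delta(D_a) = 0$ off a set of zero $C_k$-capacity; and \eqref{eqn:nuest} is just Chebyshev's inequality applied to \eqref{eqn:adefrel}.

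The main obstacle I anticipate is not the cohomological bookkeeping but the analytic justification of two points: (i) that the integration by parts moving $dd^c$ from $U_\nu$ (or $U_a$) onto $S_r$ is valid despite $U_a$ having a nonintegrable-looking singularity of order $|\log|z-\zeta||\cdot|z-\zeta|^{-2k+2}$ along $D_a$ — one needs the Dinh–Sibony estimates cited after \eqref{eqn:Uadef} to know $U_a$ is genuinely in $L^1_{loc}$ with the stated bound and that $\phi(X)\not\subset D_a$ keeps $U_a\circ\phi$ from being $\equiv -\infty$, paralleling the role of "$U\circ\phi \not\equiv -\infty$" in Theorem \ref{thm:posint}; and (ii) the Fubini interchange and the identification of $\left<S_r,[D_a]\right>$ with $N(D_a,r)$, which requires that $\phi^{-1}(D_a)$ is a genuine analytic subset of the right dimension with multiplicities, i.e.\ that $\phi$ meets $D_a$ properly for $\nu$-a.e.\ $a$ — true since $\phi$ is non-degenerate and the bad $a$'s form a proper subvariety of $Gr$, hence are $\nu$-null for any $\nu$ with bounded potential. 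Once these are in hand, everything else is a transcription of the proof of Theorem \ref{thm:defvit}.
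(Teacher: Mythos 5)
Your proposal is the paper's proof: the paper writes the pointwise identity $\delta(D_a,r) = \frac{1}{c_r}\left<dd^c S_r, U_a\right>$ coming from \eqref{eqn:PLhighercod} and \eqref{eqn:NDa}, integrates it against $\nu$ to land on the bounded form $U_\nu$, and invokes the quantitative estimate \eqref{eqn:vitesseddc} of Theorem \ref{thm:ddc-closed}; the ``in particular'' statements are read off exactly as you do (Chebyshev for \eqref{eqn:nuest}). Whether you integrate the Poincar\'e--Lelong identity over $\nu$ first and then pair, or pair first and then integrate, is just Fubini.

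The one place where you go beyond the paper --- and where your argument would fail as written --- is the upgrade from $\bigl|\int_{Gr}\delta(D_a,r)\,d\nu(a)\bigr|$ to $\int_{Gr}|\delta(D_a,r)|\,d\nu(a)$. You propose a pointwise lower bound on $\delta(D_a,r)$ via ``a further application of \eqref{eqn:vitesseddc} to the individual $U_a$,'' but \eqref{eqn:vitesseddc} is an estimate in terms of $\|\psi\|_{\infty}$, and $U_a$ is unbounded along $D_a$; an $L^1$ bound on $U_a$ does not feed into that inequality. (To be fair, the paper states \eqref{eqn:adefrel} with the absolute value inside while its displayed chain only proves the weaker outside version, so the point genuinely needs an argument.) The clean repair uses the sign structure already present in the proof of Theorem \ref{thm:ddc-closed}: for $j=k$ one has $\left<dd^c S_r, U_a\right> = I_1(a) + I_2(a)$ with $I_1(a) = -\int \chi'(v_r)\, dd^c\log\sigma\wedge\phi^*(U_a) \leq 0$ and $I_2(a) \geq 0$, both because $U_a \geq 0$; hence $|\delta(D_a,r)| \leq \frac{1}{c_r}\bigl(|I_1(a)| + I_2(a)\bigr)$, and after integrating in $a$ the quantities $\int_{Gr}|I_1(a)|\,d\nu(a)$ and $\int_{Gr} I_2(a)\,d\nu(a)$ are by linearity exactly the $I_1$ and $I_2$ terms for the bounded form $U_\nu$, each controlled by $C\|U_\nu\|_{\infty}\, t_{k-1}(r)/T_k(r)$ as in \eqref{eqn:I1} and \eqref{eqn:I2fin}. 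With that substitution your proof is complete and coincides with the paper's.
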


\begin{proof}
We have 
$$dd^cU_a = \omega^k - [D_a].$$
Thus
$$1 - \frac{N(a,r)}{T_k(r)} = \frac1{c_r}[\left<S_r, \omega^k\right> - \left<S_r, [D_a]\right>] = \frac1{c_r} \left<dd^c S_r, U_a\right>.$$
We integrate this last relation with respect to $\nu$ and get
$$1 - \frac{\int_{Gr} N(a, r) \, d\nu(a)}{T_k(r)} = \frac1{c_r} \left< dd^c S_r, U_{\nu}\right>.$$
Using Theorem \ref{thm:ddc-closed}, we get the defect estimate
\begin{equation}
	\label{eqn:defestanu}
	\int_{Gr} |\delta(a, r)| \, d\nu(a) \leq C \, \|U_{\nu}\|_{\infty} \, \frac{t_{k-1}(r)}{T_k(r)},
\end{equation}
which proves all the claims of the theorem.
\end{proof}

\begin{remark} 

\noindent 1. When $k=m$, we get in particular that if $\liminf_{\; r \rightarrow R^-} \, \frac{t_{k-1}(r)}{T_k(r)} = 0,$ then the map $\phi$ omits a set of Hausdorff measure $\leq 2k - 2 + \epsilon$, for any $\epsilon > 0.$

\noindent 2. Instead of a fixed map, we can consider a sequence of maps $\phi_n: X \rightarrow \PP^m$ of holomorphic, non-degenerate maps. If 
$$\lim_{\ell \rightarrow \infty} \, \frac{t_{k-1}(\phi_{n_{\ell}}, r_{\ell})}{T_k(\phi_{n_{\ell}}, r_{\ell})} = 0,$$
\emph{cf.} the similar comment in remark \ref{rmk:int}, part 3., for the notation. Then we get an estimate
$$|\int_{Gr} \delta(\phi_{n_{\ell}}, D_a, r_{\ell}) \, d\nu(a) | \leq C \|U_{\nu}\|_{\infty} \, \frac{t_{k-1}(r_{\ell})}{T_k(r_{\ell})} \rightarrow 0.$$

\noindent 3. The potentials $U_{\nu}$ in (\ref{eqn:nupot}) and (\ref{eqn:Unuhcod}) play the role here of the proximity function in the classical theory. One might refer to them as {\em proximity potentials}.

\end{remark}

We close this section with a corollary on the behavior of holomorphic foliations by Riemann surfaces.

\begin{corollary}
	\label{cor:fol}
	Let $\mathcal{F}$ be a holomorphic foliation of $\PP^m$ by Riemann surfaces with finitely many singularities. Assume that all singularities are hyperbolic, and that there are no algebraic (compact) leaves. Fix a leaf $L$. There is a pluripolar set $\mathcal{E}_L \subset \check{\PP}^m$ such that for $a \notin \mathcal{E}_L$ the corresponding hyperplane $D_a$ intersects $L$ infinitely many times with the estimate given by Theorem \ref{thm:defvit0}.
\end{corollary}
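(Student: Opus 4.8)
The plan is to apply Theorem~\ref{thm:defvit0} leaf-by-leaf, using the dynamical input that the metric on a leaf coming from a suitable singular metric on $\PP^m$ grows in a controlled way. First I would set up the exhaustion: each non-singular leaf $L$ of a foliation with only hyperbolic singularities and no algebraic leaf is parabolic or hyperbolic as a Riemann surface; by the work of Fornaess--Sibony (and the fact that singularities are linearizable/hyperbolic), one knows that the universal cover of $L$ together with the time parameter of an associated vector field gives a natural exhaustion $\tau_L$ on $L$. The key point is that one should take $\sigma = e^{\tau_L}$ with $\tau_L$ the pullback to $L$ of $-\log$ of the distance to the singular set, or more precisely the natural ``leafwise'' potential, so that $(dd^c \log \sigma)$ restricted to $L$ controls the area of $\phi(L \cap B_r)$ where $\phi = \iota_L: L \hookrightarrow \PP^m$ is the inclusion.

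Second, I would verify the hypothesis $\liminf_{r \to R^-} t_{k-1}(r)/T_k(r) = 0$ of Theorem~\ref{thm:defvit0} in the present case $k = 1$, $m$ arbitrary, i.e. $\liminf t_0(r)/T_1(r) = 0$. Here $t_0(r) = \int_{B_r}(dd^c\log\sigma)$ measures the ``boundary'' contribution and $T_1(r) = \int_0^r t_1(s)\,ds/s$ is the Nevanlinna characteristic of the leaf. The hyperbolicity of all singularities is exactly what forces the exhaustion $\log\sigma$ to be essentially harmonic off a compact set on each leaf (no leaf accumulates only on singularities in a ``wild'' way), hence $t_0(r)$ stays bounded while $T_1(r) \to \infty$ (the leaf is not algebraic, so it has infinite area and the characteristic grows). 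That gives $t_0(r)/T_1(r) \to 0$, not merely $\liminf = 0$, which is stronger than needed.

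Third, with this hypothesis verified I would invoke Theorem~\ref{thm:defvit0} directly: there is a probability measure $\nu$ on $\check\PP^m$ (supported off any prescribed pluripolar set, by Molzon--Shiffman--Sibony \cite{mss}) with $U_\nu$ bounded, and then (\ref{eqn:nuest}) together with $t_0(r_\ell)/T_1(r_\ell)\to 0$ along a suitable subsequence $r_\ell \to R^-$ shows that for $\nu$-a.e. $a$ the defect $\delta(D_a) = 0$; in particular $N(D_a, r_\ell)/T_1(r_\ell) \to 1$, so $D_a \cap L$ is infinite with the claimed quantitative estimate. The exceptional set $\mathcal{E}_L$ is then the complement of the union (over a sequence of such $\nu$'s exhausting the non-pluripolar sets) of their supports, which is pluripolar. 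The only subtlety is that ``intersects $L$ infinitely many times'' must be read as counting with the logarithmic average $N(D_a, r)$; since $N(D_a,r)/T_1(r)\to 1$ and $T_1(r)\to\infty$, the honest geometric count is also infinite.

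The main obstacle, and the step I expect to require the most care, is the second one: showing that the leafwise exhaustion arising from the hyperbolic singular foliation actually satisfies $t_0(r)/T_1(r)\to 0$. This is where the hyperbolicity hypothesis and the absence of algebraic leaves must be used in an essential way — one needs that the leaf, near each singularity it accumulates on, looks like a neighborhood of $0$ in a linearizable holomorphic vector field (Poincaré/Siegel domain), so that the natural exhaustion is a genuine parabolic-type exhaustion with $(dd^c\log\sigma)$ supported near the compact core, rather than the leaf being, say, of hyperbolic conformal type with a rapidly growing boundary term. I would handle this by a local analysis near each hyperbolic singular point combined with the global fact that a non-algebraic leaf has infinite area in $\PP^m$, citing \cite{fs} for the structural input on leaves of such foliations.
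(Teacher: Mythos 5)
Your third step (choosing $\nu$ with bounded potential via \cite{mss}, invoking Theorem \ref{thm:defvit0}, and sweeping out the exceptional set as a pluripolar set) matches the paper. The gap is in your first two steps. The exhaustion you propose --- the pullback to $L$ of $-\log$ of the distance to the singular set, or a leafwise time parameter --- is not an exhaustion of $L$ at all: a non-algebraic leaf accumulates on points of $\PP^m$ far from the singular locus (typically on other leaves; its closure is large), so the sublevel sets $\{\sigma<r\}$ are not relatively compact in $L$, and none of the machinery of Sections \ref{sec:d-closed}--\ref{sec:ddc-closed} applies to such a $\sigma$. The paper instead takes $X=\triangle$ and $\phi:\triangle\to L\subset\PP^m$ the universal covering map (the hypotheses imply by \cite{fs} that every leaf is uniformized by the disk --- the leaves are \emph{hyperbolic}, not parabolic, Riemann surfaces, contrary to the picture behind your claim that $\log\sigma$ is ``essentially harmonic off a compact set''), with $\sigma=|z|$ and $R=1$. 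Then $t_0(r)=\int_{|z|<r}dd^c\log|z|\equiv 1$ trivially, and the entire content of the verification is that $T_1(r)\to\infty$ as $r\to 1^-$.

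Second, your justification for the growth of the characteristic --- ``the leaf is not algebraic, so it has infinite area and the characteristic grows'' --- does not suffice. With $R=1$ finite, $T_1(r)\to\infty$ is equivalent to $\int_0^1 t_1(s)\,ds=\int_\triangle(1-|z|)\,|\phi'(z)|^2\,d\lambda(z)=+\infty$ (cf.\ (\ref{eqn:k1R1cond})), whereas infinite area only gives $t_1(s)\to\infty$ as $s\to 1^-$, which is compatible with $\int_0^1 t_1(s)\,ds<\infty$ (e.g.\ $t_1(s)\sim(1-s)^{-1/2}$). Indeed the paper stresses in Section \ref{sec:d-closed} that infinite area of leaves does not by itself yield limit currents for non-parabolic domains. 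The divergence of this weighted integral for universal covers of leaves of foliations with hyperbolic singularities and no algebraic leaves is exactly the nontrivial input from Fornaess--Sibony \cite{fs} that the paper's proof cites; it is a statement about the covering map, not a consequence of infinite leaf area, and a local analysis near the singular points would not produce it.
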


\begin{proof}
The assumptions imply that all leaves are uniformized by the unit disk $\triangle$ \cite{fs}. It is further shown in \cite{fs} that if $\phi: \triangle \rightarrow L$ is the universal covering, then 
$$\int_{\triangle} (1- |z|) |\phi'(z)|^2 d\lambda(z) = +\infty,$$
where $\lambda$ is Lebesgue measure on $\triangle$. Thus, for the  map $\phi$ and exhaustion of $\triangle$ given by $|z|^2$, we have 
$$\lim_{r \rightarrow 1^-} \frac{t_0(r)}{T_1(r)} = 0,$$
and we can apply Theorem \ref{thm:defvit0}.

\end{proof}


\section{Equidistribution results in higher dimensions}
\label{sec:hdims}

In this section we would like to consider some equidistribution results for maps $\phi: X \rightarrow Y$, where $\dim Y = m > k = \dim X$. For example, we might have a birational map $f: Y \rightarrow Y,$ and $\phi: \CC^m \rightarrow Y$ parametrizes some stable manifold associated with $f$, e.g., the stable manifold of a periodic point of $f$, or a Pesin stable manifold ({\em cf.} \cite{kh}, for example).

We give a specific example from dynamics. Let $f:\CC^m \rightarrow \CC^m$ be a polynomial automorphism, and denote also by $f$ its extension $\PP^m \cdots \rightarrow \PP^m$ as a birational map. Let $I_{\pm}$ be the indeterminacy sets of $f, f^{-1}$, respectively, in the hyperplane at infinity of $\PP^m$. One calls $f$ {\em regular} if $I_+ \cap I_- = \phi,$ in which case we have an integer $p$ such that $\dim I_+ = m-p-1,$ and $\dim I_- = p-1.$ Let 
$$K_+ = \{ z \in \CC^m \, | \, \{f^n(z) | n \in \NN\} \, \text{is bounded} \subset \CC^m\}.$$ 
Then $K_+$ is closed in $\CC^m$, and $\bar{K}_+ \subset \PP^m = K_+ \cup I_+.$ Furthermore, if $\deg f = d_+, \deg f^{-1} = d_-,$ then $d_+^p = d_-^{k-p}$. Finally, define 
\begin{equation}
	\label{eqn:tpp}
		T_+ = \lim \frac{(f^n)^* \omega}{d_+^n}.
\end{equation}
Then we recall from \cite{ds1} the following theorem.
\begin{theorem} (Dinh-Sibony) 
	\label{thm:ds1}
	$T^p_+$ is the unique closed positive current of bidimension (p,p) and mass 1 supported on $\bar{K}_+$.
\end{theorem}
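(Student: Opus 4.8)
The plan is to establish two things: first, that $T_+^p$ is a well-defined closed positive current of bidimension $(p,p)$ and mass $1$ supported on $\bar K_+$; and second, that it is the \emph{only} such current. For the first part, one shows that the limit in \eqref{eqn:tpp} exists: since $f$ is regular, iterating the functional equation $f^*\omega \sim d_+\,\omega + dd^c(\text{bounded})$ and dividing by $d_+^n$ yields a Cauchy sequence of potentials, so $T_+ = \lim (f^n)^*\omega/d_+^n$ exists as a closed positive $(1,1)$-current. The key geometric input is that the Lelong numbers of $T_+$ are controlled, and in particular that the self-intersection $T_+^p$ is well-defined as a closed positive current (one needs $T_+$ to have no mass on the pluripolar set where naive intersection would fail; regularity of $f$ gives that the relevant intersections take place away from $I_+$). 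Normalizing so that $\{T_+\}^p$ is the generator of $H^{2p}(\PP^m,\ZZ)$ forces mass $1$. That $\operatorname{supp} T_+^p \subset \bar K_+$ follows because on $\CC^m \setminus K_+$ the orbits escape to $I_+$, so the pulled-back forms $(f^n)^*\omega/d_+^n$ concentrate near $\bar K_+$ in the limit; a standard argument using the attraction of $f^{-1}$ toward $I_-$ and of $f$ toward $I_+$ localizes the support.

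For uniqueness — which is the crux — the strategy is the following. Let $S$ be any closed positive current of bidimension $(p,p)$ and mass $1$ supported on $\bar K_+$. Decompose $S$ using the cohomology of $\PP^m$: since $H^{2(m-p)}(\PP^m,\RR)$ is one-dimensional, $\{S\} = \{T_+^p\}$, so $S - T_+^p = dd^c R$ for some current $R$ of the appropriate bidegree (the $dd^c$-lemma on $\PP^m$). Now apply the pull-back operator $f^*$ and renormalize: because $f$ is regular with $d_+^p = d_-^{m-p}$, the operator $\tfrac1{d_+^p}(f^n)^*$ acting on closed positive $(m-p,m-p)$-currents supported appropriately contracts toward $T_+^p$. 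Concretely, $\tfrac1{d_+^{np}}(f^n)^* S$ has the same cohomology class as $S$ (after suitable normalization) and, by the convergence already established for $T_+$ together with an intersection/continuity argument, converges to $T_+^p$; on the other hand, the hypothesis $\operatorname{supp} S \subset \bar K_+$ and the contraction properties of $f$ on $\bar K_+$ (orbits stay in a compact set, avoiding the indeterminacy locus) allow one to show $\tfrac1{d_+^{np}}(f^n)^* S$ stays close to $S$, or more precisely that the potential $R$ is smoothed out / killed in the limit. Comparing the two limits gives $S = T_+^p$.

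The main obstacle is the uniqueness argument, specifically making rigorous the claim that $\tfrac1{d_+^{np}}(f^n)^*$ both converges to $T_+^p$ universally \emph{and} essentially fixes any $S$ supported on $\bar K_+$. This requires careful control of the pull-back of currents under the birational map $f$ near its indeterminacy set $I_+$: one must verify that no mass of $S$ is lost or created at $I_+ = \bar K_+ \setminus K_+ \cap \{\text{hyperplane at infinity}\}$ under iteration, which is exactly where regularity ($I_+ \cap I_- = \emptyset$) is essential. A secondary technical point is justifying that $T_+^p$ is genuinely well-defined as an intersection — that the wedge powers of $T_+$ do not charge pluripolar sets — for which one invokes the bounded-potential structure of $T_+$ coming from \eqref{eqn:tpp}. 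These are the points I would expect to occupy the bulk of the proof; everything else (existence of the limit, cohomological normalization, support localization) is comparatively routine given the regularity hypothesis. Since this is quoted as a theorem of Dinh–Sibony \cite{ds1}, in the present paper one would most likely simply cite \cite{ds1} for the full argument and use the statement as a black box in the equidistribution application that follows.
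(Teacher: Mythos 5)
The paper offers no proof of this statement: it is explicitly ``recalled from \cite{ds1}'' and used as a black box in the equidistribution discussion that follows, so your closing remark is exactly right and there is no internal argument to compare yours against. Your sketch is a reasonable reconstruction of the external Dinh--Sibony argument in broad outline --- existence of $T_+ = \lim (f^n)^*\omega/d_+^n$ from the functional equation for potentials, cohomological normalization via $H^{2p}(\PP^m,\RR)\cong\RR$, support localization from the dynamics near $I_\pm$, and uniqueness by showing $d_+^{-np}(f^n)^*S \to T_+^p$ for any competitor $S$ while the support hypothesis prevents mass from escaping. The one substantive caveat is that in \cite{ds1} the step you flag as the crux --- making rigorous both the universal convergence of $d_+^{-np}(f^n)^*$ and the well-definedness of the wedge powers $T_+^p$ --- is not handled by Lelong-number or bounded-potential arguments alone but by the super-potential calculus that is the main construction of that paper: super-potentials replace the scalar potential $R$ in your $dd^c$-lemma decomposition by a function on a space of test currents, and the contraction of super-potentials under the normalized pull-back is what yields uniqueness. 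So your outline identifies the right obstacles but substitutes a heuristic for the actual machinery; as a citation-level justification for the present paper's use of the theorem, it is adequate.
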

Note also the following corollary of theorem \ref{thm:ds1} from \cite{ds1}.
\begin{corollary} (Dinh-Sibony) 
	\label{cor:ds1} 
		If p = m-1, and $\phi: X \rightarrow \bar{K}_+ \subset \PP^m$, with X a parabolic Riemann surface ($k = 1$), then the image of X is dense in $\bar{K}_+$. In fact, all the closed cluster currents $(S_{1,r}/c_{1,r})$ of Corollary \ref{cor:k1par} coincide with $T_+^{m-1}$.
\end{corollary}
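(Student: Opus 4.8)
The plan is to combine Corollary \ref{cor:k1par} with the uniqueness statement of Theorem \ref{thm:ds1}. First I would observe that since $X$ is a parabolic Riemann surface, Corollary \ref{cor:k1par} applies directly: the normalized currents $S_{1,r}/c_{1,r}$ of bidimension $(1,1)$ have at least one cluster point, and \emph{every} such cluster point is a positive closed current of mass $1$ on $\PP^m$. So the family $\mathcal{C}_1(\phi)$ (Definition \ref{def:spacedclosed}) is nonempty.

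Next I would pin down the support. Each $S_{1,r}$ is, up to the weight $u_r = (1-\tau/r)^+$, an integral of $\phi^*(\psi)$ over $B_r \subset X$, hence is supported on $\overline{\phi(X)}$; passing to weak limits, every cluster current $T \in \mathcal{C}_1(\phi)$ is supported on $\overline{\phi(X)}$. By hypothesis $\phi(X) \subset \bar K_+$, and $\bar K_+$ is closed in $\PP^m$, so $\operatorname{supp} T \subset \bar K_+$. Now invoke Theorem \ref{thm:ds1} in the case $p = m-1$: $T_+^{m-1}$ is the \emph{unique} closed positive current of bidimension $(m-1,m-1) = (p,p)$ and mass $1$ supported on $\bar K_+$. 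Since $T$ is exactly such a current, $T = T_+^{m-1}$. As this holds for every cluster point, $\mathcal{C}_1(\phi) = \{T_+^{m-1}\}$, and in particular $S_{1,r}/c_{1,r} \to T_+^{m-1}$ (not merely along a subsequence), which gives density of $\phi(X)$ in $\bar K_+$ because the support of the limit is all of $\bar K_+$ — indeed $T_+^{m-1}$ has full support on $\bar K_+$ (this last point may need a word, citing \cite{ds1}), so $\overline{\phi(X)} \supset \operatorname{supp} T_+^{m-1} = \bar K_+$.

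The main obstacle I anticipate is the matching of invariants: one must be sure that the bidimension produced by Corollary \ref{cor:k1par} is precisely the bidimension $(p,p)$ for which Theorem \ref{thm:ds1} asserts uniqueness. This forces $p = m-1$, which is exactly the hypothesis of the corollary, so the dimensions align: bidimension $(1,1)$ currents on $Y = \PP^m$ are the same as bidegree $(m-1,m-1)$ currents, matching $(p,p)$ with $p=m-1$. A secondary point to be careful about is that $\phi$ is genuinely non-degenerate as a map of a $1$-dimensional source, so that the construction of $S_{1,r}$ and the normalization $c_{1,r} > 0$ make sense; this is built into the standing hypotheses. Finally, the convergence (as opposed to mere subsequential convergence) of $S_{1,r}/c_{1,r}$ follows formally from the facts that the family is precompact (mass $1$ currents on a compact manifold) and that \emph{all} its cluster points equal the single current $T_+^{m-1}$.
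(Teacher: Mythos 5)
The paper offers no proof of this corollary — it is recalled from \cite{ds1} — so I can only judge your argument on its own terms. Its core (existence of a closed positive cluster current of bidimension $(1,1)$ and mass $1$ from Corollary \ref{cor:k1par}, support contained in $\overline{\phi(X)} \subset \bar{K}_+$, then the uniqueness of Theorem \ref{thm:ds1} to force equality with $T_+^{m-1}$) is exactly the intended route, and your resolution of the bidegree/bidimension bookkeeping is the right reading of Theorem \ref{thm:ds1}, whose statement as printed conflates bidimension with bidegree: $T_+^{m-1}$ has bidegree $(m-1,m-1)$, hence bidimension $(1,1)$, which is what matches $S_{1,r}$.

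Two points are overstated. First, Corollary \ref{cor:k1par} (via Theorem \ref{thm:1stdlimthm} and condition (\ref{eqn:MR1dibddk1})) only produces sequences $r_\ell \to \infty$ with $I_1(r_\ell) \to 0$, and only the cluster points along \emph{such} sequences are guaranteed to be closed; it does not say that every cluster point of the full family $S_{1,r}/c_{1,r}$ is closed. So your concluding step — ``all cluster points equal $T_+^{m-1}$, hence $S_{1,r}/c_{1,r} \to T_+^{m-1}$ without passing to a subsequence'' — is not justified by anything in the paper. Note that the corollary is worded carefully to avoid exactly this: it claims only that all the \emph{closed} cluster currents coincide with $T_+^{m-1}$, which is what your uniqueness argument does establish. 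Second, the density of $\phi(X)$ in $\bar{K}_+$ rests entirely on the identity $\operatorname{supp} T_+^{m-1} = \bar{K}_+$, which you defer in a parenthesis; this is the actual content of the density claim (a priori one only knows $\operatorname{supp} T_+^{m-1} \subset \bar{K}_+$, and e.g. $T_+$ itself, being $dd^c$ of a Green function vanishing on $K_+$, is carried by the boundary), so it must be quoted explicitly from \cite{ds1} rather than treated as a footnote. With the first claim weakened to ``every closed cluster current equals $T_+^{m-1}$'' and the second made an explicit citation, the argument is the correct one.
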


In particular the automorphism $f$ can have an attractive fixed point $z_0 \in \CC^m.$ The domain of attraction $U(z_0)$ is then biholomorpic to $\CC^m$ and is contained in $K_+$. It is called a Fatou-Bieberbach domain. Clearly it is not dense in $\CC^m$. Moreover it follows from the previous results that any positive closed current of bidimension $(1,1)$ constructed as in this paper using images of a parabolic manifold $X$, by any holomorphic map $\phi: X \rightarrow \overline{K}_+ \subset Y = \PP^m$ in any dimension $1 \leq k \leq m$ by taking limit points of the currents $S_{1,r}/S_{1,r}(\omega)$ will be equal to a multiple of $T^p_-$. That is, for all such $\phi, X$, one has $\mathcal{C}_1(\phi) = \{T^p_+\}$ ({\em cf.}, definition \ref{def:spacedclosed}).


\section{Examples: growth conditions}
\label{sec:growth}

We give here some simple examples of the theorems above, compared both to the usual growth conditions of the theory of entire functions. Let us first fix the terminology.
\begin{definition}
	\label{def:finord}
	The map $\phi$ is of exponential growth (or of finite order) if
	$$t_k(r) \lesssim r^d, \;\text{some}\; d, \text{as}\; r \rightarrow +\infty.$$
\end{definition}
\noindent Here we use the unaveraged order function $t_k(r)$ for the $dd^c$ case, 
$$t_j(r) = \int_{B_r} (dd^c \log \sigma)^{k-j} \wedge \phi^*(\omega^j)$$
in the case $j= k$, cf. (\ref{def:ddcMR}) and following. For convenience let us define $\mathcal{H}_k(\phi) = \{dd^c$-closed limit currents of $S_r/c_r\}$.
\begin{theorem}
	\label{thm:egcase}
	Suppose $\phi$ of exponential growth, and
	$$\frac{t_k(r)}{t_{k-1}(r)} \rightarrow \infty,$$	
	as $r \rightarrow +\infty.$ Then $\mathcal{H}_k(\phi)$ is non-empty.
\end{theorem}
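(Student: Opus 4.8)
The plan is to derive this as a corollary of the $dd^c$-machinery of Section \ref{sec:ddc-closed}: I will verify that $\phi,\sigma,\omega$ satisfy condition $dd^c$-MR in dimension $j=k$, i.e. that $\liminf_{r\to\infty}J_k(r)=\liminf_{r\to\infty}\frac{t_{k-1}(r)}{T_k(r)}=0$, and then invoke Theorem \ref{thm:ddc-closed}. Since $Y$ is compact, the mass-$1$ positive currents $S_r/c_r$ have cluster points, and once $dd^c$-MR holds the estimate (\ref{eqn:vitesseddc}) forces (at least) one of them to be $dd^c$-closed, so $\mathcal{H}_k(\phi)\neq\emptyset$.

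First I record the two elementary consequences of the hypotheses that make the estimate work. Because $\phi$ is non-degenerate, $t_k$ is non-decreasing and eventually strictly positive, so $T_k(r)=\int_0^r t_k(s)\,\frac{ds}{s}$ is eventually positive and $\to+\infty$; in particular $\log T_k(r)>0$ for large $r$. Exponential growth $t_k(r)\lesssim r^d$ (we may take $d>0$) integrates to $T_k(r)\le C\,r^d$ for all large $r$, the contribution of a bounded initial interval being $O(1)$; hence
$$\log T_k(r)\le d\,\log r+O(1)\qquad (r\to\infty).$$

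Now I check condition (\ref{eqn:MR2sup}) with $j=k$. Fix $M>0$. Since $t_k(r)/t_{k-1}(r)\to\infty$, there is $s_M\ge r_0$ with $\dfrac{t_k(s)}{s\,t_{k-1}(s)}\ge \dfrac{M}{s}$ for all $s\ge s_M$, so integrating gives
$$\int_{r_0}^r \frac{t_k(s)}{s\,t_{k-1}(s)}\,ds\;\ge\; M\,\log\frac{r}{s_M}\qquad (r\ge s_M).$$
Dividing by $\log T_k(r)\le d\log r+O(1)$ and letting $r\to\infty$ yields
$$\limsup_{r\to\infty}\;\frac{1}{\log T_k(r)}\int_{r_0}^r \frac{t_k(s)}{s\,t_{k-1}(s)}\,ds\;\ge\;\frac{M}{d}.$$
As $M$ is arbitrary the left-hand side is $+\infty$, which is precisely (\ref{eqn:MR2sup}) for $j=k$. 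By Corollary \ref{cor:MR2sup}, $\omega,\phi,\sigma$ satisfy $dd^c$-MR, and Theorem \ref{thm:ddc-closed} produces a positive $dd^c$-closed cluster current of $S_r/c_r$, completing the proof.

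There is essentially no serious obstacle: the statement is a direct consequence of Section \ref{sec:ddc-closed}. The only points needing a word of care are that $T_k(r)\to\infty$ (so the normalization and $\log T_k(r)$ make sense) and that exponential growth of $t_k$ passes to polynomial growth of $T_k$, hence to logarithmic growth of $\log T_k$; both are routine. Equivalently, one can argue directly without Corollary \ref{cor:MR2sup}: writing $J_k(r)=\frac{t_{k-1}(r)}{t_k(r)}\cdot\frac{t_k(r)}{T_k(r)}$ and noting $\frac{t_k(r)}{T_k(r)}=r\,\frac{d}{dr}\log T_k(r)$ by (\ref{eqn:Bprime}), this last quantity cannot tend to $+\infty$ (else $T_k$ would outgrow every power of $r$, contradicting exponential growth), so there is a sequence $r_\ell\to\infty$ along which $\frac{t_k(r_\ell)}{T_k(r_\ell)}$ stays bounded; combined with $\frac{t_{k-1}(r_\ell)}{t_k(r_\ell)}\to0$ this gives $J_k(r_\ell)\to0$, and Theorem \ref{thm:ddc-closed} again applies.
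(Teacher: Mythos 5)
Your proof is correct and follows essentially the same route as the paper: use exponential growth to get $\log T_k(r)\lesssim d\log r$, show that $\int_{r_0}^r \frac{t_k(s)}{s\,t_{k-1}(s)}\,ds$ outgrows any multiple of $\log r$, verify condition (\ref{eqn:MR2sup}), and conclude via Corollary \ref{cor:MR2sup} and Theorem \ref{thm:ddc-closed}. Your added care about $T_k(r)\to\infty$ and the alternative direct estimate of $J_k(r)$ at the end are fine but do not change the substance of the argument.
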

\begin{proof}
Under these hypotheses, $T_k(r) = \int_0^r t_k(s) \frac{ds}{s} \lesssim r^d$, and hence $\log T_k(r) \lesssim d \log r.$ In this case, then, we can say
$$\frac{1}{\log T_k(r)} \int_{r_0}^r \frac{t_k(s)}{t_{k-1}(s)} \frac{ds}{s}  \gtrsim \frac1{d \log r} \int_{r_0}^r \frac{t_k(s)}{t_{k-1}(s)} \frac{ds}{s} \rightarrow +\infty,$$
as $r \rightarrow +\infty$. Taking note of corollary \ref{cor:MR2sup}, this proves the theorem.
\end{proof}

Another example is given by another, slower order of growth. 

\begin{theorem}
	\label{thm:explogp}
	If $t_k(t) \lesssim (\log t)^p,$ and 
	\begin{equation}
		\label{eqn:logorder}
		\frac{t_k(r)}{t_{k-1}(r)} \geq \frac{c}{(\log t)^{\beta}}, \; \beta < 1, 0 < c,
	\end{equation}
	then $\mathcal{H}_k(\phi)$ is non-empty.
\end{theorem}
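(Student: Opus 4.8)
The plan is to mimic exactly the argument used for Theorem \ref{thm:egcase}, replacing the polynomial growth bound on $T_k$ by the corresponding logarithmic one, and checking that the divergence criterion \eqref{eqn:MR2sup} of Corollary \ref{cor:MR2sup} still holds. First I would observe that from the hypothesis $t_k(t) \lesssim (\log t)^p$ one gets, by integrating against $ds/s$,
\begin{equation*}
	T_k(r) = \int_0^r t_k(s)\,\frac{ds}{s} \lesssim \int_{r_0}^r \frac{(\log s)^p}{s}\,ds \lesssim (\log r)^{p+1},
\end{equation*}
so that $\log T_k(r) \lesssim (p+1)\log\log r$ as $r \to \infty$. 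This is the analogue of $\log T_k(r) \lesssim d\log r$ in the exponential-growth case, and it is the only place the growth hypothesis enters.

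Next I would estimate the numerator in \eqref{eqn:MR2sup}. Using \eqref{eqn:logorder}, namely $t_k(s)/t_{k-1}(s) \geq c/(\log s)^{\beta}$ with $\beta < 1$,
\begin{equation*}
	\int_{r_0}^r \frac{t_k(s)}{s\,t_{k-1}(s)}\,ds \;\geq\; c\int_{r_0}^r \frac{ds}{s\,(\log s)^{\beta}} \;=\; \frac{c}{1-\beta}\Big[(\log r)^{1-\beta} - (\log r_0)^{1-\beta}\Big] \;\gtrsim\; (\log r)^{1-\beta},
\end{equation*}
since $\beta < 1$. Dividing by the bound on $\log T_k(r)$ obtained above gives
\begin{equation*}
	\frac{1}{\log T_k(r)}\int_{r_0}^r \frac{t_k(s)}{s\,t_{k-1}(s)}\,ds \;\gtrsim\; \frac{(\log r)^{1-\beta}}{(p+1)\log\log r} \;\xrightarrow[r\to\infty]{}\; +\infty,
\end{equation*}
because any positive power of $\log r$ beats $\log\log r$. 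Hence the limsup in \eqref{eqn:MR2sup} is $+\infty$, and Corollary \ref{cor:MR2sup} shows that $\omega,\phi,\sigma$ satisfy condition $dd^c$-MR; by Theorem \ref{thm:ddc-closed} there is a positive $dd^c$-closed cluster current of $S_r/c_r$, i.e. $\mathcal{H}_k(\phi) \neq \emptyset$.

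The computation is entirely routine; the only point requiring any care is the bookkeeping of the integral $\int ds/(s(\log s)^{\beta})$, which diverges like $(\log r)^{1-\beta}$ precisely because of the assumption $\beta<1$ — this is the exact counterpart of the condition $\int_0^\infty ds/\alpha(s) = \infty$ appearing elsewhere, and it is what makes the numerator grow fast enough to overwhelm the (doubly logarithmic) denominator. I do not anticipate a genuine obstacle here beyond ensuring the constants $r_0, c$ are handled so that all logarithms are positive and bounded away from $0$, which \eqref{eqn:convention} already arranges.
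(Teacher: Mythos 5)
Your proposal is correct and follows essentially the same route as the paper: bound $\log T_k(r) \lesssim (p+1)\log\log r$ from the growth hypothesis, integrate \eqref{eqn:logorder} against $ds/s$ to get a lower bound of order $(\log r)^{1-\beta}$, and conclude via \eqref{eqn:MR2sup} and Corollary \ref{cor:MR2sup}. No gaps.
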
 
\begin{proof}
	Under these hypotheses, we have
	$$\log T_k(r) \lesssim (p+1) \log \log r.$$
	Integrating (\ref{eqn:logorder}) against $\frac{ds}{s}$, we get 
	$$\int_{r_0}^{r} \frac{t_k(s)}{t_{k-1}(s)} \frac{ds}{s} \geq c \int_{r_0}^r \frac1{(\log s)^{\beta}} \frac{ds}{s} \sim \frac{c}{1-\beta} (\log r)^{(1-\beta)} $$
	Diving both sides by $\log T_k(r)$, we see that as $r \rightarrow +\infty$, we get
	$$\lim_{r \rightarrow +\infty} \frac{1}{\log T_k(r)} \int_{r_0}^r  \frac{t_k(s)}{t_{k-1}(s)} \frac{ds}{s} \, \gtrsim  \frac{c}{(1-\beta)(p+1)} \frac{(\log r)^{1-\beta}}{\log \log r} \rightarrow +\infty,$$
	i.e., condition (\ref{eqn:MR2sup}). By corollary \ref{cor:MR2sup}, we conclude $\mathcal{H}_k(\phi)$ is non-empty.
\end{proof}


\end{document}